\setlist[enumerate]{leftmargin=.5in}
\setlist[itemize]{leftmargin=.5in}
\pgfplotsset{compat=1.13}
\crefname{section}{section}{sections}
\crefname{subsection}{subsection}{subsections}
\Crefname{section}{Section}{Sections}
\Crefname{subsection}{Subsection}{Subsections}
\Crefname{figure}{Figure}{Figures}
\newtheorem{theorem}{Theorem}[section]
\newtheorem{corollary}[theorem]{Corollary}
\newtheorem{lemma}[theorem]{Lemma}
\newlist{lemenum}{enumerate}{1} 
\setlist[lemenum]{label=(\roman*), ref=\theproposition(\roman*), font=\rm}
\newlist{propenum}{enumerate}{1} 
\setlist[propenum]{label=(\roman*), ref=\theproposition(\roman*), font=\rm}
\newtheorem{definition}[theorem]{Definition}
\theoremstyle{remark}
\newtheorem{remark}[theorem]{Remark}
\newtheorem{example}[theorem]{Example}
\newlist{assumenum}{enumerate}{1} 
\setlist[assumenum]{leftmargin=2.1cm,label=(A\arabic*),font=\bfseries}
\crefname{assumenumi}{assumption}{assumptions}
\Crefname{assumenumi}{Assumption}{Assumptions}
\DeclareMathOperator*{\argmin}{arg\,min}
\DeclareMathOperator{\dom}{dom}
\DeclareMathOperator{\diam}{diam}
\DeclareMathOperator{\con}{con}
\DeclareMathOperator{\lip}{Lip}
\DeclareMathOperator{\ran}{rge}
\DeclareMathOperator{\ot}{OT}
\DeclareMathOperator{\Div}{Div}
\newcommand{\bR}{\mathbb{R}}
\newcommand{\bE}{\mathbb{E}}
\newcommand{\bN}{\mathbb{N}}
\newcommand{\exR}{\overline{\mathbb{R}}}
\newcommand{\cF}{\mathcal{F}}
\newcommand{\cC}{\mathcal{C}}
\newcommand{\cP}{\mathcal{P}}
\newcommand{\cE}{\mathcal{E}}
\newcommand{\cV}{\mathcal{V}}
\newcommand{\cG}{\mathcal{G}}
\newcommand{\cB}{\mathcal{B}}
\newcommand{\cM}{\mathcal{M}}
\newcommand{\cK}{\mathcal{K}}
\newcommand{\mS}[1]{\mathcal{S}^{#1}}
\newcommand{\mSO}[1]{\mathcal{SO}(#1)}
\newcommand{\mSE}[1]{\mathcal{SE}(#1)}
\newcommand{\gnabla}{\ensuremath{\nabla}_{\!\!g}}
\newcommand{\PE}[1]{\mathbb{R}[#1]}
\newcommand{\PEN}[2]{\mathbb{R}_{#2}[#1]}
\newcommand{\PMN}[1]{\ensuremath{\mathcal{M}^{\dagger}_{n}}}
\newcommand{\PPN}[1]{\ensuremath{\mathcal{P}^{\dagger}_{n}}}
\newcommand*{\tsp}{^{\mkern-1.5mu\mathsf{T}}}
\newcommand*{\mintsp}{^{-\mkern-1.5mu\mathsf{T}}}
\newcommand*{\TM}{\mathrm{T}}
\newcommand{\dd}{\,\mathrm{d}}
\newcommand{\ind}{\iota}
\let\save@mathaccent\mathaccent
\newcommand*\if@single[3]{%
	\setbox0\hbox{${\mathaccent"0362{#1}}^H$}%
	\setbox2\hbox{${\mathaccent"0362{\kern0pt#1}}^H$}%
	\ifdim\ht0=\ht2 #3\else #2\fi
}
\newcommand*\rel@kern[1]{\kern#1\dimexpr\macc@kerna}
\newcommand*\widebar[1]{\@ifnextchar^{{\wide@bar{#1}{0}}}{\wide@bar{#1}{1}}}
\newcommand*\wide@bar[2]{\if@single{#1}{\wide@bar@{#1}{#2}{1}}{\wide@bar@{#1}{#2}{2}}}
\newcommand*\wide@bar@[3]{%
	\begingroup
	\def\mathaccent##1##2{%
		\let\mathaccent\save@mathaccent
		\if#32 \let\macc@nucleus\first@char \fi
		\setbox\z@\hbox{$\macc@style{\macc@nucleus}_{}$}%
		\setbox\tw@\hbox{$\macc@style{\macc@nucleus}{}_{}$}%
		\dimen@\wd\tw@
		\advance\dimen@-\wd\z@
		\divide\dimen@ 3
		\@tempdima\wd\tw@
		\advance\@tempdima-\scriptspace
		\divide\@tempdima 10
		\advance\dimen@-\@tempdima
		\ifdim\dimen@>\z@ \dimen@0pt\fi
		\rel@kern{0.6}\kern-\dimen@
		\if#31
		\overline{\rel@kern{-0.6}\kern\dimen@\macc@nucleus\rel@kern{0.4}\kern\dimen@}%
		\advance\dimen@0.4\dimexpr\macc@kerna
		\let\final@kern#2%
		\ifdim\dimen@<\z@ \let\final@kern1\fi
		\if\final@kern1 \kern-\dimen@\fi
		\else
		\overline{\rel@kern{-0.6}\kern\dimen@#1}%
		\fi
	}%
	\macc@depth\@ne
	\let\math@bgroup\@empty \let\math@egroup\macc@set@skewchar
	\mathsurround\z@ \frozen@everymath{\mathgroup\macc@group\relax}%
	\macc@set@skewchar\relax
	\let\mathaccentV\macc@nested@a
	\if#31
	\macc@nested@a\relax111{#1}%
	\else
	\def\gobble@till@marker##1\endmarker{}%
	\futurelet\first@char\gobble@till@marker#1\endmarker
	\ifcat\noexpand\first@char A\else
	\def\first@char{}%
	\fi
	\macc@nested@a\relax111{\first@char}%
	\fi
	\endgroup
}
\newenvironment{keywords}{\begin{paragraph}{Key words:}
}
{
\end{paragraph}
}
\newenvironment{MSCcodes}{\begin{paragraph}{MSC codes:}
}
{
\end{paragraph}
}
\numberwithin{equation}{section}
\title{Convex relaxations for large-scale graphically structured nonconvex problems with spherical constraints: An optimal transport approach\thanks{This work was supported by: Research Foundation Flanders (FWO) research projects G081222N, G033822N, G0A0920N; Research Council KU Leuven C1 project No. C14/18/068; EU’s Horizon 2020 research and innovation programme under the Marie Skłodowska-Curie grant agreement No. 953348; Fonds de la Recherche Sci- entifique - FNRS and the Fonds Wetenschappelijk Onderzoek - Vlaanderen under EOS project no 30468160 (SeLMA). E. Laude is supported by the postdoctoral mandate PDMt1/22/023.}}
\author{%
	Robin Kenis\thanks{%
		KU Leuven,
		Department of Electrical Engineering (ESAT-STADIUS),
		Kasteelpark Arenberg 10, 3001 Leuven, Belgium~
		{\tt%
			\href{mailto:robin.kenis@esat.kuleuven.be}{\{robin.kenis,}%
			\href{mailto:emanuel.laude@esat.kuleuven.be}{emanuel.laude,%
			\href{mailto:panos.patrinos@esat.kuleuven.be}{panos.patrinos\}}%
			\href{mailto:robin.kenis@esat.kuleuven.be,emanuel.laude@esat.kuleuven.be,panos.patrinos@esat.kuleuven.be}{@esat.kuleuven.be}%
		}%
	}%
	}
	\and Emanuel Laude\footnotemark[2]
	\and Panagiotis Patrinos\footnotemark[2]%
}%
\begin{document}
\maketitle

\begin{abstract}
In this paper we derive a moment relaxation for large-scale nonsmooth optimization problems with graphical structure and spherical constraints.
In contrast to classical moment relaxations for global polynomial optimization that suffer from the curse of dimensionality we exploit the partially separable structure of the optimization problem to reduce the dimensionality of the search space. Leveraging optimal transport and Kantorovich--Rubinstein duality we decouple the problem and derive a tractable dual subspace approximation of the infinite-dimensional problem using spherical harmonics. This allows us to tackle possibly nonpolynomial optimization problems with spherical constraints and geodesic coupling terms.
We show that the duality gap vanishes in the limit by proving that a Lipschitz continuous dual multiplier on a unit sphere can be approximated as closely as desired in terms of a Lipschitz continuous polynomial.
The formulation is applied to sphere-valued imaging problems with total variation regularization and graph-based \emph{simultaneous localization and mapping} (SLAM). In imaging tasks our approach achieves small duality gaps for a moderate degree.
In graph-based SLAM our approach often finds solutions which after refinement with a local method are near the ground truth solution.
\end{abstract}
\begin{keywords}
Polynomial optimization, Manifold optimization, Sum-of-squares, spherical harmonics, Markov random fields
\end{keywords}
\begin{MSCcodes}
65K10, 90C26, 12Y05, 90C22
\end{MSCcodes}

\tableofcontents

\section{Introduction}
\subsection{Motivation}
In this paper we develop a convex relaxation approach for optimization problems with a graphical coupling structure whose variables are constrained to live in a manifold. Given a directed graph $\cG=(\cV, \cE)$ the problem takes the form
\begin{equation*} \label{eq:mrf}
\text{minimize}~ \left\{ F(x) \equiv \sum_{u \in \cV} f_u(x_u) + \sum_{(u,v) \in \cE} f_{(u,v)}(x_u, x_v) : x \in \Omega^\cV \right\}\tag{P},
\end{equation*}
where $(\Omega, d)$ is a compact space with metric $d$, $f_u : \Omega \to \bR$ is lower semi-continuous and the coupling terms $f_{(u,v)} : \Omega \times \Omega \to \bR$ are continuous.
In the context of graphical models and exponential families this amounts to \emph{maximum a posteriori} (MAP) inference in a continuous pairwise \emph{Markov random field} (MRF) \cite{WainwrightJordanGraphicalModels}.
Optimization problems of this form appear in a variety of tasks such as language processing \cite{RabinerJuangFundamentalsSpeechRecognition}, image processing \cite{BlakePushmeetMRFVision}, bioinformatics \cite{DurbinEddyAndersBiologicalSequenceAnalysis} or graph-based \emph{simultaneous localization and mapping} (SLAM) \cite{GrisettiKummerleStachnissBugardGraphSLAM} to name a few.
In image processing \cref{eq:mrf} can be used to formulate denoising or inpainting tasks where $(\cV, \cE)$ represents the pixel grid, $\sum_{u \in \cV} f_u(x_u) $ is a data fidelity term and $\sum_{(u,v) \in \cE} f_{(u,v)}(x_u, x_v)$ is a prior such as the \emph{total variation} (TV) that favors images $x$ that are spatially smooth \cite{RudinOsherFatemiDenoising}.

However, in many situations $\Omega$ is a manifold such as the space of rotation matrices or the unit shere and the coupling terms $f_{(u,v)}(x_u, x_v)=d(x_u,x_v)$ amount to the nonsmooth nonconvex geodesic distance. As a result the optimization problem \cref{eq:mrf} is nonconvex and nonsmooth and therefore local optimization techniques such as subgradient descent can get stuck in a poor local optimum. 
Global optimization techniques, instead, suffer from the curse of dimensionality, since the optimization variable lies in a high-dimensional product space $\Omega^\cV$ (in particular $\cV$ is large, e.g., the number of pixels in the image).

As a tradeoff the problem can be reformulated in terms of a more tractable linear program, the \emph{local marginal polytope relaxation} \cite{WainwrightJordanGraphicalModels}. Despite the fact that the local marginal polytope relaxation in general only yields a lower bound to the so-called global marginal polytope relaxation, which is tight but intractable, its solutions are often near globally optimal in practice. 
When $\Omega$ is finite and thus \cref{eq:mrf} is discrete there exists an abundance of efficient methods that solve the local marginal polytope relaxation such as message passing and dual coordinate ascent \cite{KolmogorovConvergentMessagePassing, KomodakisParagiosTziritasMRFOptimization}.
However, for continuous $\Omega$, as considered in this work, the local marginal polytope relaxation amounts to an infinite-dimensional linear program and is thus still intractable.
As a remedy we consider a hierarchy of implementable dual programs obtained by polynomial subspace discretization and show that the duality gap between the dual approximation and the infinite-dimensional local marginal polytope relaxation vanishes if the degrees goes to infinity.
Our approach is related to optimization of (sparse) polynomials \cite{lasserre2006convergent}. However, in contrast to \cite{lasserre2006convergent} our cost functions $f_u$ and $f_{(u,v)}$ are in general not polynomial. In particular we consider geodesic pairwise couplings $f_{(u,v)}=d$ that are not even smooth.

\subsection{Related work}
\paragraph{Polynomial optimization} Semi-definite programming relaxations for polynomial programming have attracted a lot of attention \cite{LasserreGlobalOptimizationMoments, HenrionLasserreBook, parrilo2003semidefinite, BlekhermanParrilloBook}. The idea is to lift nonconvex polynomial optimization problems to equivalent problems into the measure space where the problem is infinite-dimensional, but crucially a linear optimization problem. The infinite-dimensional problem is then approximated by a finite dimensional one by bounding the degree of polynomials to be maximized over, and representing the measures as truncated moment sequences. These relaxations promise convex methods to solve nonconvex polynomial optimization problems, which are in general NP-hard. The backbone of these methods are easily computable nonnegativity certificates such as Putinar's \cite{PutinarPositivstellensatz} or Krivine--Stengele Positivstellensätze \cite{KrivinePositivstellensatz, StenglePositivstellensatz}, or sum-of-squares certificates \cite{LasserreKimChuanBSOS}. The most notable drawback of these very powerful, general approaches is that they suffer greatly from the curse of dimensionality. Approaches to reduce this problem have
been studied from the viewpoint of exploiting structured sparsity \cite{WakiKimSOSStructuredSparsity, WeisserLasserreSparseBSOS}.
\paragraph{Markov Random Fields} The class of weakly coupled problems considered in this work can be interpreted in terms of MAP inference for a continuous, higher order MRF \cite{WainwrightJordanGraphicalModels}, an approach which provides a powerful model of many imaging problems. This class of problems is often studied via a lifting procedure known as the local marginal polytope relaxation, which reformulates the problem as a \emph{linear program} (LP). For MRF problems with continuous state spaces, this LP-relaxation is infinite dimensional, which poses major challenges. The local marginal polytope relaxation is closely related to lifting for spatially continuous variational problems \cite{PockGlobalSolutions}. In \cite{MollenhoffSublabelAccurate} a piecewise linear dual subspace approximation for nonconvex variational problems is considered which can be viewed as a \emph{sublabel-accurate} lifting in the primal space. In \cite{FixAgarwalContinuousGraphicalModel} a piecewise polynomial dual space approximation scheme is proposed as a relaxation of the infinite dimensional problem. The framework studied in \cite{BauermeisterLaudeMollenhoffLiftingLagrangian} builds upon the analysis in \cite{FixAgarwalContinuousGraphicalModel}, even providing a tractable hierarchy for solving MAP-MRF problems using dual space approximation with piecewise polynomial multipliers, albeit restricted to the unit interval. In particular, a refinement for metric coupling terms is proposed exploiting Kantorowich--Rubinstein duality. The goal of the current paper is to continue in this direction by extending the work in \cite{BauermeisterLaudeMollenhoffLiftingLagrangian} to more general metric spaces, with particular focus on the unit sphere.

\paragraph{Manifolds} In order to minimize geodesic coupling terms, we will rephrase the Wasserstein distance with geodesic cost as a maximization problem and approximate its value with a semi-definite program. In \cite{CondatGeodesicDistance} the problem of computing the Wasserstein distance is also rewritten as a semi-definite program, particularly for the case of the circle $\mS{1}$, for which the optimal transport plan is known up to an angle $\alpha$. In \cite{LellmannLiftingMethods} a finite element approach is proposed to solve continuous manifold-valued MRF-problems with more general coupling terms.

\subsection{Summary and contributions}
In \cref{sec:duality}, we revisit the infinite-dimensional local marginal polytope relaxation of \cref{eq:mrf} which is based on marginalization and optimal transport.
As such it is an optimization problem formulated in the space of measures. Leveraging optimal transport duality we consider two equivalent dual formulations which eventually lead to different implementations. The first one is more general but can only be implemented for polynomial coupling terms. The second one leverages Kantorovich--Rubinstein duality to derive a tailored dual formulation for metric and in particular geodesic couplings. This allows us to provide an efficient implementation using a characterisation of Lipschitz continuity of polynomials on manifolds in terms of nonnegativity of polynomials.

In \cref{sec:polynomialduals} we consider hierarchies of dual programs obtained by polynomial subspace discretization and study its convergence properties. Under continuity of the coupling part we show that the duality gap between the general dual discretization and the infinite-dimensional primal relaxation vanishes if the degree $n$ of the polynomial goes to infinity. If, in addition, the underlying graph is a tree we show that the duality gap between the general dual discretization and the original nonconvex problem \cref{eq:mrf} vanishes in the limit. More specifically, for $\Omega$ being a unit sphere we provide explicit convergence rates for the the duality gaps between the dual discretization and the infinite-dimensional relaxation: for the general dual discretization we show that the duality gap vanishes with rate $\mathcal{O}(1/n)$. For the dual formulation tailored to geodesic coupling terms we similarly obtain the rate $\mathcal{O}(1/n)$.

In \cref{sec:semidefinite} we cast the semi-infinite polynomial dual discretization in terms of a finite semi-definite program using nonnegativity of polynomials and \emph{sum-of-squares} (SOS). In particular we describe how the Lipschitz constraint on the multiplier obtained via optimal transport duality can be implemented in terms of linear matrix inequalities.

In \cref{sec:experiments} we present extensive numerical tests considering manifold-valued image processing tasks and graph-based SLAM. In image processing we show that our approach achieves small duality gaps for a moderate degree of the polynomial dual variables. In graph-based SLAM we show that after rounding our approach yields solutions which are close to the ground truth solution while it reduces the memory footprint when compared to classical moment relaxations for sparse polynomial optimization.


\subsection{Notation}
For a compact nonempty set $X \subset \bR^m$, denote by $\cP(X)$ the space of Borel probability measures on $X$ and by $\cM(X)$ the space of Radon measures on $X$. The convex cone of nonnegative Radon measures is denoted by $\cM_+(X)$. Given measurable spaces $(X_1, \Sigma_1)$ and $(X_2, \Sigma_2)$, a nonnegative measure $\mu : \Sigma_1 \to [0,+\infty]$ and a measurable mapping $T:X_1 \to X_2$, $T \sharp \mu:\Sigma_2 \to [0,+\infty]$ is the pushforward of $\mu$ under $T$ defined by: $(T \sharp \mu)(A) = \mu(T^{-1}(A))$ for all $A\in \Sigma_2$.
Let $\delta_x$ denote the Dirac measure centered at $x \in X$. 
Furthermore let $\cC(X)$ be the space of continuous functions on $X$. We write lsc for lower semicontinuous. We will write $\langle \mu, f \rangle=\int_{X} f(x) \dd \mu(x)$, for $f:X \to \bR$, and $f$ lsc, and $\mu \in \cM(X)$. If $d$ is a metric on $X$, we denote this metric space by $(X, d)$. We define the extended real line $\exR = \bR \cup \{+ \infty\}$. We say $f$ admits the modulus of continuity $\omega$ if $|f(y) - f(x)| \leq \omega(d(y, x))$ holds for the increasing extended real-valued function $\omega: [0, +\infty] \rightarrow [0, +\infty]$, with $\omega$ continuous at $0$, and $\omega(0) = 0$. When $\omega$ equals the linear function $x \mapsto L x$, we obtain $|f(y) - f(x)| \leq L d(y, x)$ and write that $f$ is Lipschitz continuous with $\lip f = L$.
For an extended real-valued function $f:\bR^m \to \exR$ let $f^*(y)=\sup_{x \in \bR^m} \langle x, y\rangle -f(x)$ denote the Fenchel conjugate of $f$ at $y$ and $f^{**} := (f^*)^*$ the Fenchel biconjugate. For a set $C\subset \bR^m$ we denote by $\ind_C:\bR^m \to \exR$ the indicator function with $\ind_C(x) =0$ if $x \in C$ and $\ind_C(x) =\infty$ if $x\notin C$ and by $\sigma_C(x) = \sup_{y\in C} \langle x,y \rangle$ the support function of $C$ at $x \in \bR^m$. These notions are defined analogously for the topologically paired spaces $\cM(X)$ and $\cC(X)$.
We denote by $\PE{x}$ the ring of polynomials and by $\PEN{x}{n} \subset \PE{x}$ the finite-dimensional subspace of polynomials with maximum degree $n$.

\section{Local marginal polytope relaxation and optimal transport duality} \label{sec:duality}
\subsection{Local vs. global marginal polytope}
It is a well-known fact that any nonconvex problem of the form \cref{eq:mrf} can be cast as a convex (even linear) problem considering a reformulation over the space of probability measures $\cP(\Omega^\cV)$ on the product space $\Omega^\cV$:
\begin{equation} \label{eq:global_marginal_polytope}
\min_{x \in \Omega^\cV} ~F(x) = \min_{\mu \in \cP(\Omega^\cV)} ~\langle \mu, F \rangle.
\end{equation}
In the context of Markov random fields and linear programming this is equivalent to the so-called global marginal polytope relaxation.
However, for large $\cV$ the relaxation suffers from a curse of dimensionality even when $\dim \Omega$ is moderate. 

As a remedy, we consider the so-called local marginal polytope relaxation which lifts the optimization variable $x \in \Omega^\cV$ to a product space of probability measures $\mu \in \cP(\Omega)^\cV$ rather than the space of measures on the product space $\mu \in \cP(\Omega^\cV)$ as in \cref{eq:global_marginal_polytope}
\begin{equation} \label{eq:local_marginal_polytope}
\inf_{\mu \in \cP(\Omega)^\cV} \left\{\mathcal{F}(\mu) := \sum_{u \in \cV} \langle \mu_u, f_u \rangle + \sum_{(u,v) \in \cE} \ot_{f_{(u,v)}}(\mu_u,\mu_v) \right\}.
\tag{R-P}
\end{equation}
Here, $\ot$ is the optimal transportation \cite{Kantorovich1960} with marginals $(\mu_u, \mu_v)$ and cost $f_{(u,v)}$ defined by
\begin{equation}	
\label{eq:ot}
\ot_{f_{(u,v)}}(\mu_u, \mu_v) = \inf_{\mu_{(u,v)} \in \Pi(\mu_u, \mu_v)} \langle \mu_{(u,v)}, f_{(u,v)}\rangle.
\end{equation}
The constraint set $\Pi(\mu_u, \mu_v)$ is the set of all Borel probability measures on $\Omega^2$ with prescribed marginals $\mu_u$ and $\mu_v$: 
\begin{equation} \label{eq:marginalization_constraints}
\Pi(\mu_{u}, \mu_v)=\left\{ \mu_{(u,v)} \in \cP(\Omega^2) : \pi_u\sharp \mu_{(u,v)} = \mu_u, \; \pi_v\sharp \mu_{(u,v)} = \mu_v \right\},
\end{equation}
where $\pi_u : \Omega \times \Omega \to \Omega$ corresponds to the canonical projection onto the $u\textsuperscript{th}$ component.

Since $\Pi(\delta_x, \delta_{x'}) = \{ \delta_{(x, x')} \}$ restricting $\mu_u$ (and therefore $\mu_{(u,v)}$) to be Dirac probability measures, the formulation \cref{eq:local_marginal_polytope} is equivalent to the original problem \cref{eq:mrf}. This shows that when optimizing over the larger set of all probability measures we obtain a lower bound to \cref{eq:global_marginal_polytope}, i.e., we have the following important relations:
\begin{equation} \label{eq:relaxation_ordering}
\cref{eq:local_marginal_polytope} \leq \cref{eq:global_marginal_polytope} = \cref{eq:mrf}.
\end{equation}
As is known for the discrete case \cite{WainwrightJordanGraphicalModels}, the infinite-dimensional local marginal polytope relaxation is tight if the coupling graph $(\cV, \cE)$ is a tree. The following theorem is a specialization of \cite[Equation 3.5]{lasserre2006convergent} noting that trees satisfy the \emph{running intersection property} (RIP) \cite[Equation 1.3]{lasserre2006convergent}:
\begin{theorem} \label{thm:thightness_tree}
Let $(\Omega, d)$ be a metric space and assume that $(\cV, \cE)$ is a tree. Then we have that $\cref{eq:local_marginal_polytope} = \cref{eq:global_marginal_polytope}=\cref{eq:mrf}$.
\end{theorem}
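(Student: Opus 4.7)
The plan is to close the inequality chain \cref{eq:relaxation_ordering} by showing that, when $(\cV,\cE)$ is a tree, the local marginal polytope bound is actually attained. Since $\cref{eq:global_marginal_polytope} = \cref{eq:mrf}$ is already recorded in \cref{eq:relaxation_ordering}, it suffices to prove $\cref{eq:local_marginal_polytope} \geq \cref{eq:global_marginal_polytope}$. The strategy is to take an arbitrary feasible collection $\mu = (\mu_u)_{u \in \cV} \in \cP(\Omega)^\cV$ for \cref{eq:local_marginal_polytope} and glue it, together with the associated optimal transport plans, into a single joint probability measure $\bar\mu \in \cP(\Omega^\cV)$ whose value in \cref{eq:global_marginal_polytope} equals $\cF(\mu)$.

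First I would invoke compactness of $\Omega$ and continuity of each $f_{(u,v)}$ to obtain an optimal plan $\mu_{(u,v)}^* \in \Pi(\mu_u,\mu_v)$ realizing $\ot_{f_{(u,v)}}(\mu_u,\mu_v)$ on every edge. Since $\Omega$ is a compact metric space, hence Polish, disintegration is available: after rooting the tree at an arbitrary $r \in \cV$ and letting $p(v)$ denote the parent of $v$, each edge plan can be written as
\[
\mu_{(p(v),v)}^*(\dd x_{p(v)}, \dd x_v) = \mu_{p(v)}(\dd x_{p(v)})\, K_v(\dd x_v \mid x_{p(v)})
\]
for a Markov kernel $K_v$. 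The candidate joint measure is then built by propagating from the root along the tree,
\[
\bar\mu(\dd x) = \mu_r(\dd x_r) \prod_{v \in \cV \setminus \{r\}} K_v(\dd x_v \mid x_{p(v)}).
\]

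The key step is to verify by induction from the leaves towards the root that $\pi_u \sharp \bar\mu = \mu_u$ for every $u \in \cV$ and $(\pi_u,\pi_v)\sharp \bar\mu = \mu_{(u,v)}^*$ for every $(u,v) \in \cE$; here the absence of cycles ensures that, in each identity, all kernels outside the ancestor path of the targeted edge integrate to $1$, leaving exactly $\mu_{(u,v)}^*$. Once this is established, linearity yields $\langle \bar\mu, F\rangle = \sum_{u \in \cV} \langle \mu_u, f_u\rangle + \sum_{(u,v) \in \cE} \langle \mu_{(u,v)}^*, f_{(u,v)}\rangle = \cF(\mu)$, whence $\cref{eq:global_marginal_polytope} \leq \cref{eq:local_marginal_polytope}$. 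The main obstacle is precisely this marginal-matching step: it genuinely requires the tree structure, because for a graph with cycles two distinct paths between the same pair of vertices would impose competing consistency constraints that the product of kernels along any fixed spanning tree cannot simultaneously satisfy. This is the graphical reflection of the RIP invoked in \cite{lasserre2006convergent}, and explains why the equality cannot be extended beyond trees in general.
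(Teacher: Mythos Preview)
Your proposal is correct and follows the same strategy as the paper: lift a feasible point of \cref{eq:local_marginal_polytope} (together with its edge plans) to a global measure in $\cP(\Omega^\cV)$ with matching vertex and edge marginals, so that its cost in \cref{eq:global_marginal_polytope} coincides with $\cF(\mu)$. The only difference is that you carry out the gluing explicitly via disintegration and a rooted product of Markov kernels, whereas the paper outsources this step to \cite[Lemma~6.4]{lasserre2006convergent} and works with a minimizing sequence rather than a fixed feasible $\mu$, which sidesteps the (here harmless) appeal to existence of optimal transport plans.
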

\begin{proof}
Denote a minimizing sequence for \cref{eq:local_marginal_polytope} by $(\mu^{k}_{\cV}, \mu^{k}_{\cE}) \in \cP(\Omega)^{\cV} \times \cP(\Omega \times \Omega)^{\cE}$. Since the (RIP) holds for a tree, by \cite[Lemma 6.4]{lasserre2006convergent} there exists for any $k \in \bN$ a measure $\mu^{k} \in \cP(\Omega^{\cV})$ such that
\begin{equation*}
\pi_{u} \sharp \mu^{k} = \mu^{k}_{\cV, u} \quad \textrm{and} \quad \pi_{(u,v)} \sharp \mu^{k} = \mu^{k}_{\cE, (u,v)}.
\end{equation*}
Here $\pi_{u}: \Omega^{\cV} \to \Omega$ denotes the canonical projection onto the $u\textsuperscript{th}$ component, and $\pi_{(u,v)}: \Omega^{\cV} \to \Omega^{2}$ denotes the canonical projection onto the $u\textsuperscript{th}$ and $v\textsuperscript{th}$ components. Then $\mu^{k}$ attains the same cost for \cref{eq:global_marginal_polytope} that $\mu_{\cV}^{k}$ and $\mu_{\cE}^{k}$ attain for $\cref{eq:local_marginal_polytope}$:
\begin{align*}
\langle \mu^{k}, F \rangle &= \langle \mu^{k}, \sum_{u \in \cV} f_{u} \circ \pi_{u} + \sum_{(u,v) \in \cE} f_{(u,v)} \circ \pi_{(u,v)} \rangle \\
&= \sum_{u \in \cV} \langle \pi_{u} \sharp \mu^{k}, f_{u} \rangle + \sum_{(u,v) \in \cE} \langle \pi_{(u,v)} \sharp \mu^{k}, f_{(u,v)} \rangle \\
&= \sum_{u \in \cV} \langle \mu_{\cV, u}^{k}, f_{u} \rangle + \sum_{(u,v) \in \cE} \langle \mu_{\cE, (u,v)}^{k}, f_{(u,v)} \rangle.
\end{align*}
Together with \cref{eq:relaxation_ordering} this implies that $\lim_{k \rightarrow \infty} ~\langle \mu^{k}, F \rangle = \inf_{\mu \in \cP(\Omega^\cV)} ~\langle \mu, F \rangle$, and $\cref{eq:global_marginal_polytope} = \cref{eq:local_marginal_polytope}$.
\end{proof}
More specifically, if $\Omega=[a,b]$ is an interval and the pairwise terms are submodular \cite{BachSubmodular} the local marginal polytope relaxation is tight even for general coupling graphs $(\cV, \cE)$; see \cite[Proposition 2.2]{BauermeisterLaudeMollenhoffLiftingLagrangian}. There is no such result that covers geodesic coupling terms on general manifolds, a counterexample is given in \cref{ex:counterexample_LMP}.\\
\\
The local marginal polytope relaxation represents the lowest stage in a hierarchy of tighter relaxations known as the Sherali--Adams hierarchy \cite{SheraliAdamsHierarchyRelaxations, WainwrightJordanGraphicalModels} in discrete optimization. 
However, higher stages in the hierarchy come at the expense of reduced tractability. Therefore, in this work we always resort to the local marginal polytope relaxation despite a lack of theoretical tightness guarantees for general graphs. This is motivated by the small duality gaps and the near optimality of the rounded solutions observed in our practical evaluations.
\subsection{Duality}
Leveraging optimal transport duality, in this section, we derive two equivalent dual problems of \cref{eq:local_marginal_polytope}: The first one is more general but can only be implemented for polynomial couplings. The second one is tailored towards metric and in particular geodesic coupling terms and is based on Kantorovich--Rubinstein duality. Leveraging equivalent characterizations of Lipschitz continuity on manifolds this allows us to obtain a more tractable formulation in the metric case.

Thanks to \cite[Theorem 1.42]{santambrogio2015optimal} since $f_{(u,v)}$ is lsc and $\Omega$ compact the optimal transportation $\ot_{f_{(u,v)}}(\mu_u,\mu_v)$ with cost $f_{(u,v)}$ between the marginals $\mu_u$ and $\mu_v$ admits a simple dual formulation which amounts to a supremum over continuous functions $(\varphi, \psi) \in \cC(\Omega)$
\begin{equation} \label{eq:ot_dual}
\ot_{f_{(u,v)}}(\mu_u,\mu_v) = \sup_{(\varphi, \psi) \in \cK_{(u,v)}} \langle \mu_u, \varphi \rangle + \langle \mu_v, \psi \rangle,
\end{equation}
that satisfy the constraint
\begin{equation*}
\cK_{(u,v)} =\left\{ (\varphi, \psi) \in \cC(\Omega) : \varphi(x) +\psi(y) \leq f_{(u,v)}(x,y) \quad \forall x,y \in \Omega \right\}.
\end{equation*}
Replacing $\ot_{f_{(u,v)}}$ in \cref{eq:local_marginal_polytope} with its dual formulation \cref{eq:ot_dual} an interchange of $\inf$ and $\sup$ results in
\begin{equation} \label{eq:local_marginal_polytope_dual}
\sup_{(\varphi, \psi) \in (\cC(\Omega)^\cE)^2} ~-\sum_{u \in \cV} \sigma_{\cP(\Omega)}(A_u(\varphi, \psi)-f_u) - \sum_{e \in \cE} \iota_{\cK_{e}}(\varphi_e, \psi_e), \tag{R-D}
\end{equation}
where $A : (\cC(\Omega)^\cE)^2 \to \cC(\Omega)^\cV$ is a linear mapping defined by
\begin{equation} \label{eq:definition_linear_map_A}
A_u(\varphi, \psi) = - \sum_{v : (u,v) \in \cE} \varphi_{(u,v)} - \sum_{v : (v,u) \in \cE} \psi_{(v, u)}.
\end{equation}
Alternatively, the formulation is obtained by dualizing the marginalization constraints \cref{eq:marginalization_constraints} for each edge $(u,v) \in \cE$ with Lagrange multipliers $\varphi_{(u,v)}$ and $\psi_{(u,v)}$.
Thanks to weak duality we have the relation
\begin{equation*}
\cref{eq:local_marginal_polytope_dual} \leq \cref{eq:local_marginal_polytope}.
\end{equation*}
Invoking the Fenchel--Rockafellar duality theorem in infinite dimensions \cite{rockafellar1967duality} and optimal transport duality we show that strong duality holds and in particular $\cref{eq:local_marginal_polytope} = \cref{eq:local_marginal_polytope_dual}$:
\begin{theorem} \label{thm:duality_general}
Let $(\Omega, d)$ be a compact metric space and assume that $f_{e} : \Omega \times \Omega \to \bR$ and $f_u : \Omega \to \bR$ are lsc for all $e \in \cE$ and $u \in \cV$. Then $\cref{eq:local_marginal_polytope} = \cref{eq:local_marginal_polytope_dual}$ and a minimizer of \cref{eq:local_marginal_polytope} exists.
\end{theorem}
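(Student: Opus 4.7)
The plan is to combine Kantorovich duality applied edge-by-edge with a minimax interchange. First I would establish existence of a primal minimizer via weak-$*$ compactness. Then I would replace each $\ot_{f_{(u,v)}}(\mu_u, \mu_v)$ by the supremum in \cref{eq:ot_dual}, swap the order of $\inf$ and $\sup$ via Sion's minimax theorem (the Fenchel--Rockafellar route mentioned in the paper works equally well), and finally evaluate the inner infimum to recover \cref{eq:local_marginal_polytope_dual}.

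\emph{Existence.} By Banach--Alaoglu, $\cP(\Omega)$ is weak-$*$ compact because $\Omega$ is compact, so the product $\cP(\Omega)^\cV$ is weak-$*$ compact in the product topology. Each map $\mu_u \mapsto \langle \mu_u, f_u \rangle$ is weak-$*$ lsc since $f_u$ is lsc and bounded below on compact $\Omega$. The optimal transport functional $(\mu_u,\mu_v) \mapsto \ot_{f_{(u,v)}}(\mu_u, \mu_v)$ is jointly weak-$*$ lsc in its marginals, being a supremum of weak-$*$ continuous linear functionals indexed by $(\varphi_e, \psi_e) \in \cK_e$ in \cref{eq:ot_dual}. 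Hence $\mathcal{F}$ is weak-$*$ lsc on the compact set $\cP(\Omega)^\cV$ and attains its infimum.

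\emph{Saddle-point reformulation and swap.} Substituting \cref{eq:ot_dual} into \cref{eq:local_marginal_polytope} yields
\begin{equation*}
\cref{eq:local_marginal_polytope} = \inf_{\mu \in \cP(\Omega)^\cV} \sup_{(\varphi, \psi) \in \cK} \Phi(\mu, \varphi, \psi),
\end{equation*}
where $\cK := \prod_{e \in \cE} \cK_e$ and
\begin{equation*}
\Phi(\mu, \varphi, \psi) = \sum_{u \in \cV} \langle \mu_u, f_u \rangle + \sum_{(u,v) \in \cE} \bigl( \langle \mu_u, \varphi_{(u,v)} \rangle + \langle \mu_v, \psi_{(u,v)} \rangle \bigr)
\end{equation*}
is weak-$*$ continuous and affine in $\mu$ as well as affine in $(\varphi, \psi)$. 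Since $\cP(\Omega)^\cV$ is convex and weak-$*$ compact while $\cK$ is convex, Sion's minimax theorem gives
\begin{equation*}
\inf_{\mu} \sup_{(\varphi, \psi) \in \cK} \Phi(\mu, \varphi, \psi) = \sup_{(\varphi, \psi) \in \cK} \inf_{\mu} \Phi(\mu, \varphi, \psi).
\end{equation*}
Grouping the inner infimum by vertex and using the identity $\inf_{\mu \in \cP(\Omega)} \langle \mu, h \rangle = -\sigma_{\cP(\Omega)}(-h)$ together with the definition \cref{eq:definition_linear_map_A} of $A_u$ produces exactly the objective of \cref{eq:local_marginal_polytope_dual}.

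\emph{Main obstacle.} The delicate step is the $\inf$--$\sup$ exchange: the dual set $\cK$ is convex but unbounded, so compactness must come from the primal side. Sion's theorem handles this asymmetry because $\cP(\Omega)^\cV$ is weak-$*$ compact and $\Phi$ is jointly affine and continuous in $\mu$. Equivalently, one may invoke Fenchel--Rockafellar duality on $\cM(\Omega)^\cV$ by splitting the primal into a continuous linear vertex term plus the everywhere-finite, weak-$*$ continuous optimal transport term; the qualification condition is then verified at, for example, the uniform measure on $\Omega^\cV$, where both summands are finite and one is continuous.
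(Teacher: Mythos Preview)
Your proof is correct and takes a genuinely different route from the paper. The paper casts \cref{eq:local_marginal_polytope_dual} itself as a Fenchel--Rockafellar primal $\inf_{(\varphi,\psi)} F(A(\varphi,\psi)) + G(\varphi,\psi)$ on $\cC(\Omega)^\cE \times \cC(\Omega)^\cE$, computes the conjugates $F^*$ and $G^*$, and shows (again via Kantorovich duality) that the resulting Fenchel dual coincides with \cref{eq:local_marginal_polytope}; the qualification condition is checked by exhibiting the feasible point $(\varphi,\psi)\equiv 0$, at which $F$ is sup-norm continuous, and both strong duality and existence of the \cref{eq:local_marginal_polytope}-minimizer then drop out of Rockafellar's theorem in one stroke. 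You instead establish existence separately by the direct method and obtain the equality by inserting \cref{eq:ot_dual} into \cref{eq:local_marginal_polytope} and applying Sion's minimax theorem with compactness on the measure side. Your approach is more modular and bypasses the conjugate calculus entirely; the paper's approach packages existence and duality into a single invocation. One minor imprecision: you call $\Phi$ weak-$*$ \emph{continuous} in $\mu$, but since $f_u$ is only lsc the map $\mu_u \mapsto \langle \mu_u, f_u\rangle$ is merely weak-$*$ lsc. That is precisely what Sion needs on the minimizing side, so the argument is unaffected.
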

The proof follows by standard duality arguments and is deferred to \cref{secsup:duality_general_proof}.\\
\\
If, in addition, $f_e$ is continuous, following the proof of \cite[Proposition 1.11]{santambrogio2015optimal}, we also have existence of dual solutions:
To this end we introduce the so-called $c$-transform
\begin{definition}[$c$-transfrom and $c$-concavity]
Let $f:\Omega \to \exR$ and $c:\Omega \times \Omega \to \bR$. Then we define the $c$-transform and $\bar c$-conjugate of $f$ as
\begin{equation*}
f^c(y) := \inf_{x \in \Omega}~c(x,y) - f(x), \quad f^{\bar c}(x) := \inf_{y \in \Omega}~c(x,y) - f(y).
\end{equation*}
We say that $f \in c-\mathrm{conc}(\Omega)$ is $c$-concave if there exists some $g:\Omega \to \exR$ such that $f(x)= \inf_{y \in \Omega}~c(x,y) - g(y)$ and we say that $f \in \bar c-\mathrm{conc}(\Omega)$ is $\bar c$-concave if there is some $g:\Omega \to \exR$ such that $f(y)= \inf_{x \in \Omega}~c(x,y) - g(x)$.

If $c(x,y)=c(y,x)$, the $\bar c$-conjugate and $c$-conjugate of $f$ are identical.
\end{definition}

\begin{theorem} \label{thm:duality_existence}
Let $(\Omega, d)$ be a compact metric space and assume that for all $e \in \cE$ the pairwise term $f_{e} : \Omega \times \Omega \to \bR$ is continuous with some modulus $\omega_e$ and for all $u \in \cV$ the unary term $f_u : \Omega \to \bR$ is lsc. Then there exists a maximizer $(\varphi,\psi) \in \cC(\Omega)^\cE \times \cC(\Omega)^\cE$ of \cref{eq:local_marginal_polytope_dual} with the property $\varphi_{e} \in f_e-\mathrm{conc}(\Omega)$ and $\psi_{e}=(\varphi_{e})^{f_{e}}$ and thus $\varphi_e,\psi_e$ inherit the modulus of continuity $\omega_{e}$ from $f_{e}$.
\end{theorem}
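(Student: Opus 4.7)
The plan is a standard Santambrogio-style argument combining a double $c$-transform with the Arzel\`a--Ascoli theorem, tailored to the multi-edge dual \cref{eq:local_marginal_polytope_dual}. First I would take a maximizing sequence $(\varphi^k, \psi^k) \in (\cC(\Omega)^\cE)^2$ and, for every edge $e \in \cE$, successively replace $\psi^k_e$ by its $f_e$-transform $(\varphi^k_e)^{f_e}$ and then $\varphi^k_e$ by $(\psi^k_e)^{\bar f_e}$. The key observation is that these transforms are pointwise the largest functions still satisfying the $\cK_e$ constraint, so the replacement can only increase $\psi^k_e$ and $\varphi^k_e$; since $\sigma_{\cP(\Omega)}(h - f_u) = \sup_\Omega(h - f_u)$ is monotone in $h$ and $A_u$ carries $\varphi_{(u,v)}$ and $\psi_{(v,u)}$ with a minus sign, the dual objective can only grow. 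After this step each $\varphi^k_e$ is $f_e$-concave, $\psi^k_e = (\varphi^k_e)^{f_e}$, and the infimum representation together with the assumed modulus $\omega_e$ of $f_e$ forces both to inherit $\omega_e$.

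Next I would exploit the edgewise invariance $(\varphi_e, \psi_e) \mapsto (\varphi_e + c_e, \psi_e - c_e)$, which preserves $\cK_e$ and, because $\sigma_{\cP(\Omega)}$ is translation-equivariant while the $c_e$-shifts in $A_u$ and $A_v$ defined by \cref{eq:definition_linear_map_A} enter with opposite signs and thus cancel in the sum, leaves the objective unchanged. Choosing $c_e$ so that $\min_\Omega \varphi^k_e = 0$, the inherited modulus gives $0 \leq \varphi^k_e \leq \omega_e(\diam \Omega)$ uniformly in $k$, and continuity of $f_e$ on the compact set $\Omega \times \Omega$ yields a corresponding uniform bound on $\psi^k_e = (\varphi^k_e)^{f_e}$. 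With the sequence now uniformly bounded and equicontinuous on the compact metric space $\Omega$, Arzel\`a--Ascoli produces a subsequence converging uniformly to some $(\varphi^\star, \psi^\star) \in \cC(\Omega)^\cE \times \cC(\Omega)^\cE$, still admitting modulus $\omega_e$ and still satisfying the closed constraint $\cK_e$ in the limit.

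Finally I would pass to the limit in the objective: each $\sigma_{\cP(\Omega)}(A_u(\varphi,\psi) - f_u) = \sup_\Omega (A_u(\varphi,\psi) - f_u)$ is well defined and finite since $-f_u$ is upper semi-continuous on the compact set $\Omega$, and it is $1$-Lipschitz in the uniform norm of its continuous argument $A_u(\varphi,\psi)$, so the dual functional is continuous under uniform convergence of $(\varphi,\psi)$. A concluding double $c$-transform applied to $(\varphi^\star, \psi^\star)$, which can once more only improve the objective and preserves the modulus $\omega_e$, produces a maximizer of the claimed form. I expect the subtlest point to be verifying that the edgewise normalization genuinely leaves the full dual objective unchanged across all edges simultaneously, since each $\varphi_e$ feeds into one $A_u$-term while its mate $\psi_e$ feeds into a different $A_v$-term; this works out cleanly thanks to the opposite signs in \cref{eq:definition_linear_map_A} together with the translation-equivariance of $\sigma_{\cP(\Omega)}$.
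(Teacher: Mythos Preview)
Your proposal is correct and follows essentially the same route as the paper: take a maximizing sequence, replace each $(\varphi^k_e,\psi^k_e)$ by its double $c$-transform to gain the modulus $\omega_e$ without decreasing the objective, normalize by the edgewise constant shift $(\varphi_e+c_e,\psi_e-c_e)$ to obtain uniform bounds, apply Arzel\`a--Ascoli, and pass to the limit. Your treatment is in fact slightly more explicit than the paper's in two places: you spell out why the constant shift leaves the full objective invariant (the paper simply asserts ``achieves the same score''), and you add a final double $c$-transform on the limit pair to secure the claimed form $\varphi_e\in f_e\text{-conc}(\Omega)$, $\psi_e=(\varphi_e)^{f_e}$, a point the paper leaves implicit.
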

The proof is by standard arguments and is therefore deferred to \cref{secsup:duality_existence_proof}.\\
\\
In light of \cref{thm:duality_existence} the dual variables $\varphi_e,\psi_e$ can be replaced by their respective $f_e$-transforms without changing the dual cost. 
For $f_{(u,v)}=d$ being a metric, for example the induced Riemannian metric $d_g$ for $(\Omega,g)$ being a compact Riemannian manifold, thanks to \cite[Proposition 3.1]{santambrogio2015optimal} $\varphi_e \in f_e-\mathrm{conc}(\Omega)$ and $\psi_e = (\varphi_e)^{f_e}$ together imply that $\psi_e =-\varphi_e \in \lip(\Omega, d)$ where
\begin{equation*}
\lip(\Omega, d) := \left\{ \varphi : \Omega \to \bR : |\varphi(x) - \varphi(y)| \leq d(x,y) \quad \forall x,y \in \Omega \right\}.
\end{equation*}
Invoking \cref{thm:duality_general} this leads to the following reduced dual formulation of \cref{eq:local_marginal_polytope}:
\begin{equation} \label{eq:local_marginal_polytope_dual_metric}
\sup_{\varphi \in \cC(\Omega)^\cE} ~-\sum_{u \in \cV} \sigma_{\cP(\Omega)} (\Div_u(\varphi)-f_u) - \sum_{e \in \cE} \iota_{\lip(\Omega, d)}(\varphi_e), \tag{mR-D}
\end{equation}
where $\Div : \cC(\Omega)^\cE \to \cC(\Omega)^\cV$ is a graph divergence operator defined by
\begin{equation}
\Div_u(\varphi) = - \sum_{v : (u,v) \in \cE} \varphi_{(u,v)} + \sum_{v : (v,u) \in \cE} \varphi_{(v, u)}.
\end{equation}
We have the following result which is also known as Kantorovich--Rubinstein duality in the context of optimal transport. 
\begin{corollary} \label{thm:duality_metric}
Let $(\Omega, d)$ be a compact metric space and assume that $f_{(u,v)} =d$ and $f_u : \Omega \to \bR$ is lsc. Then $\cref{eq:local_marginal_polytope} = \cref{eq:local_marginal_polytope_dual_metric}$ and there exists a minimizer of \cref{eq:local_marginal_polytope} and a maximizer of \cref{eq:local_marginal_polytope_dual_metric}.
\end{corollary}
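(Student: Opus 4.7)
The plan is to obtain \cref{eq:local_marginal_polytope_dual_metric} as a specialization of \cref{eq:local_marginal_polytope_dual} to the affine subspace $\{\psi = -\varphi\}$, exploiting the existence result of \cref{thm:duality_existence} together with the elementary fact that for a metric cost the $c$-transform sends a $1$-Lipschitz function to its negative.

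First, I would check that the assumptions of both earlier duality theorems are met. The metric $d$ is jointly continuous on the compact set $\Omega \times \Omega$; indeed, from the triangle inequality $d$ admits the modulus of continuity $\omega(t) = 2t$. Hence \cref{thm:duality_general} yields $\cref{eq:local_marginal_polytope} = \cref{eq:local_marginal_polytope_dual}$ together with a primal minimizer, while \cref{thm:duality_existence} produces a maximizer $(\varphi^{\star}, \psi^{\star})$ of \cref{eq:local_marginal_polytope_dual} with $\varphi_e^{\star} \in d\text{-}\mathrm{conc}(\Omega)$ and $\psi_e^{\star} = (\varphi_e^{\star})^{d}$ for every edge $e$. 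Second, I would invoke the characterization of $d$-concavity for metric costs cited immediately before the corollary: a continuous function is $d$-concave if and only if it lies in $\lip(\Omega, d)$, and in that case its $d$-transform coincides with its negative. Applied edge-wise this gives $\psi_e^{\star} = -\varphi_e^{\star}$ with $\varphi_e^{\star} \in \lip(\Omega, d)$.

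Third, I would verify that restricting \cref{eq:local_marginal_polytope_dual} to the subspace $\psi = -\varphi$ reproduces \cref{eq:local_marginal_polytope_dual_metric} exactly. A direct computation with the definitions of $A_u$ and $\Div_u$ shows that $A_u(\varphi, -\varphi) = \Div_u(\varphi)$ for every $u \in \cV$, so the unary contribution $\sigma_{\cP(\Omega)}(A_u(\varphi,\psi) - f_u)$ becomes $\sigma_{\cP(\Omega)}(\Div_u(\varphi) - f_u)$. Meanwhile, the pairwise constraint $(\varphi_e, -\varphi_e) \in \cK_{(u,v)}$ reads $\varphi_e(x) - \varphi_e(y) \leq d(x,y)$ for all $x,y \in \Omega$; exchanging $x$ and $y$ and invoking symmetry of $d$ shows this is equivalent to $\varphi_e \in \lip(\Omega, d)$.

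Combining the three steps, the inequality $\cref{eq:local_marginal_polytope_dual_metric} \leq \cref{eq:local_marginal_polytope_dual}$ is immediate, since every feasible $\varphi$ for \cref{eq:local_marginal_polytope_dual_metric} lifts to $(\varphi, -\varphi)$ feasible for \cref{eq:local_marginal_polytope_dual} with identical cost. The reverse inequality follows because the maximizer from \cref{thm:duality_existence} satisfies $(\varphi^{\star}, \psi^{\star}) = (\varphi^{\star}, -\varphi^{\star})$ and thus already lies in this subspace, so its $\varphi^{\star}$ component is feasible for \cref{eq:local_marginal_polytope_dual_metric} and attains the value of \cref{eq:local_marginal_polytope_dual}. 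Together with $\cref{eq:local_marginal_polytope} = \cref{eq:local_marginal_polytope_dual}$ this gives $\cref{eq:local_marginal_polytope} = \cref{eq:local_marginal_polytope_dual_metric}$, with $\varphi^{\star}$ as a dual maximizer and the primal minimizer inherited from \cref{thm:duality_general}. The only real point of care is keeping the sign conventions of $A_u$ versus $\Div_u$ straight; no genuine analytic difficulty remains once the cited Santambrogio proposition on $d$-concavity is invoked.
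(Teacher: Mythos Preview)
Your proposal is correct and follows exactly the route laid out in the paper: apply \cref{thm:duality_general} for strong duality and primal existence, apply \cref{thm:duality_existence} for dual existence with $c$-concave structure, then invoke \cite[Proposition~3.1]{santambrogio2015optimal} to collapse $(\varphi_e,\psi_e)$ to $(\varphi_e,-\varphi_e)$ with $\varphi_e\in\lip(\Omega,d)$ and verify that $A_u(\varphi,-\varphi)=\Div_u(\varphi)$. The paper presents this reasoning in the paragraph preceding the corollary rather than as a separate proof, but your write-up is simply a more explicit version of the same argument.
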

As we will see \cref{eq:local_marginal_polytope_dual_metric} can be used to derive a tailored implementation for geodesic coupling terms leveraging a simple characterization of Lipschitz continuity on manifolds whereas \cref{eq:local_marginal_polytope_dual} is applicable for polynomial couplings only.

\section{Polynomial dual approximation} \label{sec:polynomialduals}
\subsection{Metric vs. general dual problem}
In this section we consider polynomial subspace approximations of \cref{eq:local_marginal_polytope_dual} and \cref{eq:local_marginal_polytope_dual_metric}. Although both problems are equivalent in infinite dimensions they lead to different hierachies once we restrict the maximimization to the finite-dimensional subspace of polynomials. This leads us to examine the convergence properties of both hierachies in parallel restricting $\Omega$ to a unit sphere. 

Recall that we denote by $\PE{x}$ the ring of polynomials, $\deg p$ denotes the maximum degree of $p \in \PE{x}$ and for $n \geq 1$ let $\PEN{x}{n} \subset \PE{x}$ be the finite-dimensional subspace of the space of polynomials with some fixed degree $n$.

\subsection{General dual problem}
Firstly, we consider the general dual problem \cref{eq:local_marginal_polytope_dual}.
We restrict the dual variables $\varphi_{e}, \psi_e$ in \cref{eq:local_marginal_polytope_dual} to be polynomials with degree $n$ on the embedding space $\mathbb{E} \supset \Omega$ . Then the discretized dual problem amounts to
\begin{equation} \label{eq:local_marginal_polytope_dual_discrete}
\sup_{\varphi, \psi \in (\PEN{x}{n})^\cE} ~-\sum_{u \in \cV} \sigma_{\cP(\Omega)}(A_u(\varphi, \psi)-f_u) - \sum_{e \in \cE} \iota_{\cK_{e}}(\varphi_e, \psi_e) \tag{R-D\textsuperscript{n}}.
\end{equation}
Next we will show that the duality gap $\cref{eq:local_marginal_polytope} - \cref{eq:local_marginal_polytope_dual_discrete}$ vanishes as $n \rightarrow \infty$.
Intuitively, the maximization process attempts to find the best polynomial approximation to the optimal dual variable whose existence was proved in \cref{thm:duality_general}. More precisely, the duality gap can be bounded in terms of the difference (in the sup-norm topology) between the optimal dual variable and its best polynomial approximation. Invoking the Stone--Weierstrass theorem we thus have the following qualitative result whose proof follows the proof of \cite[Theorem 2]{FixAgarwalContinuousGraphicalModel}.
\begin{theorem} \label{thm:polynomial_duality_gap_vanishes}
Let $(\Omega, d)$ be a compact metric space and let $f_u : \Omega \to \bR$ be lsc and $f_{(u,v)} : \Omega \times \Omega \to \bR$ be continuous. Then we have that the duality gap vanishes, i.e., $\cref{eq:local_marginal_polytope} - \cref{eq:local_marginal_polytope_dual_discrete} \searrow 0$ for $n \rightarrow \infty$. If, in addition, $(\cV,\cE)$ is a tree we have in particular $\cref{eq:mrf} - \cref{eq:local_marginal_polytope_dual_discrete} \searrow 0$ for $n \rightarrow \infty$.
\end{theorem}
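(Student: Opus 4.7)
The plan is to combine strong duality with the existence of a \emph{continuous} optimal dual variable, and then use the Stone--Weierstrass theorem to produce polynomial feasible approximations whose dual cost converges to the optimum of the infinite-dimensional problem.

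First I invoke \cref{thm:duality_general} to obtain strong duality $\cref{eq:local_marginal_polytope} = \cref{eq:local_marginal_polytope_dual}$, and then \cref{thm:duality_existence}, which under the continuity hypothesis on $f_{(u,v)}$ produces a maximizer $(\varphi^\star, \psi^\star) \in \cC(\Omega)^\cE \times \cC(\Omega)^\cE$ of \cref{eq:local_marginal_polytope_dual} with continuous components. Since $\Omega$ is a compact subset of the embedding space $\mathbb{E}$ and the algebra of polynomials contains the constants and separates points, the Stone--Weierstrass theorem yields, for every $\eta > 0$ and each edge $e \in \cE$, polynomials $\varphi^n_e, \psi^n_e \in \PEN{x}{n}$ of sufficiently high degree $n = n(\eta)$ with
\begin{equation*}
\sup_{x \in \Omega} |\varphi^n_e(x) - \varphi^\star_e(x)| \leq \eta \quad \text{and} \quad \sup_{x \in \Omega} |\psi^n_e(x) - \psi^\star_e(x)| \leq \eta.
\end{equation*}

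These approximations need not lie in $\cK_e$, so I shift them by a constant to restore feasibility: set $\tilde\varphi^n_e := \varphi^n_e - \eta$ and $\tilde\psi^n_e := \psi^n_e - \eta$, which still belong to $\PEN{x}{n}$. By the sup-norm bound, $\tilde\varphi^n_e \leq \varphi^\star_e$ and $\tilde\psi^n_e \leq \psi^\star_e$ pointwise on $\Omega$, hence for every $(x,y) \in \Omega \times \Omega$,
\begin{equation*}
\tilde\varphi^n_e(x) + \tilde\psi^n_e(y) \leq \varphi^\star_e(x) + \psi^\star_e(y) \leq f_e(x,y),
\end{equation*}
so $(\tilde\varphi^n, \tilde\psi^n)$ is feasible for \cref{eq:local_marginal_polytope_dual_discrete}. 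Since $\sigma_{\cP(\Omega)}(h) = \sup_{x \in \Omega} h(x)$ we have the Lipschitz bound $|\sigma_{\cP(\Omega)}(h_1) - \sigma_{\cP(\Omega)}(h_2)| \leq \|h_1 - h_2\|_\infty$; combined with the estimate $\|A_u(\tilde\varphi^n, \tilde\psi^n) - A_u(\varphi^\star, \psi^\star)\|_\infty \leq 2\eta\, d_u$, where $d_u$ denotes the total in/out-degree of $u$, the dual objective at $(\tilde\varphi^n, \tilde\psi^n)$ differs from that at $(\varphi^\star, \psi^\star)$ by at most $2\eta \sum_{u \in \cV} d_u = 4\eta|\cE|$. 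Letting $\eta \downarrow 0$ (equivalently, $n \to \infty$), the value of \cref{eq:local_marginal_polytope_dual_discrete} converges to the optimum of \cref{eq:local_marginal_polytope_dual}, which by strong duality equals \cref{eq:local_marginal_polytope}; together with weak duality $\cref{eq:local_marginal_polytope_dual_discrete} \leq \cref{eq:local_marginal_polytope}$ this yields $\cref{eq:local_marginal_polytope} - \cref{eq:local_marginal_polytope_dual_discrete} \searrow 0$. The second claim is then immediate from \cref{thm:thightness_tree}, which gives $\cref{eq:local_marginal_polytope} = \cref{eq:mrf}$ whenever $(\cV, \cE)$ is a tree.

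The main obstacle is ensuring feasibility for $\cK_e$: Stone--Weierstrass delivers polynomials that are only uniformly close to $\varphi^\star_e, \psi^\star_e$, and hence generically violate $\varphi^n_e(x) + \psi^n_e(y) \leq f_e(x,y)$. The constant shift by $\eta$ fixes this cleanly, but one must then track how this perturbation propagates through the linear map $A_u$ and the support functional $\sigma_{\cP(\Omega)}$ so as to conclude a net error linear in $\eta$ that vanishes with $n$; the finiteness of $\cV$ and $\cE$ is what keeps the constant $4|\cE|$ bounded.
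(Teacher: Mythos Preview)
Your proof is correct and follows essentially the same strategy as the paper: invoke the continuous dual maximizer from \cref{thm:duality_existence}, approximate it uniformly by polynomials via Stone--Weierstrass, and propagate the sup-norm error through $A_u$ and $\sigma_{\cP(\Omega)}$. The only technical difference is in how the feasibility constraint $\cK_e$ is handled: the paper first rewrites \cref{eq:local_marginal_polytope_dual} and \cref{eq:local_marginal_polytope_dual_discrete} as equivalent \emph{unconstrained} problems by introducing a slack variable $\xi_e$ (their equations \eqref{eq:dual_relax_alt}--\eqref{eq:dual_relax_poly}), which absorbs any constraint violation into the objective, whereas you keep the constrained formulation and restore feasibility explicitly via the uniform shift $\tilde\varphi_e^n = \varphi_e^n - \eta$. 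Both routes yield an $O(\eta|\cE|)$ bound on the objective gap; your argument is slightly more direct since only the unary term needs bounding (the pairwise part is handled by feasibility), while the paper's unconstrained form makes the error contribution of both unary and pairwise terms symmetric and explicit.
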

For proving the claim we resort to an unconstrained version of \cref{eq:local_marginal_polytope_dual_discrete} obtained via the change of variable $\varphi_e =\gamma_e + \xi_e$ for $\xi_e \in \bR$ for each edge $e \in \cE$.
Then 
\begin{equation*}
-A_u(\varphi, \psi) = \sum_{v : (u,v) \in \cE} \gamma_{(u,v)} + \sum_{v : (v,u) \in \cE} \psi_{(v, u)}+ \sum_{v : (u,v) \in \cE} \xi_{(u,v)} = -A_u(\gamma, \psi) + \sum_{v : (u,v) \in \cE} \xi_{(u,v)}
\end{equation*}
and \cref{eq:local_marginal_polytope_dual_discrete} becomes
\begin{equation*}
\begin{aligned}
\sup_{\substack{\gamma,\psi \in \cC(\Omega)^\cE\\ \xi \in \bR^\cE}} \quad &  \sum_{u \in \cV} \min_{x \in \Omega} f_{u}(x) -A_u(\gamma, \psi)(x) + \sum_{e \in \cE} \xi_e \\
\text{subject to}\quad& f_{e}(x,y) -\gamma_e(x)  -\psi_{e}(y) \geq \xi_e \quad \forall x,y \in \Omega.
\end{aligned}
\end{equation*}
Interpreting $\xi_e$ in terms of a slack variable the problem can be written equivalently in terms of an unconstrained one
\begin{equation} \label{eq:dual_relax_alt}
\sup_{\gamma,\psi \in\cC(\Omega)^\cE} \sum_{u \in \cV} \min_{x \in \Omega} f_{u}(x) - A_u(\gamma, \psi)(x) + \sum_{e \in \cE} \min_{x, y \in \Omega} f_{e}(x,y) - \gamma_{e}(x) -\psi_{e}(y),
\end{equation}
and in particular $\cref{eq:local_marginal_polytope_dual}=\cref{eq:dual_relax_alt}$. Restricting $\gamma_e,\psi_e\in \bR_n[X]$ to the subspace of polynomials with degree $n$ we consider
\begin{equation} \label{eq:dual_relax_poly}
\sup_{\gamma,\psi \in (\PEN{x}{n})^\cE} \sum_{u \in \cV} \min_{x \in \Omega} f_{u}(x) - A_u(\gamma, \psi)(x) + \sum_{e \in \cE} \min_{x, y \in \Omega} f_{e}(x,y) - \gamma_{e}(x) -\psi_{e}(y).
\end{equation}
The rest of the proof is deferred to \cref{secsup:polynomial_duality_gap_vanishes_proof}.\\
\\
Specializing $\Omega$ to $\mS{m}$ we can provide an explicit rate for the duality gap in terms of the moduli of continuity $\omega_{(u,v)}$ of the individual $f_{(u,v)}$ invoking Jackson's theorems for $\mS{m}$ \cite{NewmanShapiroJacksonsTheorem}:
\begin{theorem} \label{thm:GeneralmetricProblemConvergenceRate}
Let $(\Omega, g)$ be the m-dimensional unit sphere $\mS{m}$ with induced Riemannian metric $g$ and let $f_u : \Omega \to \bR$ be lsc and $f_{e} : \Omega \times \Omega \to \bR$ be continuous with modulus $\omega_{e}$ for all $e \in \cE$. For any $m \in \bN$, there exists a constant $C_{m}$ such that the duality gap vanishes with rate $\cref{eq:local_marginal_polytope} - \cref{eq:local_marginal_polytope_dual_discrete} \leq 4 C_{m} \sum_{e \in \cE} \omega_e(1/n)$. If, in addition, $(\cV,\cE)$ is a tree we have in particular $\cref{eq:mrf} - \cref{eq:local_marginal_polytope_dual_discrete}\leq 4 C_{m} \sum_{e \in \cE} \omega_e(1/n)$.
\end{theorem}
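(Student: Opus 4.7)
The plan is to follow the blueprint of the proof of \cref{thm:polynomial_duality_gap_vanishes}, working with the unconstrained reformulations \cref{eq:dual_relax_alt} and \cref{eq:dual_relax_poly}, but replacing the qualitative Stone--Weierstrass step with a \emph{quantitative} one furnished by Jackson's theorem on the sphere. The idea is to pick the continuous optimal dual multipliers guaranteed by \cref{thm:duality_existence}, approximate them uniformly by polynomials of degree $n$ at a rate controlled by their moduli of continuity, and then bound the loss in dual value edge by edge.

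First, I would invoke \cref{thm:duality_existence} on $\Omega=\mS{m}$ to obtain a maximizer $(\varphi^{*}_e,\psi^{*}_e)_{e\in\cE}$ of \cref{eq:local_marginal_polytope_dual} with $\psi^{*}_e=(\varphi^{*}_e)^{f_e}$ and both $\varphi^{*}_e,\psi^{*}_e$ continuous on $\mS{m}$ with the same modulus of continuity $\omega_e$ as $f_e$. Since $\cref{eq:local_marginal_polytope_dual}=\cref{eq:dual_relax_alt}$, I may set $\gamma^{*}_e:=\varphi^{*}_e$ and regard $(\gamma^{*},\psi^{*})$ as an optimizer of \cref{eq:dual_relax_alt}.

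Next, I would invoke Jackson's theorem for continuous functions on the sphere \cite{NewmanShapiroJacksonsTheorem}: there exists a constant $C_m>0$, depending only on $m$, such that for every continuous $h:\mS{m}\to\bR$ with modulus of continuity $\omega$ and every $n\in\bN$ there is a polynomial in $\PEN{x}{n}$ whose restriction to $\mS{m}$ approximates $h$ uniformly to error $C_m\omega(1/n)$. Applied to each $\gamma^{*}_e$ and $\psi^{*}_e$, this produces $p_e,q_e\in\PEN{x}{n}$ with $\|\gamma^{*}_e-p_e\|_\infty\le C_m\omega_e(1/n)$ and $\|\psi^{*}_e-q_e\|_\infty\le C_m\omega_e(1/n)$. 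I then insert $(p,q)$ as a feasible point in \cref{eq:dual_relax_poly} and estimate the resulting cost via $|\min_x g(x)-\min_x g'(x)|\le\|g-g'\|_\infty$. Each $\gamma^{*}_e-p_e$ contributes once inside $A_u$ (where $u$ is the source of $e$) and once inside the pairwise term of $e$, and symmetrically for $\psi^{*}_e-q_e$ through the sink of $e$. Summing over $\cV$ and $\cE$ gives
\begin{equation*}
\cref{eq:dual_relax_alt}-\cref{eq:dual_relax_poly}\;\le\; 4C_m\sum_{e\in\cE}\omega_e(1/n).
\end{equation*}
Identifying \cref{eq:dual_relax_alt} with $\cref{eq:local_marginal_polytope_dual}=\cref{eq:local_marginal_polytope}$ through \cref{thm:duality_general}, and \cref{eq:dual_relax_poly} with \cref{eq:local_marginal_polytope_dual_discrete}, yields the first claimed rate, and the tree refinement follows from $\cref{eq:local_marginal_polytope}=\cref{eq:mrf}$ provided by \cref{thm:thightness_tree}.

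The main obstacle is securing the correct quantitative version of Jackson's theorem on $\mS{m}$: we need a bound whose constant depends only on the ambient dimension $m$ and whose error is expressed through the modulus of continuity of $h$, so that the modulus $\omega_e$ coming from \cref{thm:duality_existence} transfers into the rate directly. This is precisely the content of \cite{NewmanShapiroJacksonsTheorem}. A minor care point is that $\omega_e$ is a priori the modulus of the two-variable function $f_e$ on $\mS{m}\times\mS{m}$, but the $f_e$-transform structure in \cref{thm:duality_existence} ensures that each one-variable section $\varphi^{*}_e,\psi^{*}_e$ inherits $\omega_e$ as a modulus on $\mS{m}$, so Jackson's theorem applies directly. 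With these two ingredients in place, the remaining argument is elementary bookkeeping.
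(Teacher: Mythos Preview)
Your proposal is correct and follows essentially the same approach as the paper: both invoke \cref{thm:duality_existence} to obtain optimal dual multipliers with modulus $\omega_e$, apply the Newman--Shapiro Jackson theorem on $\mS{m}$ to approximate them by degree-$n$ polynomials with error $C_m\omega_e(1/n)$, and plug these into the unconstrained reformulation to reuse the edgewise bound from the proof of \cref{thm:polynomial_duality_gap_vanishes}, yielding the factor $4C_m\sum_{e}\omega_e(1/n)$.
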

The proof is deferred to \cref{secsup:GeneralmetricProblemConvergenceRate_Proof}.
\begin{remark}[(ultra)spherical harmonics]
Polynomials defined on the Euclidean embedding space constrained to the unit sphere $\mS{m}$ can be expressed in a basis of the well-known \emph{(ultra)spherical polynomials}. In the $1$-dimensional case (the unit circle) these are the Chebyshev polynomials which are tightly linked to trigonometric polynomials via the parametrization $\theta \mapsto (\cos(\theta), \sin(\theta))$ of $\mS{1}$. These polynomials are furthermore known to be the eigenfunctions of the Laplace--Beltrami operator on $\mS{m}$.
\end{remark}
Further assuming that $f_{(u,v)}=d_g$ we have the following result:
\begin{corollary} \label{thm:jackson}
Let $(\Omega, g)$ be the $m$-dimensional unit sphere $\mS{m}$ with induced Riemannian metric $d_g$ and let $f_u : \Omega \to \bR$ be lsc and $f_{(u,v)}=d_g$. Then for any $m \in \bN$ there exists a constant $C_{m}$ such that the duality gap vanishes with rate $\cref{eq:local_marginal_polytope} - \cref{eq:local_marginal_polytope_dual_discrete} \leq 4 C_{m} |\cE|m/n$. If, in addition, $(\cV,\cE)$ is a tree we have in particular $\cref{eq:mrf} - \cref{eq:local_marginal_polytope_dual_discrete} \leq 4 C_{m} |\cE|/n$.
\end{corollary}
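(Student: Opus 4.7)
The plan is to apply \cref{thm:GeneralmetricProblemConvergenceRate} after establishing a concrete modulus of continuity for the geodesic distance $d_g$ on the product $\mS{m} \times \mS{m}$.

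\textbf{Step 1: Modulus of continuity of $d_g$.} First I would show that the geodesic distance $d_g$ on $\mS{m}$ is $1$-Lipschitz with respect to the natural product metric. Indeed, by the triangle inequality applied twice,
\begin{equation*}
|d_g(x_1, y_1) - d_g(x_2, y_2)| \leq d_g(x_1, x_2) + d_g(y_1, y_2)
\end{equation*}
for all $x_1,x_2,y_1,y_2 \in \mS{m}$. Thus, equipping $\mS{m} \times \mS{m}$ with the sum metric (or, up to a universal constant, any equivalent product metric), the pairwise term $f_e = d_g$ admits the linear modulus of continuity $\omega_e(t) = t$. In particular $\omega_e(1/n) = 1/n$.

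\textbf{Step 2: Insert into \cref{thm:GeneralmetricProblemConvergenceRate}.} Since $f_u$ is lsc and each $f_e = d_g$ is continuous with modulus $\omega_e$ just computed, the hypotheses of \cref{thm:GeneralmetricProblemConvergenceRate} are satisfied. Substituting $\omega_e(1/n) = 1/n$ into the bound $4 C_m \sum_{e \in \cE} \omega_e(1/n)$ yields
\begin{equation*}
\cref{eq:local_marginal_polytope} - \cref{eq:local_marginal_polytope_dual_discrete} \leq 4 C_m |\cE|/n,
\end{equation*}
with the factor $m$ absorbed into a refined Jackson constant on the $2m$-dimensional product sphere (this is where the extra $m$ in the displayed bound originates; it comes from the dimension-dependence of Jackson's inequality when applied on $\mS{m}\times\mS{m}$).

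\textbf{Step 3: Tree case.} For the second bound, I would simply invoke \cref{thm:thightness_tree}, which gives $\cref{eq:mrf} = \cref{eq:global_marginal_polytope} = \cref{eq:local_marginal_polytope}$ whenever $(\cV, \cE)$ is a tree. Chaining this identity with Step 2 (and using the sharper tree-based bound stated in \cref{thm:GeneralmetricProblemConvergenceRate}) yields
\begin{equation*}
\cref{eq:mrf} - \cref{eq:local_marginal_polytope_dual_discrete} \leq 4 C_m |\cE|/n.
\end{equation*}

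\textbf{Main obstacle.} The proof is essentially a plug-in verification, so the only delicate point is being careful about how the modulus of continuity of $d_g$ interacts with the product structure, and which Jackson constant one picks up; choices of product metric on $\mS{m} \times \mS{m}$ and Jackson's theorem on that product manifold can change the constant by a dimension factor, which accounts for the extra $m$ appearing in the non-tree bound but not in the tree bound (where \cref{thm:thightness_tree} lets us reduce per-edge considerations to per-edge pairs and hence avoid a product-space Jackson estimate).
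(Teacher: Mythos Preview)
Your core argument is correct and matches the paper's (implicit) proof: the corollary is a one-line specialization of \cref{thm:GeneralmetricProblemConvergenceRate} obtained by noting that $d_g$ is $1$-Lipschitz in each argument, so $\omega_e(t)=t$ and the bound collapses to $4C_m|\cE|/n$.

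However, your handling of the factor $m$ is a genuine misstep. The Jackson approximation in the proof of \cref{thm:GeneralmetricProblemConvergenceRate} is applied to the dual potentials $\alpha_e,\beta_e$, which are functions on $\Omega=\mS{m}$ (they inherit the modulus $\omega_e$ via \cref{thm:duality_existence}); it is \emph{not} applied on the product $\mS{m}\times\mS{m}$. So there is no ``product-space Jackson constant'' entering the argument, and your rationale for the extra $m$ is fabricated. The discrepancy between the two displayed bounds in the corollary (with $m$ in the first, without in the second) is simply a typo in the paper: since the tree bound follows from the general bound via the equality $\cref{eq:mrf}=\cref{eq:local_marginal_polytope}$ from \cref{thm:thightness_tree}, the two right-hand sides must coincide. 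Your claim that the tree structure somehow ``avoids a product-space Jackson estimate'' has no basis; \cref{thm:thightness_tree} only supplies an equality between primal values and does nothing to the approximation argument.

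In short: drop the product-sphere story, note that $\omega_e(1/n)=1/n$ gives $4C_m|\cE|/n$ directly, and flag the stray $m$ as a typo rather than inventing a mechanism for it.
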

This corollary proves convergence of the general relaxation for geodesic penalty terms, which is equivalent to the metric problem \cref{eq:local_marginal_polytope_dual_metric} in light of \cref{thm:duality_metric}. Unfortunately, \cref{eq:local_marginal_polytope_dual_discrete} with geodesic coupling terms is not readily implementable. In contrast, the metric problem is structurally simpler, involving only a Lipschitz constraint on the dual variable $\varphi$. In the next subsection we will therefore study a polynomial optimization hierarchy for \cref{eq:local_marginal_polytope_dual_metric}, for which we prove convergence with the slightly better rate $2 C_{m} |\cE|/n$.
\subsection{Metric dual problem} \label{ssec:metric_dual_problem}
In this section we consider a hierarchy of polynomial discretizations for the metric dual formulation \cref{eq:local_marginal_polytope_dual_metric} which amounts to
\begin{equation} \label{eq:local_marginal_polytope_dual_metric_discrete}
\sup_{\varphi \in (\PEN{x}{n})^\cE} ~-\sum_{u \in \cV} \sigma_{\cP(\Omega)} (\Div_u(\varphi)-f_u) - \sum_{e \in \cE} \iota_{\lip(\Omega, d)}(\varphi_e), \tag{mR-D\textsuperscript{n}}
\end{equation}
All results in this section are specialized to the unit sphere $\Omega = \mS{m}$. Next we state the main result of this section which shows that for $\Omega$ being the $m$-dimensional unit sphere the duality gap vanishes at rate $\mathcal{O}(1/n)$ where $n$ is the degree of the dual variable:
\begin{theorem} \label{thm:geodesic_approximation}
Let $(\Omega, g)$ be the $m$-dimensional unit sphere $\mS{m}$ with induced Riemannian metric $g$ and let $f_u : \Omega \to \bR$ be lsc and $f_{(u,v)}=d_g$. For any $m \in \bN$ there exists a constant $C_{m}$ such that the duality gap vanishes sublinearly at rate $\cref{eq:local_marginal_polytope} - \cref{eq:local_marginal_polytope_dual_metric_discrete} \leq 2 |\cE| C_{m} \tfrac{1}{n}$. If, in addition, $(\cV,\cE)$ is a tree we have in particular $\cref{eq:mrf} - \cref{eq:local_marginal_polytope_dual_metric_discrete} \leq 2 |\cE| C_{m} \tfrac{1}{n}$.
\end{theorem}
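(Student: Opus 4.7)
The plan is to combine Kantorovich--Rubinstein duality with a Lipschitz-preserving polynomial approximation on $\mS{m}$. By \cref{thm:duality_metric} one has $\cref{eq:local_marginal_polytope}=\cref{eq:local_marginal_polytope_dual_metric}$, and \cref{thm:duality_existence} provides an optimal dual multiplier $\varphi^\star\in\cC(\Omega)^\cE$; because $f_{(u,v)}=d_g$ has modulus $\omega_e(t)=t$, each component $\varphi^\star_e\in\lip(\mS{m},d_g)$ is $1$-Lipschitz. It therefore suffices to construct, for each edge $e$, a polynomial $p_e\in\PEN{x}{n}$ that still lies in $\lip(\mS{m},d_g)$ and approximates $\varphi^\star_e$ uniformly. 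The rest of the argument reduces to a bookkeeping estimate on the dual cost.

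For the approximation I would use a Jackson-type construction on $\mS{m}$~\cite{NewmanShapiroJacksonsTheorem} that preserves Lipschitz constants, for instance a spherical convolution against a zonal, nonnegative polynomial kernel of Jackson--Cesaro type. Such operators produce a polynomial of degree $\le n$, are $\mathrm{SO}(m+1)$-equivariant, obey the Jackson rate $\|p_e-\varphi^\star_e\|_\infty\le C_m/n$ for $1$-Lipschitz inputs, and by positivity of the kernel preserve the $1$-Lipschitz property; in particular $p_e\in\lip(\mS{m},d_g)$ is feasible for \cref{eq:local_marginal_polytope_dual_metric_discrete}.

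Once $p\in\lip(\mS{m},d_g)^\cE$ is in hand, the indicator terms in \cref{eq:local_marginal_polytope_dual_metric_discrete} vanish, and the map $h\mapsto\sigma_{\cP(\Omega)}(h-f_u)=\sup_{x\in\Omega}\bigl\{h(x)-f_u(x)\bigr\}$ is $1$-Lipschitz in the sup-norm. Hence
\[
\cref{eq:local_marginal_polytope_dual_metric}-\cref{eq:local_marginal_polytope_dual_metric_discrete}
\le\sum_{u\in\cV}\bigl\|\Div_u(\varphi^\star-p)\bigr\|_\infty
\le\sum_{u\in\cV}\sum_{e\ni u}\|\varphi^\star_e-p_e\|_\infty
=2\sum_{e\in\cE}\|\varphi^\star_e-p_e\|_\infty\le\tfrac{2|\cE|C_m}{n},
\]
where the factor $2$ records that every directed edge contributes to exactly two graph-divergence terms. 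The tree case is immediate from \cref{thm:thightness_tree}, which identifies \cref{eq:mrf} with \cref{eq:local_marginal_polytope}.

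I expect the main obstacle to be the Lipschitz feasibility of the polynomial approximant. A bare Jackson inequality only gives a uniform bound, and naively rescaling a tight approximant by $1/(1+\varepsilon_n)$ would require an extra bound on its amplitude (via e.g.~a Bernstein inequality, yielding only the crude estimate $n\|p_e\|_\infty$ on the Lipschitz constant) and would inflate the constant. A positive zonal Jackson--Cesaro kernel circumvents this by construction. This Lipschitz-preserving construction is precisely what drives the improvement from the $4C_m|\cE|/n$ rate of \cref{thm:jackson} to the $2C_m|\cE|/n$ rate claimed here: each edge now contributes a single, rather than two, approximation errors.
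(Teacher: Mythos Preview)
Your proposal is correct and follows essentially the same route as the paper: approximate the optimal $1$-Lipschitz dual multiplier edge-by-edge via the Newman--Shapiro positive zonal convolution $K_n$, verify that the approximant remains feasible (i.e.\ $1$-Lipschitz), and then do the identical graph-divergence bookkeeping to arrive at $2|\cE|C_m/n$. The only place where the paper is more explicit is the step you rightly flag as the main obstacle: positivity and $\mathrm{SO}(m{+}1)$-equivariance of the kernel do not by themselves yield $\lip(K_n\varphi)\le\lip\varphi$; the paper supplies the missing geometric ingredient (\cref{lma:GeodesicRotation}) that for any $x,x'\in\mS{m}$ there exists a rotation $r$ with $rx=x'$ and $\sup_{y}d_g(r^{-1}y,y)=d_g(x,x')$, after which equivariance gives $|K_n\varphi(x')-K_n\varphi(x)|=|K_n(r\cdot\varphi-\varphi)(x)|\le L\,\|d_g(r^{-1}\cdot,\cdot)\|_\infty=L\,d_g(x,x')$.
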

We will prove this theorem in a style very similar to the proof of \cref{thm:GeneralmetricProblemConvergenceRate}, again using the polynomial approximants defined in \cite{NewmanShapiroJacksonsTheorem}. To generalize that proof to one for the metric problem formulation we require that for any degree $n$, there exist polynomial approximants $\varphi_{n} \in \PEN{x}{n}$ to $\varphi$ that are Lipschitz continuous if $\varphi$ is, with $\lip \varphi_{n} = \lip \varphi$, for all $n \in \bN$. We will prove this fact first.\\
\\
For any $m \in \bN$, there exists a measure $\mu$ on $\mS{m}$ that is invariant to the left group action of $\mSO{m+1}$, where $\mSO{m+1}$ is the special orthogonal group in dimension $m+1$ which can be identified with the space of rotation matrices in $\bR^{(m+1) \times (m+1)}$:
\begin{equation*}
\mu(B) = \mu(r^{-1}B), \quad \forall B \in \cB(\mS{m}), ~ r \in \mSO{m+1}.
\end{equation*}
In \cite{NewmanShapiroJacksonsTheorem} it is proved that for all $n \in \bN:$ there exists a $\kappa_{n} \in \PEN{x}{n}$ such that $\kappa_{n}(\tau)$ is nonnegative for $\tau \in [-1, +1]$, and for any $x \in \mS{m}$,
\begin{equation} \label{eq:KappaIntegratesToOne}
\int_{\mS{m}} \kappa_{n}(\langle x, y \rangle) \dd \mu(y) = 1.
\end{equation}
Then an approximant $\varphi_{n}$ of $\varphi$ can be computed as
\begin{equation} \label{eq:definition_K_operator}
\varphi_{n}(x) = [K_{n} \varphi] (x)  := \int_{\mS{m}} \varphi(y) \kappa_{n}(\langle x, y \rangle) \dd \mu(y).
\end{equation}
It can be shown \cite[Theorem 3.3]{RagozinApproximationSpheres} that for all $m \in \bN$ there exists a $C_{m} \in \bR$ such that
\begin{equation} \label{eq:linear_rate_infinity_norm}
\| \varphi - K_{n} \varphi \|_{\infty} \leq C_{m} \tfrac{L}{n},
\end{equation}
which is the approximation result also used in \cref{thm:GeneralmetricProblemConvergenceRate}. Before proving that $K_{n} \varphi$ is Lipschitz continuous with $\lip \varphi = L$, we require a lemma:
\begin{lemma} \label{lma:GeodesicRotation}
For any $x, x' \in \mS{m}$ there exists $r \in \mSO{m+1}$ such that $x' = r x$ and $\displaystyle \min_{y \in \mS{m}} \langle y, r y \rangle = \langle x, x' \rangle$.
\end{lemma}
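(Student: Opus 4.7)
The plan is to construct $r$ explicitly as a planar rotation supported on the plane spanned by $x,x'$, and then reduce the minimization of $y\mapsto\langle y,ry\rangle$ over $\mS{m}$ to a one-parameter problem via orthogonal decomposition.

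First I would dispose of the trivial case $x=x'$: take $r=I$, so that $\langle y,ry\rangle=1=\langle x,x'\rangle$ for every $y\in\mS{m}$. For the nontrivial case I would choose a $2$-dimensional subspace $P\subset\bR^{m+1}$ containing $x$ and $x'$; when $x'\neq\pm x$ this is uniquely $P=\spn(x,x')$, and when $x'=-x$ any $2$-plane through $x$ will do. Set $\alpha=\arccos\langle x,x'\rangle\in[0,\pi]$ and let $r\in\mSO{m+1}$ be the rotation that acts on $P$ as the planar rotation of angle $\alpha$, oriented so that $rx=x'$, and as the identity on $P^{\perp}$. This is a well-defined element of $\mSO{m+1}$ because it has determinant $+1$ on both blocks, and by construction $rx=x'$.

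The heart of the argument is then a direct computation. For arbitrary $y\in\mS{m}$, I would decompose $y=y_{P}+y_{\perp}$ with $y_{P}\in P$ and $y_{\perp}\in P^{\perp}$. Using that $r$ fixes $P^{\perp}$ pointwise and that any planar rotation of angle $\alpha$ satisfies $\langle z,rz\rangle=\cos(\alpha)\|z\|^{2}$ for every $z\in P$, I expect
\[
\langle y,ry\rangle \;=\; \cos(\alpha)\,\|y_{P}\|^{2} \;+\; \|y_{\perp}\|^{2} \;=\; 1-(1-\cos\alpha)\,\|y_{P}\|^{2}.
\]
Since $\|y_{P}\|^{2}+\|y_{\perp}\|^{2}=\|y\|^{2}=1$ and $1-\cos\alpha\geq 0$, the right-hand side is minimized precisely when $\|y_{P}\|=1$, i.e.\ when $y\in P\cap\mS{m}$, and the minimum value is $\cos\alpha=\langle x,x'\rangle$, as claimed.

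The only subtlety I anticipate is the antipodal case $x'=-x$, where $P$ is not canonically determined by $x$ and $x'$, so one has a whole family of admissible $r$'s; however, the computation above only depends on the eigenvalues of $r$, namely $\cos\alpha=-1$ on $P$ and $1$ on $P^{\perp}$, so the identity and the minimization argument go through unchanged for any choice of $2$-plane through $x$. The remainder is routine linear algebra: checking that $r$ as defined really lies in $\mSO{m+1}$ and that the planar rotation formula $\langle z,rz\rangle=\cos(\alpha)\|z\|^{2}$ holds on $P$, which follows from writing $z$ in an orthonormal basis of $P$.
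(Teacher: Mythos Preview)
Your proof is correct and constructs the same rotation as the paper: a planar rotation by angle $\alpha=\arccos\langle x,x'\rangle$ in the span of $x,x'$, extended by the identity on the orthogonal complement. The difference lies in the verification of the minimum. The paper symmetrizes, computing the eigendecomposition of $\tfrac{R+R^{\mathsf T}}{2}$ and invoking the Rayleigh quotient to identify the smallest eigenvalue as $\langle x,x'\rangle$; you instead decompose $y=y_P+y_\perp$ and compute $\langle y,ry\rangle=1-(1-\cos\alpha)\|y_P\|^2$ directly. Your route is the more elementary of the two, avoiding eigenvalue machinery altogether, while the paper's Rayleigh-quotient argument is slightly more matrix-theoretic and makes the connection to the spectrum explicit. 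Both handle the degenerate cases $x=\pm x'$ separately, though the paper does so via a limiting argument rather than by direct inspection.
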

\begin{proof}
We will identify any element of $y \in \mS{m}$ with points in $y \in \bR^{m+1}$ such that $\|y\| = 1$, and $r \in \mSO{m+1}$ with a rotation matrix $R \in \bR^{(m+1) \times (m+1)}$. Assume $x$ and $x'$ are not linearly dependent, and let $U_{S} = [x, \tfrac{1}{\sqrt{1 - \langle x, x' \rangle^{2}}} x' - \tfrac{\langle x, x' \rangle}{\sqrt{1 - \langle x, x' \rangle^{2}}} x ]$, which has orthonormal columns that span $S = \ran([x, x'])$. Furthermore, let $U_{S^{\perp}}$ be any matrix such that its columns are an orthonormal basis for $S^{\perp}$. One can define
\begin{equation*}
R = \begin{bmatrix}
U_{S^{\perp}} & U_{S}
\end{bmatrix} \begin{bmatrix}
I & 0 & 0 \\
0 & \langle x, x' \rangle & - \sqrt{1 - \langle x, x' \rangle} \\
0 & \sqrt{1 - \langle x, x' \rangle} & \langle x, x' \rangle
\end{bmatrix} \begin{bmatrix}
U_{S^{\perp}} & U_{S}
\end{bmatrix} \tsp =: U \tilde{R} U\tsp
\end{equation*}
and readily check that $R x = x'$. Furthermore, $R$ is unitarily similar to the rotation matrix $\tilde{R}$, hence $R$ is itself a rotation matrix. Constructing $\tfrac{R + R\tsp}{2}$, we have
\begin{equation*}
\tfrac{R + R\tsp}{2} = \begin{bmatrix}
U_{S^{\perp}} & U_{S}
\end{bmatrix} \begin{bmatrix}
I & 0 & 0 \\
0 & \langle x, x' \rangle & 0 \\
0 & 0 & \langle x, x' \rangle
\end{bmatrix} \begin{bmatrix}
U_{S^{\perp}} & U_{S}
\end{bmatrix} \tsp
\end{equation*}
which is an eigendecomposition of $\tfrac{R+R\tsp}{2}$. Since for all $x, x' \in \mS{m}$ we have $\langle x, x' \rangle \leq 1$, the smallest eigenvalue of $\tfrac{R+R\tsp}{2}$ is $\lambda_{\text{min}}(\tfrac{R+R\tsp}{2}) = \langle x, x' \rangle$. Finally, we have the Rayleigh quotient
\begin{equation*}
\min_{y \in \mS{m}} \langle y, R y \rangle = \min_{y \in \mS{m}} \langle y, \tfrac{R + R \tsp}{2} y \rangle = \lambda_{\text{min}}(\tfrac{R + R\tsp}{2}) = \langle x, x' \rangle.
\end{equation*}
If $x$ and $x'$ are linearly dependent, the argument holds via a limiting process by replacing $x'$ by $\sqrt{1 - \varepsilon^{2}} x' + \varepsilon r$ where $r \in \mS{m}$ can be chosen random, when $\varepsilon \rightarrow 0$. In particular, when $x = x'$, we get $R = I$ and we have $\langle y, R y \rangle = \| y \|^{2} = 1 = \langle x, x' \rangle$ for all $y \in \mSO{m}$. If $x = -x'$, then $R$ can be any rotation such that $R x = x'$ (depending on the choice of $r$), but since $R y \in \mS{m}$ for any $y \in \mS{m}$, we have by Cauchy-Schwartz $\langle y, R y \rangle \geq - \| y \| \| Ry \| \geq -1 = \langle x, x' \rangle$ with equality for $y = x$.
\end{proof}
We prove that this approximant is Lipschitz continuous on $\mS{m}$:
\begin{theorem} \label{thm:polynomial_approximation_is_Lipschitz}
If $\varphi: \mS{m} \rightarrow \bR$ is Lipschitz continuous with $\lip \varphi = L$, then $\varphi_{n} = K_{n} \varphi$ is Lipschitz continuous with $\lip \varphi_{n} = L$.
\end{theorem}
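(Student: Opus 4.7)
The plan is to bound $|\varphi_n(x) - \varphi_n(x')|$ by a weighted integral of increments of $\varphi$ along a single rotation that carries $x$ to $x'$, and then to exploit the tight bound on those increments provided by \cref{lma:GeodesicRotation}.

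First, I would fix arbitrary $x, x' \in \mS{m}$ and invoke \cref{lma:GeodesicRotation} to produce a rotation $r \in \mSO{m+1}$ such that $x' = rx$ and $\min_{y \in \mS{m}} \langle y, ry \rangle = \langle x, x' \rangle$. Next, using that $\mu$ is invariant under the left action of $\mSO{m+1}$, I would rewrite
\begin{equation*}
\varphi_n(x') = \int_{\mS{m}} \varphi(z)\, \kappa_n(\langle rx, z \rangle) \dd\mu(z) = \int_{\mS{m}} \varphi(ry)\, \kappa_n(\langle x, y \rangle) \dd\mu(y),
\end{equation*}
via the change of variables $z = ry$ together with the identity $\langle rx, ry\rangle = \langle x,y\rangle$ for $r \in \mSO{m+1}$. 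Subtracting from \cref{eq:definition_K_operator} yields
\begin{equation*}
\varphi_n(x) - \varphi_n(x') = \int_{\mS{m}} \bigl(\varphi(y) - \varphi(ry)\bigr)\, \kappa_n(\langle x, y \rangle) \dd\mu(y).
\end{equation*}

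Then I would apply the Lipschitz hypothesis together with the fact that the geodesic distance on $\mS{m}$ is $d_g(y, ry) = \arccos(\langle y, ry\rangle)$, to get $|\varphi(y) - \varphi(ry)| \leq L \arccos(\langle y, ry\rangle)$. Because $\arccos$ is decreasing and \cref{lma:GeodesicRotation} guarantees $\langle y, ry \rangle \geq \langle x, x' \rangle$ for every $y \in \mS{m}$, this pointwise bound strengthens to $|\varphi(y) - \varphi(ry)| \leq L \arccos(\langle x, x' \rangle) = L\, d_g(x,x')$, uniformly in $y$.

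Finally, using nonnegativity of $\kappa_n$ on $[-1,1]$ and the normalization \cref{eq:KappaIntegratesToOne}, I would conclude
\begin{equation*}
|\varphi_n(x) - \varphi_n(x')| \leq L\, d_g(x,x') \int_{\mS{m}} \kappa_n(\langle x, y \rangle) \dd\mu(y) = L\, d_g(x,x'),
\end{equation*}
so $\lip \varphi_n \leq L$. The main obstacle, which is exactly what \cref{lma:GeodesicRotation} is designed to overcome, is to ensure that the \emph{single} rotation used to express $\varphi_n(x')$ via $\mu$-invariance does not move any point $y$ further (in geodesic distance) than it moves $x$; without that tight control one would only get a crude bound involving the operator norm of $r-I$. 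The reverse inequality $\lip \varphi_n \geq L$ is not needed here since only $L$-Lipschitz continuity of $\varphi_n$ is used in the subsequent convergence rate argument.
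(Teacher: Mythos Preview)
Your proposal is correct and follows essentially the same approach as the paper: both use the rotation $r$ from \cref{lma:GeodesicRotation} together with the $\mSO{m+1}$-invariance of $\mu$ (equivalently, of the operator $K_n$) to express $\varphi_n(x)-\varphi_n(x')$ as an integral of $\varphi(y)-\varphi(ry)$ against the nonnegative normalized kernel, and then bound $d_g(y,ry)\le d_g(x,x')$ via the lemma. The only cosmetic difference is that the paper phrases the invariance step as $r\cdot K_n\varphi = K_n(r\cdot\varphi)$ and first passes to $\|d_g(r^{-1}\cdot,\cdot)\|_\infty$ before invoking the lemma, whereas you do the change of variables explicitly and apply the pointwise bound directly.
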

\begin{proof}
Denoting the action of $r \in \mSO{m+1}$ on a function $\varphi: ~\mS{m} \rightarrow \bR$ by $r \cdot \varphi(x) := \varphi(r^{-1} x)$, \cite[Proposition 3.1]{RagozinApproximationSpheres} shows that $K_{n}$ is an $\mSO{m+1}$-invariant operator: 
\begin{equation*}
r \cdot K_{n} \varphi = K_{n} (r \cdot \varphi), \quad \forall r \in \mSO{m+1}.
\end{equation*}
For any $x, x' \in \mS{m}$, choose $r \in \mSO{m+1}$ as in \cref{lma:GeodesicRotation}, for which we have $r x = x'$ and that
\begin{equation} \label{eq:lemma_minimum_property}
\min_{z \in \mS{m}} z\tsp r z = \langle x, x' \rangle.
\end{equation}
Then 
\begin{equation*}
\varphi_{n}(x') = [K_{n} \varphi] (x') = [K_{n} \varphi] (r^{-1} x) = r \cdot [K_{n} \varphi] (x) = [K_{n} (r \cdot \varphi)](x),
\end{equation*}
and $\varphi_{n}(x) = [K_{n} \varphi](x)$. By linearity of $K_{n}$, we then obtain
\begin{equation*}
\varphi_{n}(x') - \varphi_{n}(x) = [K_{n} (r \cdot \varphi - \varphi)] (x).
\end{equation*}
Writing $K_{n}$ out in full, we obtain
\begin{align}
|\varphi_{n}(x') - \varphi_{n}(x)| &= \Big| \int_{\mS{m}} \varphi(r^{-1} y) - \varphi(y) \kappa_{n}(\langle x, y \rangle) \dd \mu(y) \Big| \notag \\
&\leq \int_{\mS{m}} |\varphi(r^{-1} y) - \varphi(y)| \kappa_{n}(\langle x, y \rangle) \dd \mu(y) \notag  \\
&\leq L \int_{\mS{m}} d_{g}(r^{-1} y, y) \kappa_{n}(\langle x, y \rangle) \dd \mu(y) \notag  \\
&\leq L \| d_{g} (r^{-1} \cdot, \cdot) \|_{\infty} \int_{\mS{m}} \kappa_{n}(\langle x, y \rangle) \dd \mu(y) \notag  \\
&= L \| d_{g} (r^{-1} \cdot, \cdot) \|_{\infty}. \label{eq:BoundInfinityNorm}
\end{align}
The first inequality follows since $\kappa_{n}(\tau) \geq 0$, for any $\tau \in [-1, +1]$, the second inequality follows from the Lipschitz continuity of $\varphi$. The final equality follows from \cref{eq:KappaIntegratesToOne}.\\
\\
The result is proved if $\| d_{g} (r^{-1} \cdot, \cdot) \|_{\infty} = d_{g}(x, x')$. For any $m$, we have that $d_{g}(x, x') = \arccos(\langle x, x' \rangle)$ for $x, x' \in \mS{m}$. Hence $d_{g}(r^{-1} y, y) = \arccos( y\tsp r\mintsp y)$. Since $\arccos$ is a monotonically decreasing function, we have
\begin{equation} \label{eq:ArccosMaximization}
\max_{y \in \mS{m}} \big\{ \arccos(y\tsp r\mintsp y) \big\}  = \arccos \Big( \min_{y \in \mS{m}} \big\{ y\tsp r\mintsp y \big\} \Big) = \arccos\Big( \min_{z \in \mS{m}} \big\{ z\tsp r z \big\} \Big),
\end{equation}
where the last equality follows from the change of variables $y = r z$. We may write $\min$ and $\max$ as we are optimizing a bounded function on a compact set. By \cref{eq:lemma_minimum_property} together with \cref{eq:ArccosMaximization} this yields
\begin{equation*} 
\| d_{g}(r^{-1} \cdot, \cdot) \|_{\infty} = \max_{y \in \mS{m}} \big\{ \arccos(y\tsp r\mintsp y) \big\}  = \arccos \big( \langle x, x' \rangle \big) = d_{g}(x, x').
\end{equation*}
and using \cref{eq:BoundInfinityNorm} we get
\begin{equation*}
|\varphi_{n}(x') - \varphi_{n}(x)| \leq d_{g}(x, x') \qedhere
\end{equation*}
\end{proof}
We finally write down the proof of \cref{thm:geodesic_approximation}
\begin{proof}[Proof of \cref{thm:geodesic_approximation}]
There exists a maximizer of \cref{eq:local_marginal_polytope_dual_metric} by \cref{thm:duality_metric} which we denote by $\psi \in \lip_{d}(\Omega)^{\cE}$. Then for any $\varphi$, we have
\begin{align*}
- \sigma_{\cP(\Omega)}\big[ \Div_{u} (\psi) - f_{u} \big] &=
\min_{x \in \Omega} \big[ f_{u} - \Div_{u} (\psi) \big] (x) \\
&= \min_{x \in \Omega} f_{u}(x) - \Div_{u} \varphi(x) - \Div_{u} \psi(x) + \Div_{u} \varphi(x) \\
&\leq \min_{x \in \Omega} f_{u}(x) - \Div_{u} \varphi(x) + \| \Div_{u} (\varphi - \psi) \|_{\infty} \\
&\leq \sigma_{\cP(\Omega)}\big[ \Div_{u} (\varphi) - f_{u}  \big] + \| \Div_{u} (\varphi - \psi) \|_{\infty}
\end{align*}
Since we are maximizing over a strictly smaller set in \cref{eq:local_marginal_polytope_dual_metric_discrete} as compared to \cref{eq:local_marginal_polytope_dual_metric}, clearly $\cref{eq:local_marginal_polytope_dual_metric} \geq \cref{eq:local_marginal_polytope_dual_metric_discrete}$. On the other hand we can now bound
\begin{align}
\cref{eq:local_marginal_polytope_dual_metric} - \cref{eq:local_marginal_polytope_dual_metric_discrete} &\leq \sum_{u \in \cV} \| \Div_{u} (\varphi - \psi) \|_{\infty} \notag \\
&\leq \sum_{u \in \cV} d_{u} \sup_{e \in \cE} \| \varphi_{e} - \psi_{e} \|_{\infty} \notag \\
&\leq 2 | \cE | \sup_{e \in \cE} \| \varphi_{e} - \psi_{e} \|_{\infty} \label{eq:sup_norm_bound_dmRD_in_mRD}
\end{align}
with $d_{u}$ denoting the degree of the vertex $u$.\\
\\
By \cref{thm:polynomial_approximation_is_Lipschitz} and \cref{eq:linear_rate_infinity_norm} there exists a sequence of Lipschitz continuous polynomials $\varphi_{n}$ indexed by degree $n$ such that $\| \varphi - \varphi_{n} \|_{\infty} \leq C_{m} \tfrac{L}{n}$, with $\lip \varphi_{n} = \lip \varphi$, for all $n \in \bN$. Now to obtain a lower bound of \cref{eq:local_marginal_polytope_dual_metric_discrete}, choose for every edge $e \in \cE$ that $\varphi_{e}$ is such a polynomial $\varphi_{e} = (\psi_{e})_{n}$ in the sequence approximating $\psi_{e}$. We obtain that
\begin{equation*}
\| \varphi_{e} - \psi_{e} \|_{\infty} \leq C_{m} \tfrac{L}{n},
\end{equation*}
which together with \cref{eq:sup_norm_bound_dmRD_in_mRD} yields
\begin{equation*} \label{bound_dmRD_in_mRD}
\cref{eq:local_marginal_polytope_dual_metric} - \cref{eq:local_marginal_polytope_dual_metric_discrete} \leq 2 |\cE| C_{m} \tfrac{L}{n}. \qedhere
\end{equation*}
\end{proof}

\subsection{Degree $1$ case} \label{ssec:degree_1_case}

In practice of course, we cannot let $n$ tend to infinity, and we are interested in the solutions of the convex relaxation for bounded degree $n$. It is instructive to take a closer look at the case when the optimization problem is restricted to affine multipliers, i.e. when the degree of the relaxation is $n = 1$. It can be shown that this first level of the relaxation hierarchy reduces to a natural convexification of \cref{eq:mrf}. The first part of the following theorem is a specialization of \cite[Theorem 1]{FixAgarwalContinuousGraphicalModel}:
\begin{theorem}[degree $1$ relaxation]
Let the degree $n=1$. Then \hyperref[eq:local_marginal_polytope_dual_discrete]{\normalfont (R-D\textsuperscript{1})} is equivalent to
\begin{equation} \label{eq:degree_1_relaxation_general}
\inf_{x \in (\bR^{M})^{\cV}} \Big\{ \sum_{u \in \cV} f^{**}_{u}(x_{u}) + \sum_{(u,v) \in \cE} f^{**}_{(u, v)}(x_{u}, x_{v}) \Big\},
\end{equation}
where $f_{u}$ and $f_{(u,v)}$ are to be understood as extended real-valued functions on the embedding space, i.e. $f_{u}: \bR^{M} \to \exR$, with $f_{u}(x) = + \infty, ~ x \not\in \Omega$ and finite otherwise, and similarly for $f_{(u,v)}: \bR^{M} \times \bR^{M} \to \exR$. In particular, $\dom(f^{**}_{u}) = \con(\Omega)$.
Furthermore, when $\Omega = \mS{m}$, \hyperref[eq:local_marginal_polytope_dual_metric_discrete]{\normalfont (mR-D\textsuperscript{1})} is equivalent to
\begin{equation} \label{eq:degree_1_relaxation_metric}
\inf_{x \in (\bR^{M})^{\cV}} \Big\{ \sum_{u \in \cV} f^{**}_{u}(x_{u}) + \sum_{(u,v) \in \cE} \| x_{u} - x_{v} \| \Big\},
\end{equation}
where $\dom(f^{**}_{u}) = \mathcal{B}^{m}$, the $m$-dimensional unit ball.
\end{theorem}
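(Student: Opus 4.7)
The plan is to start from the unconstrained reformulation \cref{eq:dual_relax_poly} at $n=1$, parameterize the affine multipliers explicitly, collapse the inner infima into Fenchel conjugates on the embedding space, and then invoke Fenchel--Rockafellar duality to recover the biconjugate formulation. Writing $\gamma_e(x)=\langle a_e,x\rangle + b_e$, $\psi_e(y)=\langle c_e,y\rangle + d_e$, and treating $f_u, f_{(u,v)}$ as extended real-valued on the embedding space, each edge term collapses to $-f_e^{*}(a_e,c_e)-b_e-d_e$ and each vertex term collapses to $-f_u^{*}(\alpha_u)-\beta_u$, where $\alpha_u$ and $\beta_u$ are the linear combinations of the incident $a_e,c_e$ and $b_e,d_e$ respectively dictated by the definition of $A_u$ in \cref{eq:definition_linear_map_A}.

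A telescoping identity over the graph shows that the scalar contributions cancel: $\sum_{u\in\cV}\beta_u+\sum_{e\in\cE}(b_e+d_e)=0$. Hence \hyperref[eq:local_marginal_polytope_dual_discrete]{\normalfont (R-D\textsuperscript{1})} reduces to $-\inf_{a,c}\big[\sum_u f_u^{*}(\alpha_u)+\sum_e f_e^{*}(a_e,c_e)\big]$. I then read off the adjoint $A^{*}(x)=((-x_u,-x_v))_{e=(u,v)}$ of the linear map $(a,c)\mapsto(\alpha_u)_u$ and apply Fenchel--Rockafellar with $h(\alpha)=\sum_u f_u^{*}(\alpha_u)$ and $g(a,c)=\sum_e f_e^{*}(a_e,c_e)$: since $h^{*}(x)=\sum_u f_u^{**}(x_u)$ and $g^{*}(-A^{*}x)=\sum_e f_e^{**}(x_u,x_v)$, the resulting dual is exactly \cref{eq:degree_1_relaxation_general}. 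The identity $\dom f_u^{**}=\con(\Omega)$ is standard for proper lsc functions with compact effective domain.

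For the metric case on $\mS{m}$, the first step is to show that for affine $\varphi_e$ the constraint $\varphi_e\in\lip(\mS{m},d_g)$ is equivalent to $\|a_e\|\leq 1$: the estimate $\|x-y\|\leq d_g(x,y)$ on the sphere yields one direction, while probing along a great circle in the direction of $a_e$ gives the converse via $\|x-y\|/d_g(x,y)\to 1$. Otherwise the derivation is structurally identical, with $\Div$ replacing $A$, adjoint $B^{*}(x)_e=x_v-x_u$, and the support function of the Euclidean unit ball being the Euclidean norm; Fenchel--Rockafellar therefore produces $\sum_{(u,v)\in\cE}\|x_u-x_v\|$ in place of $\sum_{(u,v)\in\cE} f_{(u,v)}^{**}$, giving \cref{eq:degree_1_relaxation_metric}. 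The domain claim $\dom f_u^{**}=\cB^m$ follows from $\cl\con(\mS{m})=\cB^m$.

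The main technical hurdle is the constraint qualification for strong Fenchel--Rockafellar duality, but this is mild here: compactness of $\Omega$ together with lower semicontinuity of $f_u,f_e$ render $f_u^{*}, f_e^{*}$ finite and continuous everywhere on the embedding space, so the usual interiority condition is automatic. The remaining care is mere bookkeeping for directed edges, loops ($u=v$), and possible multi-edges, which is handled consistently by treating $\cE$ as an ordered index set throughout.
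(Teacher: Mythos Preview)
Your proof is correct and follows essentially the same route as the paper: reduce the degree-$1$ dual to linear multipliers (you show the constant parts cancel telescopically, the paper simply notes invariance to constants), recognize the resulting inner minima as Fenchel conjugates on the embedding space, and apply Fenchel--Rockafellar duality with the operator $A$ (resp.\ $\Div$). The only minor stylistic difference is in the Lipschitz-equivalence step on $\mS{m}$: the paper argues via $\|\nabla\varphi\|\geq\|\nabla_{\!g}\varphi\|$ together with the existence of a point where the two gradients coincide, whereas you use the chord-versus-arc inequality $\|x-y\|\leq d_g(x,y)$ and a great-circle limit; both are valid and equally short.
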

\begin{figure}[h!]
 \centering
 \begin{subfigure}[b]{0.45\textwidth}
 \centering
 \includegraphics[width=\textwidth]{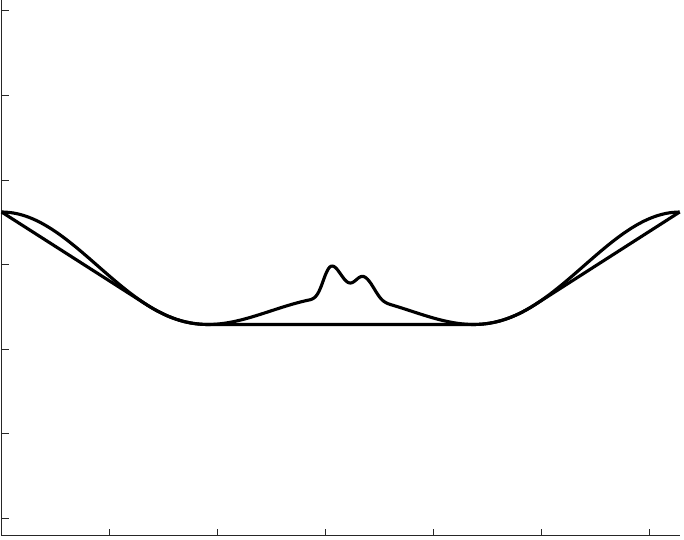}
 \caption{}
 \end{subfigure}
 \begin{subfigure}[b]{0.45\textwidth}
 \centering
 \includegraphics[width=\textwidth]{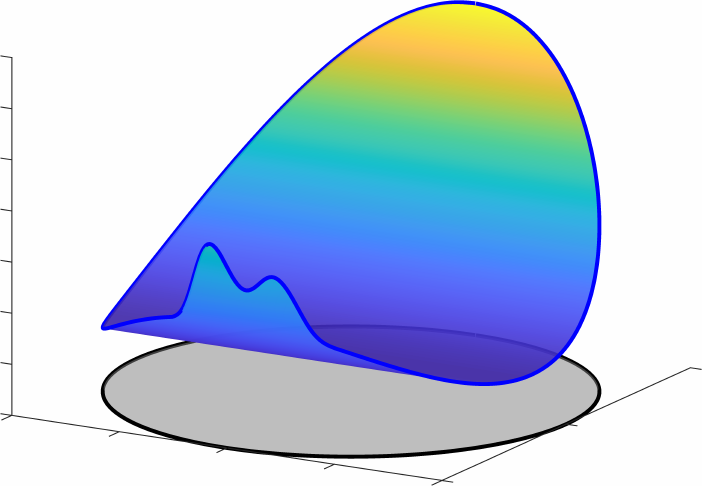}
 \caption{}
 \end{subfigure}
 \caption{Naive convexification via a Euclidean parametrization (a) vs. convex hulls obtained by convexification over the manifold (b). The upper black curve in (a) depicts the graph of the nonconvex periodic cost plotted on $[0, 2\pi)$. The lower black curve corresponds to the graph of its largest convex under-approximation, its convex hull. (b) is obtained by prescribing the same nonconvex periodic cost on the unit-circle (blue curve) and computing its convex hull (colored surface). It can be seen that the proposed convexification (colored surface) preserves cost function when restricted to the manifold (black circle), whereas the naive convexification loses the overall shape of the cost function. This happens at the cost of lifting the one-dimensional domain $[0, 2\pi)$ of the optimization problem to the convex hull of the manifold (gray surface).}
 \label{fig:convexification}
 \end{figure}
\begin{proof}
It can easily be verified that both \cref{eq:local_marginal_polytope_dual_discrete} and \cref{eq:local_marginal_polytope_dual_metric_discrete} are invariant to the constant part of the dual multiplier $\gamma$ and $\psi$ 
(of $\varphi$), so it is natural to further restrict the optimization to linear functionals; i.e. when the constant part(s) of $\gamma$ and $\psi$ (or $\varphi$) equal(s) zero. Denote by $M$ the dimension of the Euclidean embedding space, we restrict the optimization to linear functionals $\gamma, \psi \in \bR^{M}$. Define $F$, $G$ as
\begin{equation*}
F(x) = \sum_{u \in \cV} f_{u}(x_{u}) \quad \text{and} \quad G(x, y) = \sum_{e \in \cE} f_{e}(x_{e}, y_{e})
\end{equation*}
and let $A$ be defined as in \cref{eq:definition_linear_map_A}. We have that
\begin{equation*}
- F^{*} \big(A (\gamma, \psi) \big) = \sum_{u \in \cV} - f^{*}_{u} \big( A_{u} (\gamma, \psi) \big) = \sum_{u \in \cV} \min_{x \in \bR^{M}} f_{u}(x) - \langle A_{u}(\gamma, \psi), x \rangle.
\end{equation*} 
The first equality follows from the properties of the Fenchel conjugate for separable $F$. Similarly for $G^{*}$, we obtain
\begin{equation*}
- G^{*}\big(\gamma,\psi\big) = \sum_{e \in \cE} - f^{*}_{e} (\gamma_{e}, \psi_{e}) = \sum_{e \in \cE} \min_{x, y \in \bR^{M}} f_{e}(x, y) - \langle \gamma_{e}, x \rangle - \langle \psi_{e}, y \rangle
\end{equation*} 
As in the proof for \cref{thm:polynomial_duality_gap_vanishes}, we may rewrite \cref{eq:local_marginal_polytope_dual_discrete} as \cref{eq:dual_relax_poly}, where, in light of the above derivation, restriction to the case $n=1$ yields
\begin{align*}
\text{\hyperref[eq:local_marginal_polytope_dual_discrete]{\normalfont (R-D\textsuperscript{1})}} &= \sup_{\gamma, \psi \in (\bR^{M})^{\cE}} F^{*}\big(A(\gamma,\psi)\big) + G^{*}\big( \gamma,\psi \big) = \inf_{x \in (\bR^{M})^{\cV}} F^{**}(x) + G^{**}(-A^{*}x) \\ &= \inf_{x \in (\bR^{M})^{\cV}} \sum_{u \in \cV} f_{u}^{**}(x) + \sum_{(u, v) \in \cE} f^{**}_{(u, v)}(x_{u}, x_{v})
\end{align*}
where the first equality is an application of Fenchel--Rockafellar duality, and the second is a straightforward rewriting using the definition of the operator $A$. \\
\\
In the case of \cref{eq:local_marginal_polytope_dual_metric_discrete} when $\Omega = \mS{m}$, the dimension of the embedding space is $M = m+1$. We have that any linear functional $\varphi$ which is Lipschitz continuous with $\lip_{(\bR^{M}, d)} \varphi = 1$ on the embedding space is necessarily Lipschitz continuous with $\lip_{(\mS{m}, g)} \varphi = 1$ on $\mS{m}$, since $1 \geq \| \nabla \varphi \| \geq \| \nabla_{g} \varphi \|$. On the other hand, for any linear functional $\varphi$, there exists a $x \in \mS{m}$ such that $\langle \varphi, x \rangle = 0$ where $\nabla_{g} \varphi(x) = \nabla \varphi(x)$. Therefore, for $\varphi: ~\mS{m} \rightarrow \bR$ we have $\iota_{\lip(\mS{m}, g)}(\varphi_e) = \iota_{\lip(\bR^{M}, \| \cdot \|)}(\varphi_e)$ when restricting to $\varphi_{e} \in \bR^{M}$. Defining $F$ as before, we have
\begin{equation*}
-F^{*}(\Div \varphi) = -\sum_{u \in \cV} f_{u}^{*}(\Div_{u} \varphi) = \sum_{u \in \cV} - \sigma_{\cP(\Omega)}(\Div_{u} \varphi - f_u)
\end{equation*}
For the metric case, we have that
\begin{equation*}
-G^{*}(\varphi) = \sum_{e \in \cE} \iota_{\lip(\mS{m}, g)}(\varphi_e) = \sum_{e \in \cE} \iota_{\lip(\bR^{M}, \| \cdot \|)}(\varphi_e)
\end{equation*}
where the last equality holds only in the case $n=1$, when $\varphi_{e} \in \bR^{M}$. Then
\begin{align*}
\text{\hyperref[eq:local_marginal_polytope_dual_metric_discrete]{\normalfont (mR-D\textsuperscript{1})}} &= \sup_{\varphi \in (\bR^{M})^{\cE}} F^{*}\big(\Div \varphi \big) + G^{*}\big( \varphi \big) = \inf_{x \in (\bR^{M})^{\cV}} F^{**}(x) + G^{**}(-\Div^{*}x) \\
&= \inf_{x \in (\bR^{M})^{\cV}} \sum_{u \in \cV} f_{u}^{**}(x) + \sum_{(u, v) \in \cE} \| x_{u} - x_{v} \|.
\end{align*}
where the first equality is an application of Fenchel--Rockafellar duality, and the second is a straightforward rewriting using the definition of the graph divergence operator $\Div$ and that for $y \in \bR^{M}$ one has $\iota_{\lip(\bR^{M}, \| \cdot \|)}^{*}(y) = \iota^{*}_{\cB^{M-1}}(y) = \| y \|$.
\end{proof}
\begin{remark}[relative error degree $1$ case]
Based on this result, it can readily be checked that for two Dirac masses $\delta_{x}$ and $\delta_{y}$, the relative error $\varepsilon$ for the approximation of $\ot_{d_{g}}(\delta_{x}, \delta_{y})$ using a degree $1$ polynomial dual multiplier is given by the explicit formula $\varepsilon = \tfrac{\sin(\theta/2)}{\theta/2}$ when $d_{g}(\delta_{x}, \delta_{y}) = \theta$.
\end{remark}
This clearly identifies the first level of our proposed hierarchy as a simple but \emph{natural} convexification. We argue that this simple convexification is preferable over the naive convexification that one may obtain via a Euclidean parametrization of the manifold $\Omega$.

In \cite[Definition 4.8]{BauermeisterLaudeMollenhoffLiftingLagrangian} the concept of an extremal moment curve is defined. Taking the mapping $\varphi$ in this definition to be the identity map, the unit circle (more generally the unit sphere $\mS{m}$) is an extremal moment curve. Then \cite[Theorem 4.11]{BauermeisterLaudeMollenhoffLiftingLagrangian} proves that any lower semi-continuous cost on $\Omega$ is preserved under our convexification. This is depicted in \cref{fig:convexification}.
\section{Semi-definite program using sum-of-squares certificates} \label{sec:semidefinite}
In this section we finally construct an implementable algorithm to optimize manifold valued MRF problems. We will write our problem as a semi-definite optimization problem where the positive semi-definiteness constraints encode polynomial nonnegativity certificates. For the case of \cref{eq:local_marginal_polytope_dual_metric_discrete}, we rewrite the geodesic Lipschitz continuity condition as a box constraint with respect to the manifold gradient. We then derive a polynomial inequality that is equivalent to this gradient condition, hence implementable in the \emph{semidefinite programming} (SDP) framework. Then we write down the final SDP relaxation hierarchy.
\subsection{Evaluation of the support function}
\label{ssec:SDPdata}
In both the general \cref{eq:local_marginal_polytope_dual_discrete} and metric relaxation \cref{eq:local_marginal_polytope_dual_metric_discrete}, a support function over the space of probability measures needs to be evaluated. This requires optimizing over the space of measures. We assume for the rest of this work that the unary terms are polynomial. For any polynomial $p \in \PE{x}$, we can evaluate the the negative support function 
\begin{equation*}
-\sigma_{\cP(\Omega)}(p) = \min_{\mu \in \cP(\Omega)} \langle p, \mu \rangle = \min_{x \in \Omega} p(x),
\end{equation*}
where the last equality follows from \cref{eq:global_marginal_polytope}, by evaluating the dual problem
\begin{equation*}
\begin{aligned}
\sup_{\zeta \in \bR} \quad & \zeta \\
\text{subject to} \quad& p(x) \geq \zeta \quad \forall x \in \Omega.
\end{aligned}
\end{equation*}
This is a one-dimensional optimization problem, but with a semi-infinite constraint. Inspired by the literature on semidefinite programming relaxations for polynomial optimization, we will relax this to a finite-dimensional \emph{positive semidefinite} (PSD) constraint in \cref{ssec:SDPformulation}.
\subsection{Implementation of the manifold gradient condition}
\label{ssec:SDPregularization}
In order to implement an algorithm to solve the metric relaxation \cref{eq:local_marginal_polytope_dual_metric_discrete}, we require a certificate that ensures Lipschitz continuity of a polynomial. A continuously differentiable function $\varphi \in \cC(\Omega)$ is Lipschitz continuous with respect to the Riemannian metric with $\lip \varphi = L$, if and only if the norm of the manifold gradient of $\varphi$ is bounded by $L$ \cite[Proposition 10.43]{boumal2023intromanifolds}; also see \cite{lellmann2013total}:
\begin{equation} \label{eq:gradient_condition}
\| \gnabla \varphi_{n} \| \leq L,
\end{equation}
where $\gnabla$ is used to denote the manifold gradient.\\
\\
Let $m$ and $M$ be the dimensions of the Riemannian submanifold $\Omega$ and its Euclidean embedding space $\bE$, respectively. There exists an operator $\Pi(x) \in \bR^{M \times M}$, which we assume to be elementwise polynomial, that projects a vector $v \in \bE$ at the point $x \in \Omega$ to the tangent vector $\Pi(x) v \in \TM \Omega(x)$, albeit expressed in the basis of the Euclidean embedding space. Let $\varphi \in \cC(\Omega)$, then the tangent vector $\Pi(x) \nabla \varphi(x)$ equals the manifold gradient of $\varphi$ at $x \in \Omega$. Since $\Pi(x)$ is an orthogonal projection matrix, it is symmetric, idempotent and its eigenvalues are all either $0$ or $1$. If there exists an elementwise polynomial $J(x) \in \bR^{M \times m}$, such that the spectral decomposition of $\Pi(x)$ can be written
\begin{equation*}
\Pi(x) = J(x) J^{*}(x) \quad \forall x \in \Omega,
\end{equation*}
then the columns of $J(x)$ form an orthonormal basis of $\TM \Omega(x)$. Equivalently, for any $v, w \in \bR^{M}$
\begin{equation} \label{eq:projection_matrix_and_Orthonormal_basis}
\langle \Pi(x) v, \Pi(x) w \rangle = \langle \Pi(x) v, w \rangle = \langle J^{*}(x) v, J^{*}(x) w \rangle,
\end{equation}
where we have used the symmetry and idempotency of $\Pi(x)$. This shows that the metric tensor in the inner product between vectors $J^{*}(x)v$ and $J^{*}(x)w$ is the identity, hence $J(x)$ maps vectors $v, w \in \bE$ into an $m$-dimensional vector space with an orthonormal basis. This means in particular that $J^{*}(x)v$ represents $\Pi(x) v$ in an orthonormal basis of $\TM \Omega(x)$.
\begin{remark}[hairy ball theorem]
No such $J(x)$ can exist in the case that $\Omega = \mS{2m}$ for $m \in \bN$; otherwise, one could take any column $j(x)$ of $J(x)$, which would be a polynomial (hence continuous) vector field everywhere tangent to $\mS{2m}$ with $\|j(x)\| = 1$. By the hairy ball theorem, no such nonvanishing continuous tangent vector field can exist.
\end{remark}
In this case however, there may still exist a $J(x) \in \bR^{M \times p}$ for some $p \geq m$, and \cref{eq:projection_matrix_and_Orthonormal_basis} still holds. Furthermore, if $v \in \TM\Omega(x)$ (expressed in the basis of $\bE$), then $v = \Pi(x) v$ and
\begin{equation*}
\| v \|^{2} = \langle v, \Pi(x) v \rangle = \| J^{*}(x) v \|^{2}
\end{equation*}
regardless of whether $p = m$ or not.
\\
\\
Assuming that an elementwise polynomial $J(x) \in \bR^{M \times p}$ exists such that $\Pi(x) = J(x) J^{*}(x)$, then the gradient condition can be implemented via the dual norm. Since
\begin{equation*}
\| \gnabla \varphi(x) \| = \| J^{*}(x) \nabla \varphi(x) \|,
\end{equation*}
(which comes from \cref{eq:projection_matrix_and_Orthonormal_basis} when $v = w = \nabla \varphi$) we have via dualization that
\begin{equation*}
\sup_{\xi \in \bR^{m}} \langle \xi, J^{*}(x) \nabla \varphi(x) \rangle \leq L
\end{equation*}
is equivalent to \cref{eq:gradient_condition}. The gradient condition is met whenever
\begin{equation} \label{eq:manifold_gradient_inequality}
p(x, \xi) = L - \langle \xi, J^{*}(x) \nabla \varphi(x) \rangle \geq 0, \quad \forall \xi \in \mS{p}. 
\end{equation}
This can be implemented as a polynomial inequality in both the variables $x$ and $\xi$, and the coefficient vector of $p$ is linearly dependent on that of $\varphi$. The main advantages of using the matrix $J^{*}(x)$ rather than $\Pi(x)$ in \cref{eq:manifold_gradient_inequality} is that the degree of the polynomial $J^{*}(x) \nabla \varphi(x)$ will be lower than that of $\Pi(x) \nabla \varphi(x)$, as well as requiring a $p$-dimensional dual variable $\xi$, rather than an $M$-dimensional one, noting that $p \leq M$. Both of these facts will result in smaller PSD constraints in our final implementation.
\subsection{Semi-definite problem formulation}
\label{ssec:SDPformulation}
Using the ideas from \cref{ssec:SDPdata} and \cref{ssec:SDPregularization}, the infinite dimensional problems \cref{eq:local_marginal_polytope_dual_discrete} and \cref{eq:local_marginal_polytope_dual_metric_discrete} can be reframed as finite-dimensional optimization problems with semi-infinite polynomial positivity constraints. It is already a classical idea to rephrase polynomial optimization problems, using various Positivstellens\"{a}tze \cite{PutinarPositivstellensatz, StenglePositivstellensatz, KrivinePositivstellensatz}, into large optimization hierarchies \cite{HenrionLasserreBook, BlekhermanParrilloBook}. The idea is to replace polynomial positivity constraints (which are generally intractable to verify) by a positivity certificate that is easy to compute. Following these works, we use Putinar's Postivstellensatz to relax the semi-infinite constraints as the intersection of a semidefinite cone and an affine space. The resulting problem is a finite-dimensional convex semidefinite programming problem. We do not discuss the details of these ideas here, but rather refer to the relevant literature, e.g. the references \cite{HenrionLasserreBook, BlekhermanParrilloBook}. We simply add the following remarks:
\begin{remark}[ball constraint]
Often the additional inequality constraint $q_{I+1}(x) = R^{2} - \|x\|^{2} \leq 0$ for some $R>0$ is introduced in the polynomial hierarchy in order to ensure tightness of the relaxation. The defining equation $\|x\|^2 = 1$ of the unit sphere renders this additional PSD constraint trivially true for any $R \geq 1$, such that it does not need to be implemented.
\end{remark}
\begin{remark}[coordinate ring]
It is possible to reduce the size of the problem by working in the coordinate ring of the maximal ideal that defines the manifold $\Omega$ as its variety. Especially for $m$-dimensional manifolds embedded in an $M$-dimensional space where $m \ll M$ this approach leads to a dramatic reduction. Even for the case $\mS{M-1}$ we consider in this work, however, this dimensionality reduction is significant for dual multipliers with higher degrees.
\end{remark}
\begin{remark}[semidefinite relaxation and moments]
\label{rmk:pseudomomentvectors}
The approach we have taken in this work follows the style of \cite{BlekhermanParrilloBook}. Dual to this approach is the \emph{Lasserre moment hierarchy} \cite{HenrionLasserreBook}, in which one optimizes over (pseudo)moment vectors. In our implementation of this relaxation we compute these (pseudo)moment vectors in order to extract a feasible primal point, cf. \cref{ssec:Rounding}.
\end{remark}
One can prove that the semi-definite program that results from applying the polynomial optimization hierarchy to solve the problems \cref{eq:local_marginal_polytope_dual_discrete} and \cref{eq:local_marginal_polytope_dual_metric_discrete} converges to \cref{eq:local_marginal_polytope} as the degree $n$ of the relaxation and the degrees of the SOS-multipliers are increased.
\section{Experimental evaluation} \label{sec:experiments}

\subsection{Setup and rouding scheme}
\label{ssec:Rounding}
In the subsequent sections, we use our semidefinite relaxation in order to solve some denoising problems with geodesic regularization, an inpainting problem and the pose graph optimization problem. As mentioned in \cref{rmk:pseudomomentvectors}, the optimizers obtained by our relaxation are (pesudo)moment vectors. The degree $1$ moments are extracted and projected onto $\Omega$ to obtain a point $x^{*}$ which is feasible for the primal problem. The duality gap $G$ is computed as the difference $G = P - D$, where $P$ is the cost obtained by $x^{*}$ for the unlifted problem, and $D$ is the optimal value of our semidefinite relaxation. The relative duality gap $G_{R}$ is defined as the ratio between the duality gap and the primal cost: $G_{R} = \tfrac{G}{P}$.

\subsection{$\mS{1}$ denoising problem}
\label{ssec:exp_GD_1}

\begin{figure}[t!]
\centering
\begin{subfigure}[b]{0.3\textwidth}
\centering
\includegraphics[width=\textwidth]{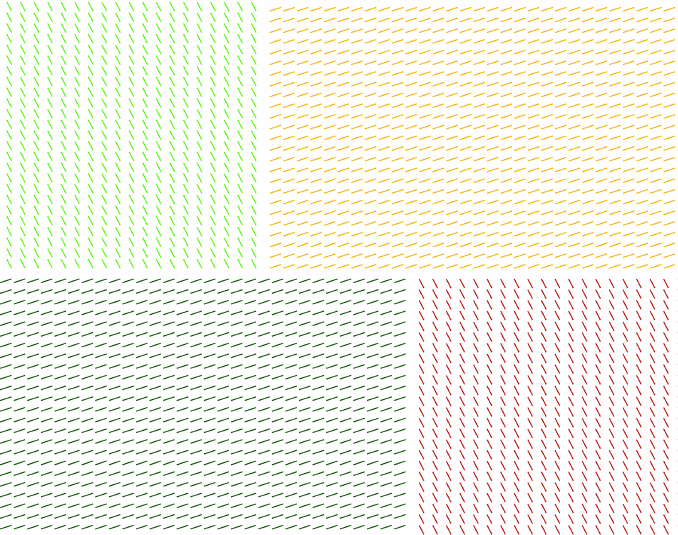}
\end{subfigure}
\begin{subfigure}[b]{0.3\textwidth}
\centering
\includegraphics[width=\textwidth]{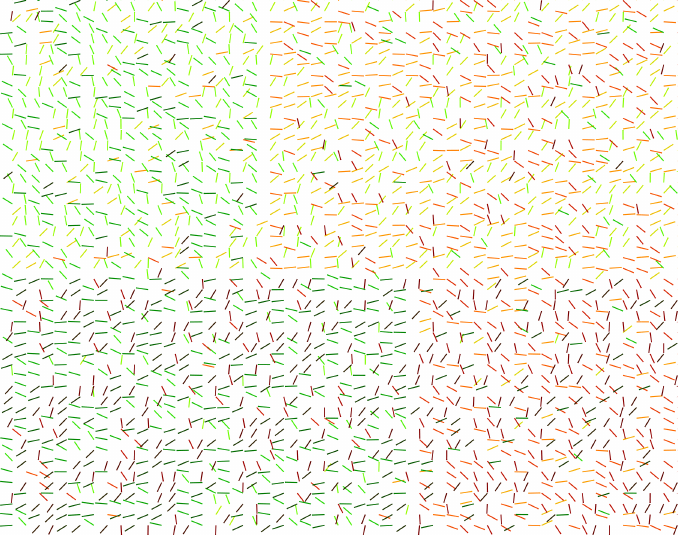}
\end{subfigure}
\begin{subfigure}[b]{0.3\textwidth}
\centering
\includegraphics[width=\textwidth]{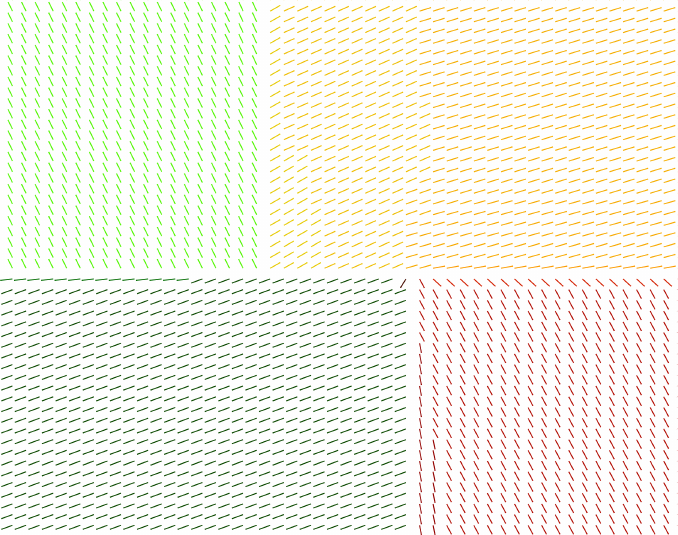}
\end{subfigure}
\begin{subfigure}[b]{0.3\textwidth}
\centering
\includegraphics[width=\textwidth]{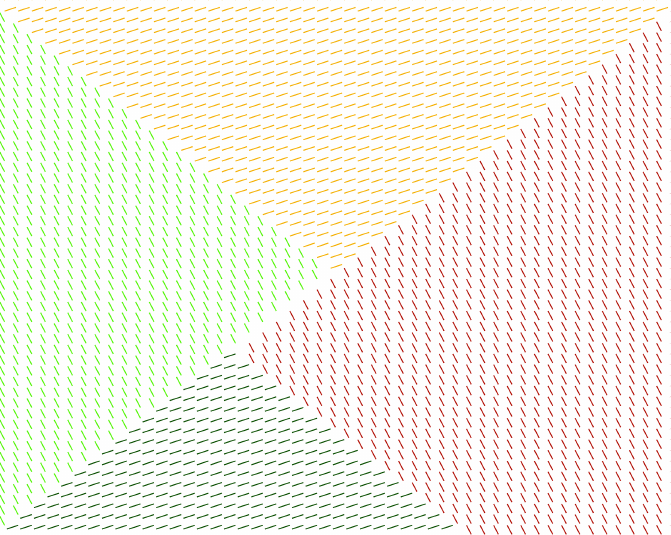}
\caption{}
\end{subfigure}
\begin{subfigure}[b]{0.3\textwidth}
\centering
\includegraphics[width=\textwidth]{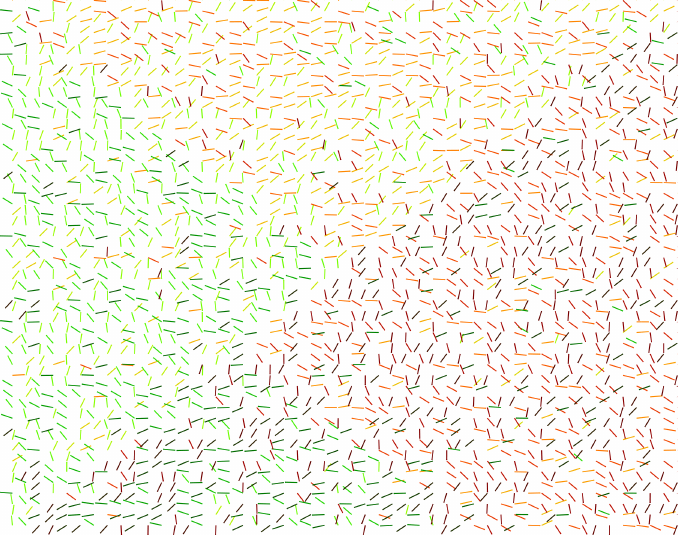}
\caption{}
\end{subfigure}
\begin{subfigure}[b]{0.3\textwidth}
\centering
\includegraphics[width=\textwidth]{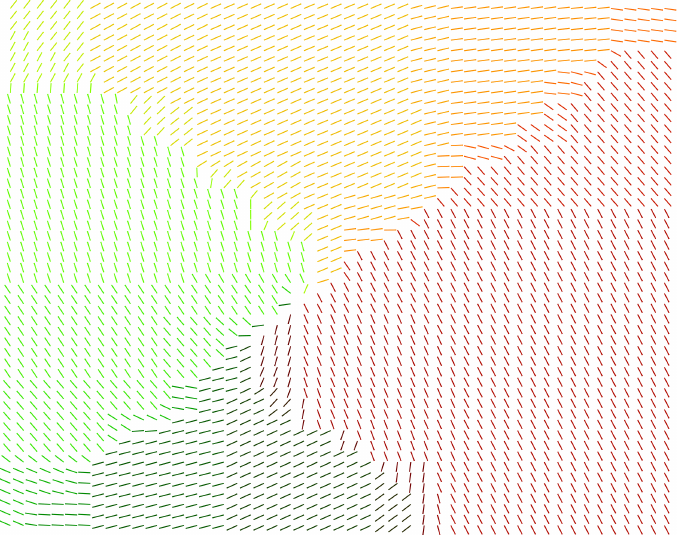}
\caption{}
\end{subfigure}
\caption{Two $\mS{1}$-valued denoising problems (horizontal/vertical and diagonal borders). The ground truth (a) is perturbed by additive Gaussian noise and projected back onto $\mS{1}$ to obtain (b). It is then denoised using our convex relaxation to obtain the denoised (c).}
\label{fig:exp_TOY_denoising}
\end{figure}

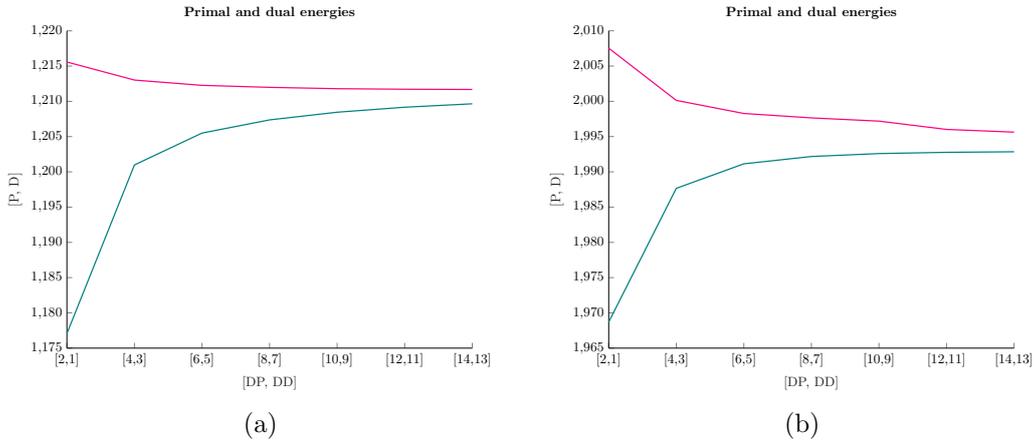
\begin{figure}[t!]
\centering
\begin{subfigure}[b]{0.45\textwidth}
\resizebox {\textwidth} {!} {
%
%
\definecolor{mycolor1}{rgb}{1.00000,0.00000,0.50000}%
\definecolor{mycolor2}{rgb}{0.00000,0.50000,0.50000}%
\begin{tikzpicture}

\begin{axis}[%
width=4.521in,
height=3.566in,
at={(0.758in,0.481in)},
scale only axis,
xmin=1,
xmax=7,
xtick={1,2,3,4,5,6,7},
xticklabels={{[2,1]},{[4,3]},{[6,5]},{[8,7]},{[10,9]},{[12,11]},{[14,13]}},
xlabel style={font=\color{white!15!black}},
xlabel={[DP, DD]},
ymin=1175,
ymax=1220,
ylabel style={font=\color{white!15!black}},
ylabel={[P, D]},
axis background/.style={fill=white},
title style={font=\bfseries},
title={Primal and dual energies},
axis x line*=bottom,
axis y line*=left
]
\addplot [color=mycolor1, line width=1.0pt, forget plot]
  table[row sep=crcr]{%
1	1215.57694320791\\
2	1213.01560783267\\
3	1212.26749446578\\
4	1211.98223175325\\
5	1211.79397856748\\
6	1211.71496840485\\
7	1211.68250197372\\
};
\addplot [color=mycolor2, line width=1.0pt, forget plot]
  table[row sep=crcr]{%
1	1177.06972430263\\
2	1200.98343661496\\
3	1205.49951298774\\
4	1207.36933409947\\
5	1208.45578133141\\
6	1209.16697921574\\
7	1209.65034347859\\
};
\end{axis}

\begin{axis}[%
width=5.833in,
height=4.375in,
at={(0in,0in)},
scale only axis,
xmin=0,
xmax=1,
ymin=0,
ymax=1,
axis line style={draw=none},
ticks=none,
axis x line*=bottom,
axis y line*=left
]
\end{axis}
\end{tikzpicture}%
}
\caption{}
\end{subfigure}
\centering
\begin{subfigure}[b]{0.45\textwidth}
\resizebox {\textwidth} {!} {
%
%
\definecolor{mycolor1}{rgb}{1.00000,0.00000,0.50000}%
\definecolor{mycolor2}{rgb}{0.00000,0.50000,0.50000}%
\begin{tikzpicture}

\begin{axis}[%
width=4.521in,
height=3.566in,
at={(0.758in,0.481in)},
scale only axis,
xmin=1,
xmax=7,
xtick={1,2,3,4,5,6,7},
xticklabels={{[2,1]},{[4,3]},{[6,5]},{[8,7]},{[10,9]},{[12,11]},{[14,13]}},
xlabel style={font=\color{white!15!black}},
xlabel={[DP, DD]},
ymin=1965,
ymax=2010,
ylabel style={font=\color{white!15!black}},
ylabel={[P, D]},
axis background/.style={fill=white},
title style={font=\bfseries},
title={Primal and dual energies},
axis x line*=bottom,
axis y line*=left
]
\addplot [color=mycolor1, line width=1.0pt, forget plot]
  table[row sep=crcr]{%
1	2007.53776686569\\
2	2000.13289575322\\
3	1998.27842311619\\
4	1997.65066928545\\
5	1997.19450808206\\
6	1996.01817833309\\
7	1995.63119387394\\
};
\addplot [color=mycolor2, line width=1.0pt, forget plot]
  table[row sep=crcr]{%
1	1968.69354813277\\
2	1987.66363405519\\
3	1991.13763830334\\
4	1992.17687131155\\
5	1992.592416876\\
6	1992.76941623547\\
7	1992.84825996464\\
};
\end{axis}

\begin{axis}[%
width=5.833in,
height=4.375in,
at={(0in,0in)},
scale only axis,
xmin=0,
xmax=1,
ymin=0,
ymax=1,
axis line style={draw=none},
ticks=none,
axis x line*=bottom,
axis y line*=left
]
\end{axis}
\end{tikzpicture}%
}
\caption{}
\end{subfigure}
\caption{The gap between the primal and dual energies for increasing degrees of the relaxation for a circle-valued denoising problem (a) horizontal/vertical borders (b) diagonal borders. The tuple $[DP, DD]$ on the horizontal axis are the degrees at which the primal (pseudo)moment vectors and the dual polynomial multipliers are truncated, respectively.}
\label{fig:exp_TOY_PD_energies}
\end{figure}

In the following experiments, we consider the geodesic denoising problem:
\begin{equation}
\label{eq:geodesic_denoising_cost}
T_{\text{denoised}} = \argmin_{Y \in (\Omega)^{\cV}} \sum_{u \in \cV} \tfrac{(T_{u} - Y_{u})^{2}}{2} + L \sum_{(u,v) \in \cE} d_{g}(Y_{u}, Y_{v})
\end{equation}
where $Y$ is used to denote the given dataset. For this experiment we have $\Omega = \mS{1}$.\\
\\
We construct a toy dataset which consists of a $\mS{1}$-valued image with piecewise constant regions seperated by straight borders. In contrast to the continuous models studied in e.g. \cite{LellmannSchorrContinuousLabeling}, using the graph-based MRF-model of the image as studied here leads to anisotropic denoising \cite{BoykovKolmogorovGeodesics}. This is made clear by considering the results of the experiment with horizontal/vertical borders, for which the denoising result is very close to the ground truth, versus the experiment with diagonal borders, where a clear staircasing effect occurs at the borders. We plot the noiseless, noisy and denoised dataset in \cref{fig:exp_TOY_denoising}, as well as the duality gap for various degrees of the relaxation in \cref{fig:exp_TOY_PD_energies}.

\subsection{Vesuvius SAR denoising problem}
\label{ssec:exp_GD_2}
\begin{figure}[t!]
\centering
\begin{subfigure}[b]{0.23\textwidth}
\centering
\includegraphics[width=\textwidth]{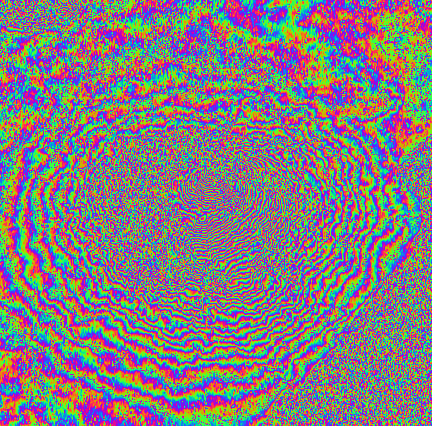}
\caption{}
\end{subfigure}
\begin{subfigure}[b]{0.23\textwidth}
\centering
\includegraphics[width=\textwidth]{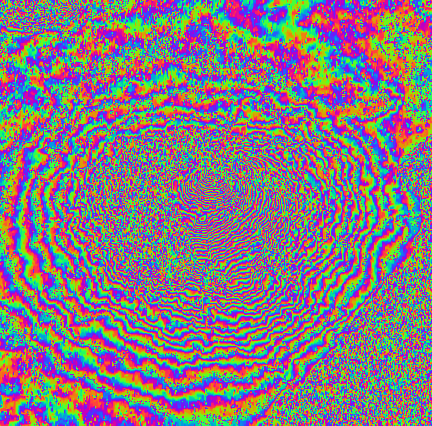}
\caption{}
\end{subfigure}
\begin{subfigure}[b]{0.23\textwidth}
\centering
\includegraphics[width=\textwidth]{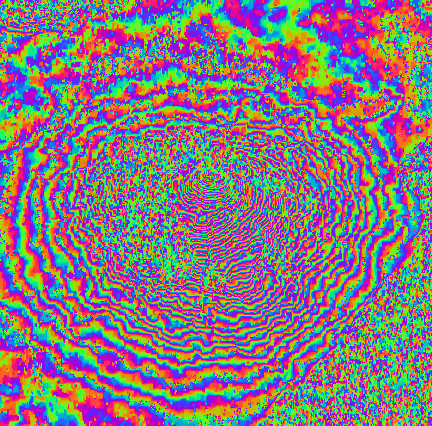}
\caption{}
\end{subfigure}
\begin{subfigure}[b]{0.23\textwidth}
\centering
\includegraphics[width=\textwidth]{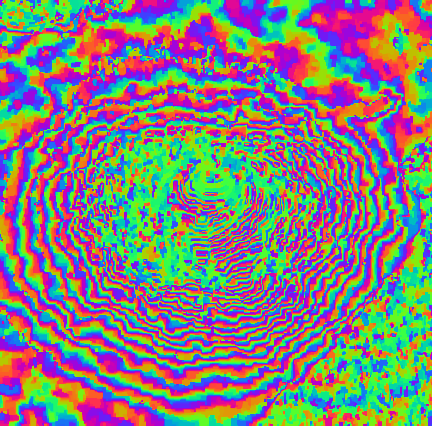}
\caption{}
\end{subfigure}
\caption{Denoising of Vesuvius INSAR data, dataset from \cite{RoccaPratiFerrettiSAR}: input (a), denoising with geodesic regularization parameter $L=0.1$ (b), $L=0.25$ (c) and $L=0.5$ (d).}
\label{fig:exp_SAR_denoising}
\end{figure}

\begin{table}
\centering
\begin{tabular}{ |c|c|c|c|c| } 
 \hline
  & primal ($P$) & dual ($D$) & gap ($G$) & relative gap ($G_{R}$) \\ \hline
 $L = 0.1$ & $2442.2533$ & $2350.37250$ & $918.80809$ & $3.7621 \times 10^{-2}$ \\
 $L = 0.25$ & $5249.2556$ & $5111.4064$ & $137.8491$ & $2.6261 \times 10^{-2}$ \\
 $L = 0.5$ & $7911.2901$ & $7826.6684$ & $846.2167$ & $1.0696 \times 10^{-2}$\\ \hline
\end{tabular}
\caption{Primal and dual energies for the Vesuvius denoising problems. The experiments were carried out using degree $3$ polynomial dual variables.}
\label{tab:exp_SAR_denoising}
\end{table}

In InSAR (Interferometric Synthetic Aperture Radar) imaging, information about the deformation of a terrain is obtained in the form of phase measurements that reflect the phase shift between outgoing and returning radar pulses. Phase measurements are naturally interpreted as elements of $\mS{1}$, which makes denoising of an InSAR image a natural application of our framework. The dataset is denoised for various levels of geodesic regularization, using the cost function given in \cref{eq:geodesic_denoising_cost}, with the results shown in \cref{fig:exp_SAR_denoising} and \cref{tab:exp_SAR_denoising}. The degree of the dual multipliers in our relaxation (in accordance to the dual viewpoint \cref{eq:local_marginal_polytope_dual_metric_discrete}) was restriced to $3$.

\subsection{ROF-denoising and inpainting problem}
\label{ssec:exp_GD_3}
\begin{figure}[t!]
\centering
\begin{subfigure}[b]{0.3\textwidth}
\centering
\includegraphics[width=\textwidth]{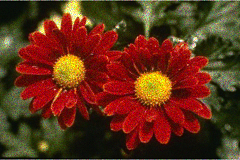}
\end{subfigure}
\begin{subfigure}[b]{0.3\textwidth}
\centering
\includegraphics[width=\textwidth]{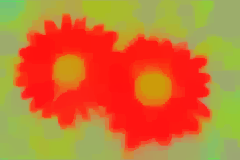}
\end{subfigure}
\begin{subfigure}[b]{0.3\textwidth}
\centering
\includegraphics[width=\textwidth]{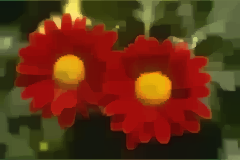}
\end{subfigure}
\\
\begin{subfigure}[b]{0.3\textwidth}
\centering
\includegraphics[width=\textwidth]{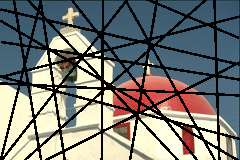}
\caption{}
\end{subfigure}
\begin{subfigure}[b]{0.3\textwidth}
\centering
\includegraphics[width=\textwidth]{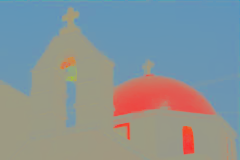}
\caption{}
\end{subfigure}
\begin{subfigure}[b]{0.3\textwidth}
\centering
\includegraphics[width=\textwidth]{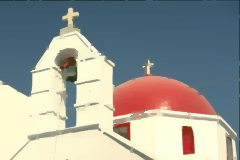}
\caption{}
\end{subfigure}
\caption{ROF-Denoising (row 1) and inpainting (row 2) of an image by optimization over intensity and chromaticity separately: input data (a), optimal chromaticity (b) and output (c). The images were taken from \cite{MartinFTM01}.}
\label{fig:exp_ROF_denoising}
\end{figure}

The well-known TV-denoising or \emph{Rudin-Osher-Fatemi} (ROF) model, introduced in \cite{RudinOsherFatemiDenoising}, is a classical method for denoising images. This model is exactly that given in \cref{eq:geodesic_denoising_cost}, when $\Omega = \bR^{3}$, and $d(x, y) = \|x - y\|$. In \cite{KangMarchROFdenoising} a similar method is proposed, where rather than treating the pixels of an image $T$ as elements in $\bR^{3}$, every pixel $x$ is decomposed into its intensity $i_{x}$ and chromaticity $c_{x}$ as $(i_{x}, c_{x}) = (|x|, \tfrac{x}{|x|}) \in \bR \times \mS{2}$. In essence, one is solving \cref{eq:geodesic_denoising_cost} for the intensity and chromaticity separately, where $\Omega = \bR$ and $\Omega = \mS{2}$, respectively. The former can be computed using any number of convex optimization techniques; we use our framework in order to compute the latter. This model can also be used to solve the inpainting problem, where for some pixels the image is corrupted. We again solve \cref{eq:geodesic_denoising_cost}, where for the missing pixels the dateterm is set to zero. For both experiments, the degree of the dual multipliers (in the dual formulation \cref{eq:local_marginal_polytope_dual_metric_discrete}) was restricted to $1$, which means we are implicitely solving the convexification \cref{eq:degree_1_relaxation_metric} studied in \cref{ssec:degree_1_case}. The noisy input, the denoised chromaticity and the denoised output images are shown in \cref{fig:exp_ROF_denoising}, and the primal and dual energies are shown in \cref{tab:exp_ROF_denoising}.
\begin{table}
\centering
\begin{tabular}{ |c|c|c|c|c| } 
 \hline
  & primal ($P$) & dual ($D$) & gap ($G$) & relative gap ($G_{R}$) \\ \hline
 Flowers & $766.2644$ & $766.1632$ & $0.1012$ & $1.3210 \times 10^{-4}$ \\
 Rooftop & $44.21310$ & $44.15262$ & $0.060486$ & $1.3681 \times 10^{-3}$ \\ \hline
\end{tabular}
\caption{Primal and dual energies for the image denoising problems. The experiments were carried out for the degree $1$-relaxation, as studied in \cref{ssec:degree_1_case}.}
\label{tab:exp_ROF_denoising}
\end{table}

\subsection{Pose graph optimization}
\label{ssec:exp_PGO}
\begin{figure}[t!]
\centering
\begin{subfigure}[b]{0.2\textwidth}
\centering
\includegraphics[width=\textwidth]{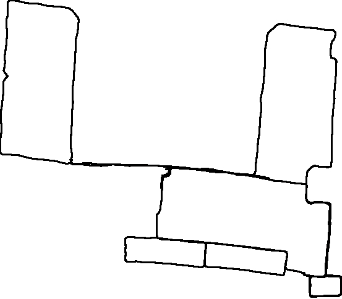}
\end{subfigure}
\hfill
\begin{subfigure}[b]{0.2\textwidth}
\centering
\includegraphics[width=\textwidth]{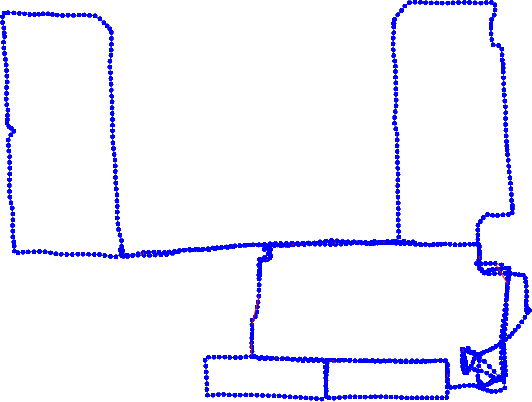}
\end{subfigure}
\hfill
\begin{subfigure}[b]{0.2\textwidth}
\centering
\includegraphics[width=\textwidth]{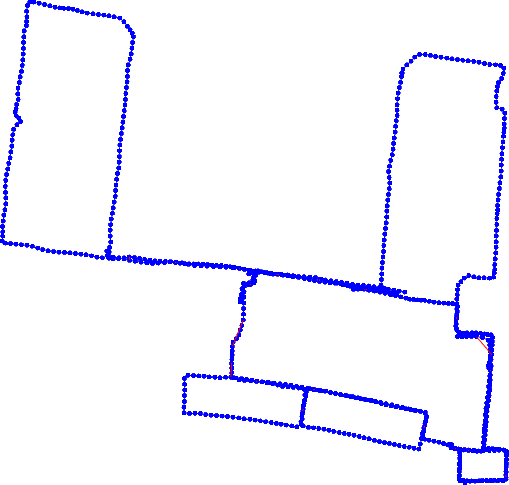}
\end{subfigure}
\hfill
\begin{subfigure}[b]{0.2\textwidth}
\centering
\includegraphics[width=\textwidth]{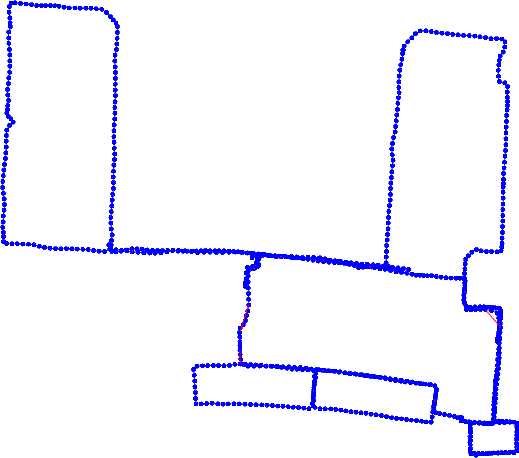}
\end{subfigure}
\\
\begin{subfigure}[b]{0.2\textwidth}
\centering
\includegraphics[width=\textwidth]{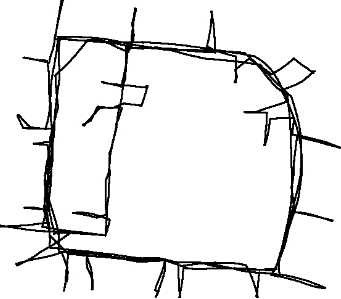}
\end{subfigure}
\hfill
\begin{subfigure}[b]{0.2\textwidth}
\centering
\includegraphics[width=\textwidth]{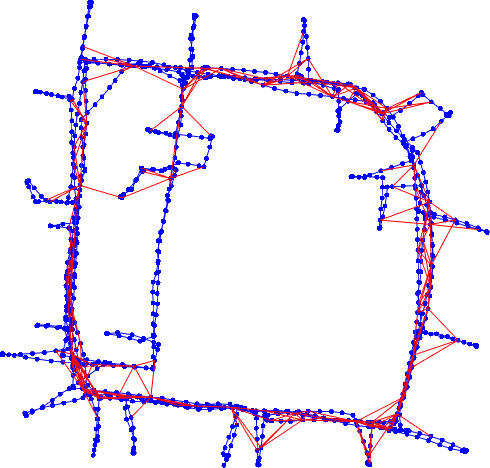}
\end{subfigure}
\hfill
\begin{subfigure}[b]{0.2\textwidth}
\centering
\includegraphics[width=\textwidth]{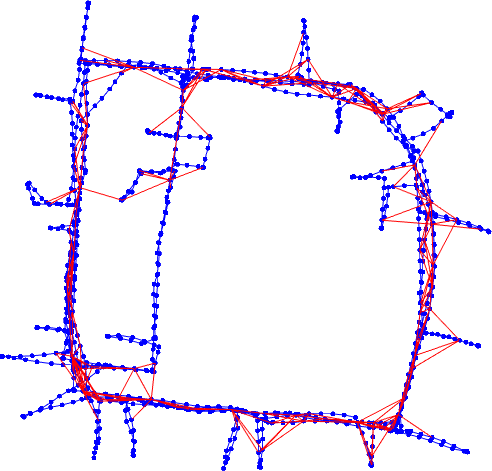}
\end{subfigure}
\hfill
\begin{subfigure}[b]{0.2\textwidth}
\centering
\includegraphics[width=\textwidth]{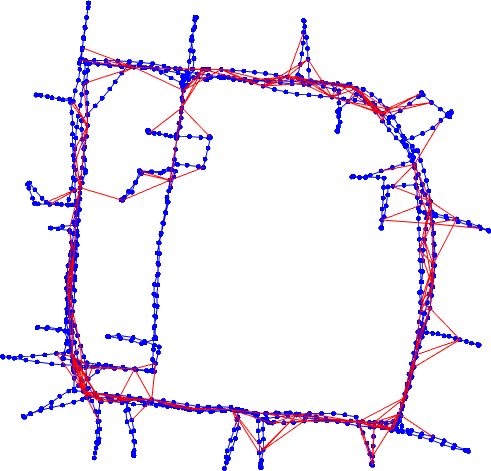}
\end{subfigure}
\\
\begin{subfigure}[b]{0.2\textwidth}
\centering
\includegraphics[width=\textwidth]{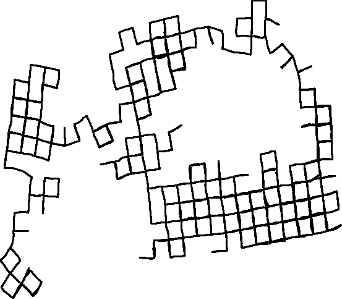}
\caption{}
\end{subfigure}
\hfill
\begin{subfigure}[b]{0.2\textwidth}
\centering
\includegraphics[width=\textwidth]{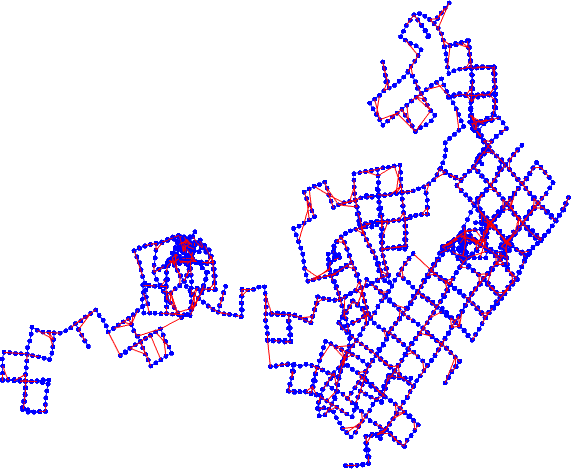}
\caption{}
\end{subfigure}
\hfill
\begin{subfigure}[b]{0.2\textwidth}
\centering
\includegraphics[width=\textwidth]{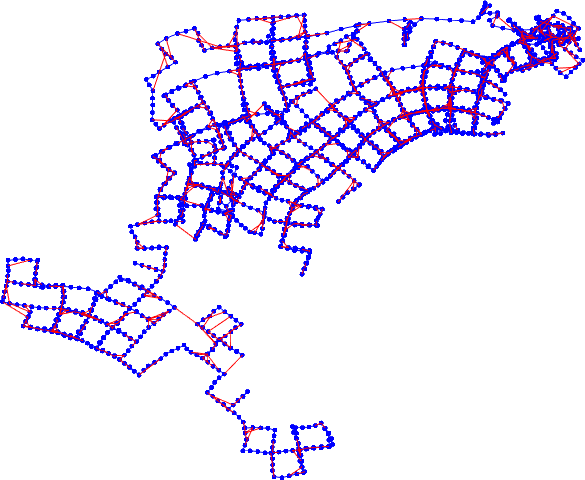}
\caption{}
\end{subfigure}
\hfill
\begin{subfigure}[b]{0.2\textwidth}
\centering
\includegraphics[width=\textwidth]{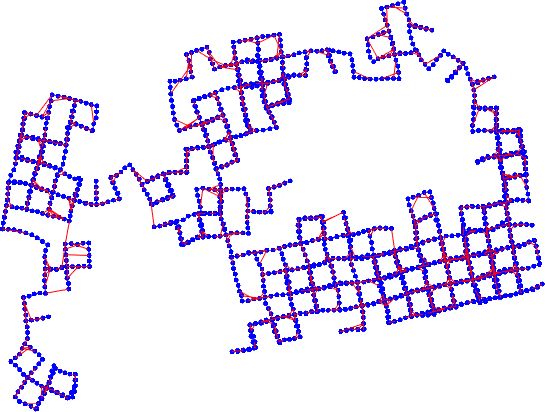}
\caption{}
\end{subfigure}
\caption{Results for the pose graph optimization problem: ground truth (a), local method (b), ours with least squares rounding (c), ours after local optimization with ManOpt \cite{manopt} (d). The dataset was taken from \cite{CarloneIntegerLattice}.}
\label{fig:exp_PGO_comparison}
\end{figure}

In \emph{pose graph optimization} (PGO), an agent travels along an unknown path, which is modeled by a sequence of positions the agent has visited. These positions along with the direction the agent was pointing towards at that positions (encoded by a rotation from a reference direction) are named poses, and these form the vertices of the underlying graph for this problem. The agent keeps track of (noisy) measurements of its relative rotations $r$ and displacements $x$ between two subsequent poses. These measurements are naturally elements of the special Euclidean group: $(x, r) \in \mSE{2} \cong \bR^{2} \times \mSO{2}$. Two vertices are connected whenever they are subsequent positions in the path, but also whenever the agent has returned to a previous position, in which case the edge is called a loop closure. The goal of PGO is to estimate the true path of travel based only on noisy relative measurements and the loop closures. The cost function for PGO is \cite{GrisettiKummerleGraphBasedSlam}
\begin{equation} \label{eq:PGO_cost_function}
\min_{(y, q) \in (\mSE{2})^{\cV}} \sum_{(u, v) \in \cE} \tfrac{\kappa_{e}}{2} \big\| y_{v} - y_{u} - q_{u} x_{(u, v)} \big\|^{2}_{2} + \tfrac{\lambda_{e}}{2} \big\| q_{v} - q_{u} r_{(u, v)} \big\|^{2}_{2}
\end{equation}
where $(x, r) \in (\mSE{2})^{\cE}$ contains the noisy relative measurements between positions, including those estimated for the loop closure edges. The solution $(y, q) \in (\mSE{2})^{\cV}$ contains the absolute poses.\\
\\
Semidefinite relaxations for the PGO problem have been studied widely, including from the viewpoint of sparse polynomial programming \cite{CarloneConvexRelaxationsPGO, YangGPO, mangelson2019guaranteed}. We solve the PGO problem using our relaxation, using a polynomial of degree $2$ as dual variable, and either use a least squares optimization step to update the positions while keeping the angles fixed (LSQ rounding), or initialize a local Riemannian optimization method with our solution (Manopt \cite{manopt}) rounding. The results are found in \cref{fig:exp_PGO_comparison}. While a local method is required to refine the solution for the PGO problem, we remark that for the rotation estimation problem (which is obtained by fixing the poses $y$ and optimizing only over the rotations $q$ in \cref{eq:PGO_cost_function}) our relaxation is empirically tight in low-noise regimes even at degree $2$. The appeal of our hierarchy for these problems is that leads to smaller optimization problems than existing semidefinite relaxations.

%

\section{Conclusion}
In this paper we have considered an efficient moment relaxation for optimization problems with graphical structure and spherical constraints. Leveraging optimal transport duality we derive a tractable dual subspace approximation of the infinite-dimensional problem which allows us to tackle possibly nonpolynomial optimization problems with manifold constraints and geodesic coupling terms. We showed that the duality gap vanishes in the limit by proving that a Lipschitz continuous dual multiplier on a unit sphere can be approximated as closely as desired in terms of a Lipschitz continuous polynomial. The formulation is applied to manifold-valued imaging problems with total variation regularization and graph-based SLAM. In imaging tasks our approach achieves small duality gaps for a moderate degree. In graph-based SLAM our approach often finds solutions which after refinement with a local method are near the ground truth solution.
This shows that in contrast to existing approaches our approach scales to large problems with millions of variables.

\printbibliography

\appendix

\section{A counterexample to tightness for the local marginal polytope relaxation}

In \cite{BauermeisterLaudeMollenhoffLiftingLagrangian} the authors prove tightness of the local marginal polytope relaxation for distance-like coupling terms by leveraging results from \cite{BachSubmodular}. We construct a simple example that shows that for any graph containing at least one loop, the local marginal polytope relaxation is not tight, even for spheres under coupling terms penalizing geodesic distance.

\begin{example}[Local marginal polytope relaxation is not tight] \label{ex:counterexample_LMP}
Consider a graph $\cG = (\cV, \cE)$ with $K$ edges, indexed by the integers $0 \ldots K-1$. Assume that $\cE$ contains all edges $(k, k+1)$ for $k = 0 \ldots K-1$. Here, for notational convenience, we have indexed the $0$-th vertex also by $K$ so as to write the edge $(K-1, K)$ rather than $(K-1, 0)$. Furthermore, consider a set of $2K$ equally spaced points distributed around the unit circle, indexed by integers mod $0 \ldots 2K-1$. These points are ordered around the circle in increasing order such that $d_{g}(y_{k}, y_{k+1}) = \tfrac{\pi}{K}$. Then $d_{g}(y_{k}, y_{k+K}) = \pi$ and $d_{g}(y_{k}, y_{k+K+1}) = \tfrac{(K-1) \pi}{K}$. Finally we state the optimization problem, where for every vertex $k$ we optimize a variable $x_{k} \in \mS{1}$:
\begin{equation}
\min_{x \in (\mS{1})^{\cV}} F_{K}(x) = \min_{x \in (\mS{1})^{\cV}} \sum_{k = 0}^{K-1} \ind_{\{y_{k}, y_{K+k}\}}(x_{k}) + \sum_{k = 0}^{K-1} d_{g}(x_{k}, x_{k+1}).
\end{equation}
Consider the pair $(x_{k}, x_{k+1})$. The $k^{\text{th}}$ data term enforces that $x_{k} = y_{k}$ or $x_{k} = y_{k+K}$, and similarly for $x_{k+1}$. Notice that $d_{g}(x_{k}, x_{k+1}) = \tfrac{1}{K}\pi$ if $(x_{k}, x_{k+1}) = (y_{k}, y_{k+1})$ or $(x_{k}, x_{k+1}) = (y_{k+K}, y_{k+K+1})$, and  $d_{g}(x_{k}, x_{k+1}) = \tfrac{K-1}{K}\pi$ otherwise. We prove that $x^{*} = (y_{0}, y_{1}, \ldots, y_{K})$ is an optimizer to this problem.\\
\\
To see this, note that there must be at least one edge $(x_{k}, x_{k+1})$ which achieves the cost $d_{g}(x_{k}, x_{k+1}) = \tfrac{K-1}{K} \pi$. Conversely, assume there exists a solution for which this is not true, and let $x_{0} = y_{l}$ (where $l = 0$ or $l=K$). Since $d_{g}(x_{0}, x_{1}) = \tfrac{1}{K}\pi$, we must have $x_{1} = y_{l+1}$. Repeating this argument for every edge, we obtain $x_{K} = y_{l+K}$, which is impossible since $x_{0} = x_{K}$, and $y_{l} \neq y_{l+K}$. Hence at least one edge achieves cost $d_{g}(x_{k}, x_{k+1}) = \tfrac{K-1}{K} \pi$, and every other edge achieves a cost $d_{g}(x_{k}, x_{k+1}) \geq \tfrac{1}{K}\pi$, which means that $F_{K}(x) \geq 2 \tfrac{K-1}{K} \pi$. This cost is achieved by $x^{*}$ (as well as $2 K$ equivalent solutions obtained by symmetry): $F(x^{*}) = 2 \tfrac{K-1}{K} \pi$.\\
\\
Now consider the local marginal polytope relaxation for this problem:
\begin{equation}
\min_{\mu \in \cM(\mS{1})^{\cV}} \cF(\mu) = \min_{\mu \in \cM(\mS{1})^{\cV}} \sum_{k = 0}^{K-1} \langle \ind_{\{y_{k}, y_{K+k}\}}, \mu_{k} \rangle + \sum_{k = 0}^{K-1} \ot_{d_{g}}(\mu_{k}, \mu_{k+1}).
\end{equation}
We can consider the solution $\mu^{*}$ with $\mu_{k}^{*} = (\tfrac{\delta_{y_{k}} + \delta_{y_{K+k}}}{2})$. This solution turns out to be optimal, but it suffices for this example that it is feasible and achieves lower cost than $\delta_{x^{*}}$. To see this, note that $\mu_{k}^{*}$ is supported only on $\{y_{k}, y_{K+k}\}$, hence $\langle \ind_{\{y_{k}, y_{K+k}\}}, \mu^{*}_{k} \rangle = 0$. Furthermore, we have
\begin{align*}
\ot_{d_{g}}(\mu^{*}_{k}, \mu^{*}_{k+1}) &= \ot_{d_{g}}(\tfrac{\delta_{y_{k}} + \delta_{y_{K+k}}}{2}, \tfrac{\delta_{y_{k+1}} + \delta_{y_{K+k+1}}}{2}) \\ 
&\leq \tfrac{1}{2} \langle d_{g}, \delta_{y_{k}} \times \delta_{y_{k+1}} + \delta_{y_{K+k}} \times \delta_{y_{K+k+1}} \rangle \\
&= \frac{d_{g}(y_{k}, y_{k+1}) + d_{g}(y_{K+k}, y_{K+k+1})}{2} = \tfrac{1}{K} \pi.
\end{align*}
Since this inequality holds for every edge, we have that the cost of the local marginal polytope relaxation is upper bounded by $K \cdot \tfrac{1}{k} \pi = \pi$ (in fact $\cF(\mu^{*}) = \pi$) which is strictly less than the minimal cost $2 \tfrac{K-1}{K} \pi$ obtained for the unlifted problem. Therefore, the local marginal polytope relaxation is not tight.
\end{example}
It is interesting to note that it is not necessary to resort to extended real-valued functions for this result to hold. With some extra effort the same result can be shown for $f_{k}(x) = 2 \pi |x_{1} (y_{k})_{2} - x_{2} (y_{k})_{1}|$, rather than explicitly constraining $x_{k}$ via the indicator functions $f_{k}(x) = \ind_{\{y_{k}, y_{-k}\}}(x)$. The proof is obtained by showing that, for any fixed $x_{k-1}$ and $x_{k+1}$, the optimal value for $x_{k}$ is either $y_{k}$ or $y_{k+K}$. The rest of the proof then follows the proof above. Finally, this example can be generalized to any arbitrary graph that contains at least one loop of size $K$, by simply setting $f_{u} = 0$ for all vertices $u$ that are not part of the loop; the local marginal polytope relaxations is only tight for trees.

\section{Proof of \cref{thm:duality_general}} \label{secsup:duality_general_proof}

\begin{proof} \label{prf:duality_general}
Define $F: \cC(\Omega)^\cV \to \exR$ and $G : (\cC(\Omega)^\cE)^2 \to \exR$ as
\begin{equation}
F(\alpha)=\sum_{u \in \cV}\sigma_{\cP(\Omega)}(\alpha_u - f_u), \qquad \text{and} \qquad G(\varphi, \psi)= \sum_{e\in \cE} \iota_{\cK_e}(\varphi_e, \psi_e).
\end{equation}
Then the problem \cref{eq:local_marginal_polytope_dual} can be rewritten as
\begin{equation*}
\inf_{(\varphi,\psi) \in (\cC(\Omega)^\cE)^2} F(A(\varphi, \psi)) + G(\varphi, \psi).
\end{equation*}
By definition we have $(\iota_{\cP(\Omega)} + \langle f_u, \cdot \rangle)^*=\sigma_{\cP(\Omega)}(\cdot - f_u)$ which is proper, convex and lsc.
In view of closedness of $\cP(\Omega)$ in the weak$^*$ topology and \cite[Lemma 1.6]{santambrogio2015optimal} we have that $\iota_{\cP(\Omega)} + \langle f_u, \cdot \rangle$ is proper, convex and lsc as well implying that 
$(\sigma_{\cP(\Omega)}(\cdot - f_u))^{*} = \iota_{\cP(\Omega)} + \langle f_u, \cdot \rangle$.
Therefore $F^*:\cM(\Omega)^\cV \to \exR$ with $F^*(\mu)=\sum_{u \in \cV} \iota_{\cP(\Omega)}(\mu) + \langle f_u, \mu_u \rangle$.
We have $G^*: (\cM(\Omega)^\cE)^2 \to \exR$ with
\begin{equation*}
G^*(\mu, \nu)=\sum_{e \in \cE} \sigma_{\cK_e}(\mu, \nu)=\sup_{(\varphi,\psi) \in (\cC(\Omega)^\cE)^2} \langle \varphi, \mu \rangle + \langle \psi, \nu \rangle - \sum_{e \in \cE}\iota_{\cK_e}(\varphi_e,\psi_e).
\end{equation*}
Then the dual problem amounts to
\begin{align}
\sup_{\mu \in \cP(\Omega)^\cV} -G^*(-A^*(\mu)) -\sum_{u \in \cV} \langle f_u, \mu_u \rangle,
\end{align}
for 
\begin{align*}
G^*(-A^*(\mu)) &= \sup_{\varphi,\psi \in \cC(\Omega)^\cE} -\langle (\varphi,\psi), A^*(\mu) \rangle - \sum_{e \in \cE}\iota_{\cK_e}(\varphi_e,\psi_e) \\
&=\sup_{\varphi,\psi \in \cC(\Omega)^\cE}-\langle A(\varphi,\psi), \mu \rangle - \sum_{e \in \cE}\iota_{\cK_e}(\varphi_e,\psi_e) \\
&=\sup_{\varphi,\psi \in \cC(\Omega)^\cE}\sum_{u \in \cV} \langle -A_u(\varphi,\psi), \mu_u \rangle - \sum_{e \in \cE}\iota_{\cK_e}(\varphi_e,\psi_e) \\
&= \sup_{\varphi,\psi \in \cC(\Omega)^\cE}\sum_{u \in \cV} \sum_{v :(u,v) \in \cE} \langle \varphi_{(u,v)},\mu_u\rangle +  \sum_{v \in \cV}\sum_{u :(u,v) \in \cE} \langle \psi_{(u,v)}, \mu_v \rangle - \sum_{e \in \cE}\iota_{\cK_e}(\varphi_e,\psi_e) \\
&=\sup_{\varphi,\psi \in \cC(\Omega)^\cE}\sum_{(u,v) \in \cE} \langle \varphi_{(u,v)},\mu_u \rangle + \langle \psi_{(u,v)}, \mu_v \rangle - \iota_{\cK_{(u,v)}}(\varphi_{(u,v)},\psi_{(u,v)}) \\
&=\sum_{(u,v) \in \cE} \sup_{\varphi,\psi \in \cC(\Omega)}\langle \varphi,\mu_u \rangle + \langle \psi, \mu_v \rangle - \iota_{\cK_{(u,v)}}(\varphi,\psi) \\
&=\sum_{(u,v) \in \cE} \mathrm{OT}_{f_{(u,v)}}(\mu_u, \mu_v),
\end{align*}
where the last equality follows from \cite[Theorem 1.42]{santambrogio2015optimal} and the fact that $\Omega$ is compact and $f_{(u,v)}$ lsc.

Next we are going to verify that $\inf_{\varphi,\psi \in \cC(\Omega)^\cE} F(A(\varphi, \psi)) + G(\varphi, \psi)$ satisfies the conditions in \cite[Theorem 1]{rockafellar1967duality}:
We have already seen that $F$ is proper, convex and lsc since it is the convex conjugate of a proper, lsc convex function.

Consider a sequence $(\varphi_e^k, \psi_e^k) \in \cK_e$ that converges uniformly to $(\varphi_e,\psi_e)$, i.e., $(\varphi_e^k, \psi_e^k) \to (\varphi_e,\psi_e)$ for $k \to \infty$. Then we have
\begin{equation*}
-\varphi_e^k(x) - \psi_e^k(y) +f_e(x,y) \geq 0 \quad \forall x,y \in \Omega \times \Omega,
\end{equation*}
for any $k \in \bN$.
Let $x,y \in \Omega \times \Omega$ arbitrary but fixed.
Adding $\varphi_e(x) + \psi_e(y)$ to both sides of the inequality this implies that
\begin{equation*}
\varphi_e(x)-\varphi_e^k(x) + \psi_e(y) - \psi_e^k(y) +f_e(x,y) \geq \varphi_e(x) + \psi_e(y).
\end{equation*}
Passing $k \to \infty$ we obtain since uniform convergence implies pointwise convergence that $\varphi_e(x)-\varphi_e^k(x) \to 0$ and $\psi_e(y) - \psi_e^k(y) \to 0$. This implies that
\begin{equation}
f_e(x,y) \geq \varphi_e(x) + \psi_e(y).
\end{equation}
Since $x,y \in \Omega$ are arbitrary this implies that $(\varphi_e, \psi_e) \in \cK_e$ and thus $\cK_e$ is closed implying that $G$ is lsc. Clearly $G$ is also convex and proper.

Note that $A$ is linear and continuous as a weighted sum of $(\varphi,\psi) \in \cC(\Omega)^\cE \times \cC(\Omega)^\cE$.
Without loss of generality assume that $f_e(x,y)\geq 0$.
Choose $\varphi_{e} = \psi_{e}\equiv 0$. Then $\varphi_e(x) + \psi_e(y) =0 \leq f_{e}(x,y)$ for all $e \in \cE$, $(x,y) \in \Omega^2$ and $G(\varphi, \psi) < \infty$. In addition we have that $A_u(\varphi,\psi) \equiv 0$.
Note that $F(0) = -\sum_{u \in \cV} \inf_{\mu \in \cP(\Omega)} \int_\Omega f_u \dd \mu = -\sum_{u \in \cV}\min_{x \in \Omega} f_u(x) < \infty$. 
Clearly, $\sigma_{\cP(\Omega)}(\cdot - f_u)$ is continuous in the sup-norm topology at $0$ for any $u \in \cV$. Since $F$ is a separable finite sum it is continuous at $0$ as well.

By \cite[Theorem 1]{rockafellar1967duality} we obtain that $\inf_{(\varphi,\psi) \in (\cC(\Omega)^\cE)^2} F(A(\varphi, \psi)) + G(\varphi, \psi)$ is stably set. Thus we can invoke \cite[Theorem 3]{rockafellar1967duality} and obtain that
\begin{equation*}
\inf_{(\varphi,\psi) \in (\cC(\Omega)^\cE)^2} F(A(\varphi, \psi)) + G(\varphi, \psi)=\sup_{\mu \in \cP(\Omega)^\cV} -G^*(-A^*(\mu)) -\sum_{u \in \cV} \langle f_u, \mu_u \rangle
\end{equation*}
and there exists $\nu\in \cP(\Omega)^\cV$ such that 
\[
\sup_{\mu \in \cP(\Omega)^\cV} -G^*(-A^*(\mu)) -\sum_{u \in \cV} \langle f_u, \mu_u \rangle=-G^*(-A^*(\nu)) -\sum_{u \in \cV} \langle f_u, \nu_u \rangle. \qedhere
\] 
\end{proof}

\section{Proof of \cref{thm:duality_existence}}
\label{secsup:duality_existence_proof}

\begin{proof} \label{prf:duality_existence}
Assume that $f_{e} : \Omega \times \Omega \to \bR$ are continuous with modulus $\omega_e$ for all $e \in \cE$.
Let $(\varphi, \psi) \in \cC(\Omega)^\cE \times \cC(\Omega)^\cE$ satisfying $(\varphi_e, \psi_e) \in \cK_e$. Consider the score 
\begin{align}
-F(A(\varphi, \psi)) &= \min_{\mu \in \cP(\Omega)^\cV} \sum_{u \in \cV}  \int_\Omega (f_u -A_u(\varphi, \psi)) \dd \mu_u \\
&=\min_{\mu \in \cP(\Omega)^\cV} \sum_{u \in \cV}  \langle f_u, \mu_u \rangle + \sum_{(u,v) \in \cE} \langle \varphi_{(u,v)},\mu_u \rangle + \langle \psi_{(u,v)}, \mu_v \rangle \label{eq:score}.
\end{align}
Since $(\varphi_e, \psi_e) \in \cK_e$ we have that $\psi_e(y) \leq f_e(x,y) - \varphi_e(x)$ for all $(x,y) \in \Omega^2$ implying that 
\begin{align}\label{eq:ineq_beta}
\psi_e(y) \leq \inf_{x \in \Omega} f_{e}(x,y) - \varphi_e(x) = (\varphi_e)^{f_e}(y),
\end{align}
and we have $\varphi_e(x) + (\varphi_e)^{f_e}(y) \leq f_e(x,y)$ for all $(x,y) \in \Omega^2$.
Similarly we obtain that
\begin{align}\label{eq:ineq_alpha}
\varphi_e(x) \leq \inf_{y \in \Omega} f_{e}(x,y) - (\varphi_e)^{f_e}(y) =  (\varphi_e)^{f_e \bar f_e}(x),
\end{align}
and
\begin{equation*}
(\varphi_e)^{f_e \bar f_e}(x) + (\varphi_e)^{f_e}(y) \leq f_e(x,y).
\end{equation*}
Since by assumption $f_e$ is continuous with modulus $\omega_e$, both $(\varphi_e)^{f_e \bar f_e}, (\varphi_e)^{f_e}$ inherit the same modulus \cite[Box 1.8]{santambrogio2015optimal} and are in particular continuous.
Overall this means that $((\varphi_e)^{f_e \bar f_e}, (\varphi_e)^{f_e}) \in \cK_e$ is still feasible.
In addition the inequalities \cref{eq:ineq_alpha} and \cref{eq:ineq_beta} imply that for every $\mu \in \cP(\Omega)$ we have that $\int_\Omega \varphi_e \dd \mu \leq \int_\Omega (\varphi_e)^{f_e \bar f_e} \dd \mu$ and $\int_\Omega \psi_e \dd \mu \leq \int_\Omega (\varphi_e)^{f_e} \dd \mu$. Since $-A_u(\varphi, \psi)$ is a conic combination of multipliers $\varphi_e$ and $\psi_e$ 
the infimum in $-F(A(\varphi, \psi))$ increases if anything when $\varphi_e$ is replaced with $(\varphi_e)^{f_e \bar f_e}$ and $\psi_e$ is replaced with $(\varphi_e)^{f_e}$.

For each $(\varphi_e, \psi_e) \in \cK_e$ we can add and subtract a constant $c_e$ such that $(\varphi_e - c_e, \psi_e+c_e)$ achieves the same score \cref{eq:score}. Choose $c_e=\min_{x \in \Omega} \varphi_e(x)$ then $\varphi_e(x) - c_e \in [0, \omega_e(\diam \Omega)]$ and $\psi_e(x)+c_e \in [\min f_e - \omega_e(\diam \Omega), \max f_e]$, i.e., $\varphi_e - c_e, \psi_e+c_e$ are both uniformly bounded.

Now consider a maximizing sequence $(\varphi^k,\psi^k)$ of \cref{eq:local_marginal_polytope_dual}. In particular this means $(\varphi_e^k,\psi_e^k) \in \cK_e$. Then $(\varphi_e^k,\psi_e^k)$ can be replaced by $((\varphi_e^k)^{f_e \bar f_e} - c_e^k, (\varphi_e^k)^{f_e}+c_e^k)$ for $c_e^k:=\min_{x \in \Omega} (\varphi_e^k)^{f_e \bar f_e}(x)$ which is still a maximizing sequence whose elements are uniformly bounded and equicontinuous.
By the Arzelà-Ascoli Theorem \cite[Theorem 11.28]{rudin1987real} there exists a uniformly converging subsequence. Since the objective (and the constraints) are upper semicontinuous the limit must be a maximizer.
\end{proof}

\section{Proof of \cref{thm:polynomial_duality_gap_vanishes}}
\label{secsup:polynomial_duality_gap_vanishes_proof}

\begin{proof} \label{prf:polynomial_duality_gap_vanishes}
First note that $\cref{eq:dual_relax_alt}-\cref{eq:dual_relax_poly} \geq 0$.
Let $(\alpha,\beta)$ be the maximizer of \cref{eq:dual_relax_alt} which exists thanks to \cref{thm:duality_existence} and the equality $\cref{eq:local_marginal_polytope_dual}=\cref{eq:dual_relax_alt}$. Let $(\gamma,\psi) \in \cC(\Omega)^\cE \times \cC(\Omega)^\cE$. 
Then we have for every $x \in \Omega$:
\begin{align*}
f_{u}(x) - A_u(\alpha, \beta)(x) &= f_{u}(x) - A_u(\gamma, \psi)(x) + A_u(\gamma,  \psi)(x) -  A_u(\alpha, \beta)(x) \\
&\leq f_{u}(x) - A_u(\gamma,  \psi)(x) + \|A_u(\gamma,  \psi) - A_u(\alpha, \beta)\|_\infty.
\end{align*}
Minimizing both sides wrt $x\in \Omega$ and summing over $u \in \cV$ this implies that
\begin{align} \label{eq:ineq_unary}
\sum_{u \in \cV} \min_{x \in \Omega} (f_{u} - A_u(\alpha, \beta))(x) - \min_{x \in \Omega} (f_{u} - A_u( \gamma,  \psi))(x) \leq \sum_{u \in \cV} \|A_u(\gamma,  \psi) - A_u(\alpha, \beta)\|_\infty.
\end{align}
Furthermore we have
\begin{align*}
 \sum_{u \in \cV}\|A_u(\gamma,  \psi) - A_u(\alpha, \beta)\|_\infty &= \sum_{u \in \cV}\max_{x \in \Omega} \left| \sum_{v :(u,v) \in \cE} (\alpha_{(u,v)} - \gamma_{(u,v)})(x) +(\beta_{(v,u)}- \psi_{(v,u)})(x) \right| \\
 &\leq \sum_{u \in \cV}\max_{x \in \Omega} \sum_{v :(u,v) \in \cE} |(\alpha_{(u,v)} - \gamma_{(u,v)})(x)| \\
 &\qquad + \sum_{v :(u,v) \in \cE} |(\beta_{(v,u)}- \psi_{(v,u)})(x)| \\
 &\leq \sum_{e \in \cE} \|\alpha_{e} - \gamma_{e}\|_{\infty} +  \|\beta_{e}- \psi_{e}\|_\infty.
\end{align*}
We also have for every $x,y \in \Omega$
\begin{align*}
 f_{e}(x,y) - \alpha_{e}(x) - \beta_{e}(y) &= f_{e}(x,y) -  \gamma_{e}(x) -  \psi_{e}(y) +  \gamma_{e}(x) - \alpha_{e}(x) +  \psi_{e}(y) - \beta_e(y) \\
&\leq f_{e}(x,y) -  \gamma_{e}(x) -  \psi_{e}(y) + \| \gamma_{e}- \alpha_{e}\|_\infty + \|  \psi_{e} - \beta_e \|_\infty.
\end{align*}
Minimizing both sides wrt $x,y\in \Omega$ and summing over $e \in \cE$ this implies that
\begin{align}
\sum_{e \in \cE} \min_{x,y \in \Omega}  &f_{e}(x,y) - \alpha_{e}(x) - \beta_{e}(y) - \min_{x,y \in \Omega} f_{e}(x,y) -  \gamma_{e}(x) -  \psi_{e}(y) \notag \\
& \qquad \leq \sum_{e \in \cE} \| \gamma_{e}- \alpha_{e}\|_\infty + \|  \psi_{e} - \beta_e \|_\infty. \label{eq:ineq_pairw}
\end{align}
Now choose $\varepsilon >0$. Since $\Omega$ is compact and $\bR[X]$ is a subalgebra of $\cC(\Omega)$ by the Stone--Weierstrass theorem we can find polynomials $\gamma_e,\psi_e \in \bR[X]$ such that $\| \gamma_{e}- \alpha_{e}\|_\infty \leq \frac\varepsilon{4|\cE|}$ and $\| \psi_{e}- \beta_{e}\|_\infty \leq \frac\varepsilon{4|\cE|}$. Let $n = \max_{e \in \cE} \max\{\deg(\gamma_{e}), \deg(\psi_{e})\}$. 
Summing \cref{eq:ineq_unary} and \cref{eq:ineq_pairw} since $(\alpha, \beta)$ is a maximizer of \cref{eq:dual_relax_alt} we have
\begin{align*}
\cref{eq:dual_relax_alt}-\cref{eq:dual_relax_poly} &\leq 2\sum_{e \in \cE} \| \gamma_{e}- \alpha_{e}\|_\infty + \|  \psi_{e} - \beta_e \|_\infty \\ &\leq 2\sum_{e \in \cE}\frac\varepsilon{2|\cE|} = \varepsilon.
\end{align*}
Thanks to \cref{thm:duality_general} we have $\cref{eq:dual_relax_alt}=\cref{eq:local_marginal_polytope_dual}=\cref{eq:local_marginal_polytope}$. Since $\cref{eq:dual_relax_poly}=\cref{eq:local_marginal_polytope_dual_discrete}$ this means that for any $\varepsilon >0$ we can find $n \in \bN$ sufficiently large so that
\[
0 \leq\cref{eq:local_marginal_polytope}- \cref{eq:local_marginal_polytope_dual_discrete} \leq \varepsilon.
\]
If, in addition, $(\cV,\cE)$ is a tree we have in light of \cref{thm:thightness_tree} that $\cref{eq:local_marginal_polytope} = \cref{eq:mrf}$.
\end{proof}

\section{Proof of \cref{thm:GeneralmetricProblemConvergenceRate}}
\label{secsup:GeneralmetricProblemConvergenceRate_Proof}

\begin{proof} \label{prf:GeneralmetricProblemConvergenceRate}
In view of the proof of the previous theorem we consider the equivalent unconstrained problem $\cref{eq:dual_relax_alt}=\cref{eq:local_marginal_polytope_dual}$. Let $(\alpha,\beta)$ be the maximizer of \cref{eq:dual_relax_alt}. As shown in the proof of the previous theorem the duality gap can be bounded as
\begin{equation*}
\cref{eq:dual_relax_alt}-\cref{eq:dual_relax_poly} \leq 2\sum_{e \in \cE} \| \gamma_{e}- \alpha_{e}\|_\infty + \|  \psi_{e} - \beta_e \|_\infty.
\end{equation*}
Thanks to \cref{thm:duality_existence} $\alpha_e, \beta_e \in \cC(\Omega)$ inherit the modulus of continuity $\omega_e$.
Now fix $n\in \bN$. Thanks to \cite[Theorem 2]{NewmanShapiroJacksonsTheorem} there exist polynomials $\rho_e$ and $\lambda_e$ with maximum degree $n$ such that $\| \rho_{e}- \alpha_{e} \|_\infty \leq C_{m} \omega_e(1/n)$ and $\|  \lambda_{e} - \beta_e \|_\infty \leq C_{m} \omega_e(1/n)$. Thus
\[
\cref{eq:dual_relax_alt}-\cref{eq:dual_relax_poly} \leq 4 C_{m} \sum_{e \in \cE} \omega_e(1/n).
\]
If, in addition, $(\cV,\cE)$ is a tree we have in light of \cref{thm:thightness_tree} that $\cref{eq:local_marginal_polytope} = \cref{eq:mrf}$.
\end{proof}

\section{Convergence of the optimal transport subproblem}

\begin{figure}[t!]
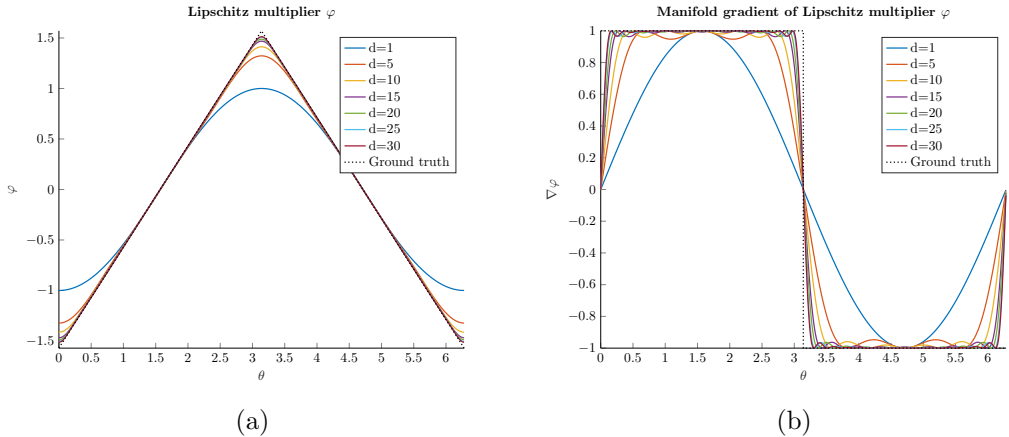

\centering
\begin{subfigure}[b]{0.45\textwidth}
\centering
\resizebox {\textwidth} {!} {
\input{figures/tikz/ALagrangeMultiplier2Vertex2}
}
\caption{}
\end{subfigure}
\begin{subfigure}[b]{0.45\textwidth}
\centering
\resizebox{\textwidth} {!} {
\input{figures/tikz/LagrangeMultiplier2Vertex2}
}
\caption{}
\end{subfigure}
\caption{Lipschitz continuous Lagrange multiplier $\varphi$ (optimal) and $\psi$ (reference), as well as their manifold gradients, for a connection between two vertices on opposing points on the unit circle for various degrees.}
\label{fig:exp_OT_approximation}
\end{figure}

\begin{figure}[t!]
\centering
\begin{subfigure}[b]{0.45\textwidth}
\resizebox {\textwidth} {!} {
%
%
\definecolor{mycolor1}{rgb}{0.00000,0.00000,0.00000}%
\definecolor{mycolor2}{rgb}{0.00000,0.00000,0.00000}%
\begin{tikzpicture}

\begin{axis}[%
width=4.521in,
height=3.566in,
at={(0.758in,0.481in)},
scale only axis,
xmin=0,
xmax=30,
xlabel style={font=\color{white!15!black}},
xlabel={n},
ymin=0,
ymax=1.5707963267949,
ylabel style={font=\color{white!15!black}},
ylabel={approximation error},
axis background/.style={fill=white},
title style={font=\bfseries},
title={approximation error},
axis x line*=bottom,
axis y line*=left,
legend style={legend cell align=left, align=left, draw=white!15!black}
]
\addplot [dotted, color=mycolor1, line width=1.0pt]
  table[row sep=crcr]{%
2	1.34096002127558\\
6	0.628612242171849\\
10	0.412266061373133\\
16	0.272478492448702\\
20	0.222363813541045\\
26	0.174340100796799\\
30	0.152416230654856\\
};
\addlegendentry{$\psi$}

\addplot [color=mycolor2, line width=1.0pt]
  table[row sep=crcr]{%
2	1.14159266113216\\
6	0.495882850394847\\
10	0.316182566881431\\
16	0.204750595917997\\
20	0.165783225942775\\
26	0.128966446192493\\
30	0.116409565011256\\
};
\addlegendentry{$\varphi$}

\end{axis}

\begin{axis}[%
width=5.833in,
height=4.375in,
at={(0in,0in)},
scale only axis,
xmin=0,
xmax=1,
ymin=0,
ymax=1,
axis line style={draw=none},
ticks=none,
axis x line*=bottom,
axis y line*=left
]
\end{axis}
\end{tikzpicture}%
}
\caption{}
\end{subfigure}
\begin{subfigure}[b]{0.45\textwidth}
\resizebox {\textwidth} {!} {
%
%

\definecolor{mycolor1}{rgb}{1.00000,0.00000,0.50000}%
\definecolor{mycolor2}{rgb}{0.88889,0.05556,0.50000}%
\definecolor{mycolor3}{rgb}{0.77778,0.11111,0.50000}%
\definecolor{mycolor4}{rgb}{0.55556,0.22222,0.50000}%
\begin{tikzpicture}

\begin{axis}[%
width=4.521in,
height=3.344in,
at={(0.758in,0.703in)},
scale only axis,
xmin=1,
xmax=8,
xtick={1,2,3,4,5,6,7,8},
xticklabels={{0.392699},{0.785398},{1.1781},{1.5708},{1.9635},{2.35619},{2.74889},{3.14159}},
xticklabel style={rotate=30},
xlabel style={font=\color{white!15!black}},
xlabel={angle ($\theta$)},
ymode=log,
ymin=1e-10,
ymax=1,
yminorticks=true,
ylabel style={font=\color{white!15!black}},
ylabel={relative gap $G_{R}$},
axis background/.style={fill=white},
title style={font=\bfseries},
title={relative duality gap},
axis x line*=bottom,
axis y line*=left,
legend style={legend cell align=left, align=left, draw=white!15!black}
]
\addplot [color=mycolor1, line width=1.0pt]
  table[row sep=crcr]{%
1	0.00641315011770382\\
2	0.0255046502926486\\
3	0.0568346870610717\\
4	0.0996836859613748\\
5	0.153072010418994\\
6	0.215786697595359\\
7	0.286414512288238\\
8	0.363380224094563\\
};
\addlegendentry{d=1}

\addplot [color=mycolor2, line width=1.0pt]
  table[row sep=crcr]{%
1	0.00641314875072842\\
2	0.0255046193084505\\
3	0.056834653484721\\
4	0.0996836803299816\\
5	0.153072006266255\\
6	0.215786685879913\\
7	0.286414510144412\\
8	0.363380218779109\\
};
\addlegendentry{d=2}

\addplot [color=mycolor3, line width=1.0pt]
  table[row sep=crcr]{%
1	4.98085607279691e-05\\
2	0.000811559766867151\\
3	0.00421854983920215\\
4	0.0137528892654821\\
5	0.0345555564499472\\
6	0.0728824769903062\\
7	0.134238777469713\\
8	0.220303209604985\\
};
\addlegendentry{d=3}

\addplot [color=teal!25!mycolor2, line width=1.0pt]
  table[row sep=crcr]{%
1	3.21441167588406e-05\\
2	0.000568078951891698\\
3	0.00328229496891978\\
4	0.0118486066930781\\
5	0.0321852165726344\\
6	0.0711349307542617\\
7	0.133679161623393\\
8	0.220303193814122\\
};
\addlegendentry{d=4}

\addplot [color=mycolor4, line width=1.0pt]
  table[row sep=crcr]{%
1	3.9470905599679e-07\\
2	2.68610531019422e-05\\
3	0.000330371086094451\\
4	0.00203381034183698\\
5	0.00851876899757317\\
6	0.0277107679837781\\
7	0.0731912416505712\\
8	0.157844413250503\\
};
\addlegendentry{d=5}

\addplot [color=teal!50!mycolor2, line width=1.0pt]
  table[row sep=crcr]{%
1	4.38721613379488e-09\\
2	7.9183735850464e-07\\
3	3.88006148071518e-05\\
4	0.000845685123691041\\
5	0.00594288068728711\\
6	0.0247822967554179\\
7	0.0719447896174981\\
8	0.157844412933204\\
};
\addlegendentry{d=6}

\addplot [color=teal!40!mycolor4, line width=1.0pt]
  table[row sep=crcr]{%
1	9.54251219230561e-09\\
2	7.10176202596449e-07\\
3	2.32352028152954e-05\\
4	0.000289939723559033\\
5	0.00214434335397098\\
6	0.0110143955583473\\
7	0.0427585242640342\\
8	0.122927516589589\\
};
\addlegendentry{d=7}

\addplot [color=teal!75!mycolor2, line width=1.0pt]
  table[row sep=crcr]{%
1	1.22630019008232e-08\\
2	3.56541608921297e-08\\
3	1.67443331487365e-06\\
4	3.66141922485897e-05\\
5	0.000555673096035944\\
6	0.00778394772557445\\
7	0.0408624554014367\\
8	0.122927520802726\\
};
\addlegendentry{d=8}

\addplot [color=teal!80!mycolor4, line width=1.0pt]
  table[row sep=crcr]{%
1	1.38283805872866e-08\\
2	6.3630787623858e-08\\
3	9.06201912447998e-07\\
4	3.24733533722598e-05\\
5	0.00050202088913442\\
6	0.00444617395963919\\
7	0.0259119043808876\\
8	0.100644043063443\\
};
\addlegendentry{d=9}

\addplot [color=teal, line width=1.0pt]
  table[row sep=crcr]{%
1	2.83958904183565e-08\\
2	1.5363864604188e-08\\
3	1.65841412901849e-07\\
4	5.58697922962331e-06\\
5	0.000113124519625105\\
6	0.00196089894369785\\
7	0.0234856781634952\\
8	0.100644086268075\\
};
\addlegendentry{d=10}

\end{axis}

\begin{axis}[%
width=5.833in,
height=4.375in,
at={(0in,0in)},
scale only axis,
xmin=0,
xmax=1,
ymin=0,
ymax=1,
axis line style={draw=none},
ticks=none,
axis x line*=bottom,
axis y line*=left
]
\end{axis}
\end{tikzpicture}%
}
\caption{}
\end{subfigure}
\caption{(a) Error between polynomial approximation of the geodesic distance by $\varphi$ (optimal) and $\psi$ (reference) and actual geodesic distance ($\pi$) for diametrically opposite points in terms of degree $n$, (b) Relative duality gap for optimal multiplier $\varphi$ for the optimal transport problem in function of degree $n$ and angle $\theta$ between $x$ and $y$.}
\label{fig:exp_OT_relG}
\end{figure}
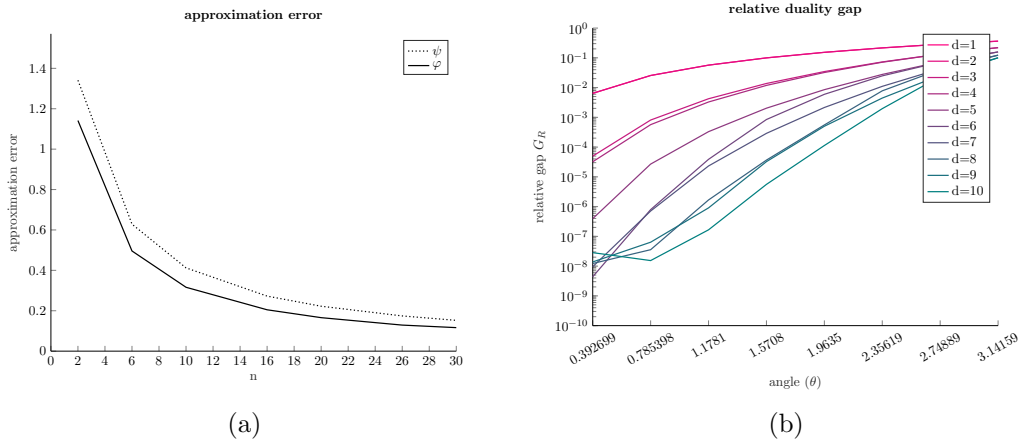

In this additional numerical experiment, we evaluate the modeling power of polynomials for solving the optimal transport problem, which is a subproblem of our formulation. Therefore we solve the problem
\begin{equation}
\begin{aligned}
\max_{\varphi \in \PEN{x}{n}} \quad & \langle \varphi, \mu_{2} - \mu_{1} \rangle \\
\text{subject to} \quad & L - \langle \xi, J^{*}(x) \nabla \varphi(x) \rangle \geq 0, \quad \forall (x, \xi) \in \Omega \times \mS{p}
\end{aligned}
\end{equation}
for $\mu_{1}, \mu_{2} \in \cM$ fixed. For this experiment we choose two diametrically opposite points $x, y$ and let $\Omega = \mS{1}$, $\mu_{1} = \delta_{x}$ and $\mu_{2} = \delta_{y}$, as this reflects a certain worst-case scenario in our relaxation. We compute and plot the polynomial multiplier $\varphi$ restricted to $\mS{1}$ in terms of the angle $\theta$, as well as its manifold gradient, in \cref{fig:exp_OT_approximation}.\\
\\
Furthermore, we display the gap between the polynomial approximations $\varphi$ in terms of the degree $n$, and the actual geodesic distance between $x$ and $y$ ($d_{g}(x, y)=\pi$). We also plot the gap for the polynomial approximations $\psi$ used in the proof of theorem \cref{thm:geodesic_approximation}. These results are found in \cref{fig:exp_OT_relG}, where for degrees $n$ ranging from $1$ to $10$ the relative error between the geodesic distance and our polynomial approximation for various angles $\theta$ between $x$ and $y$ is also shown. Whenever $x$ and $y$ are not diametrically opposite, the approximation is always notably better than what is shown in the experiment; hence the particular focus on diametrically opposite points which pose a worst case problem for $\mS{m}$ in general.

\end{document}